\renewcommand{\baselinestretch}{1.2} 
\theoremstyle{definition}
\newtheorem{theo}{Theorem}[section]
\newtheorem{lem}[theo]{Lemma}
\newtheorem{prop}[theo]{Proposition}
\newtheorem{example}[theo]{Example}
\newtheorem{algorithm}[theo]{Algorithm}
\newtheorem{assumption}[theo]{Assumption}
\numberwithin{equation}{section}
\newcommand{\lb}{\left(}
\newcommand{\rb}{\right)}
\newcommand{\E}{\mathbb{E}}
\newcommand{\PR}{\mathbb{P}}
\newcommand\T{\top}
\begin{document}
		
\title{
Multiscale Change Point Detection for\\
Functional Time Series
}

\author{
Tim Kutta\thanks{Department of Mathematics, Aarhus University, \texttt{tim.kutta@math.au.dk}; corresponding author} \and
Holger Dette\thanks{Fakultät für Mathematik, Ruhr-Universität Bochum, \texttt{holger.dette@rub.de}} \and
Shixuan Wang\thanks{Department of Mathematics and Statistics, University of Reading, \texttt{s.wang@reading.ac.uk}}
}

\maketitle

\begin{abstract}
\noindent We study the problem of detecting and localizing multiple changes in the mean parameter of a Banach space–valued time series. The goal is to construct a collection of narrow confidence intervals, each containing at least one (or exactly one) change, with globally controlled error probability. Our approach relies on a new class of weighted scan statistics, called \textit{Hölder-type statistics}, which allow a smooth trade-off between efficiency (enabling the detection of closely spaced, small changes) and robustness (against heavier tails and stronger dependence). For Gaussian noise, maximum weighting can be applied, leading to a generalization of optimality results known for scalar, independent data. Even for scalar time series, our approach is advantageous, as it accommodates broad classes of dependency structures and non-stationarity. Its primary advantage, however, lies in its applicability to functional time series, where few methods exist and established procedures impose strong restrictions on the spacing and magnitude of changes. We obtain general results by employing new Gaussian approximations for the partial sum process in Hölder spaces. As an application of our general theory, we consider the detection of distributional changes in a data panel. The finite-sample properties and applications to financial datasets further highlight the merits of our method.
\end{abstract}

\noindent {\em Keywords:}
change point detection, confidence intervals, functional data, time series

\defaultbibliographystyle{apalike}
\defaultbibliography{Literature}

\begin{bibunit}
\section{Introduction}

Throughout this paper $(X_n)_{n \in \mathbb{N}}$ denotes a  time series 
living on a separable, real Banach space $(\mathbb{B}, \| \cdot \|)$. 
We consider the problem of detecting and localizing changes in the mean parameter of $(X_n)_{n \in \mathbb{N}}$ and
study the model 
\begin{align} \label{e:mod2}
X_n = \varepsilon_n +  \sum_{k=1}^K \mu^{(k)} \cdot\,\mathbb{I}\{c_{k-1} < n \le c_k\}, \qquad n=1,...,N, 
\end{align}
where $N$  is the sample size, 
$(\varepsilon_n)_{n \in \mathbb{N}}$ is a centered error process, the change point locations are given by 
\begin{align} \label{e:mod1}
c_0 := 0<c_1<...<c_K< c_{K+1}:=N,
\end{align}
and
 $\mu^{(1)}, \ldots ,  \mu^{(K+1)} \in \mathbb{B}$ are $K+1$ 
  mean parameters  with $\mu^{(k)} \neq \mu^{(k+1)}$ for all $k$
 (the case $K=0$ corresponds to no changes). The time series of noise variables $(\varepsilon_n)_{n \in \mathbb{N}}$ is weakly dependent in an appropriate sense, as  e.g. quantified by decaying mixing coefficients or $m$-approximability. We do not, however, assume that the errors are stationary. All relevant parameters in this model may also depend on $N$ (such as the error distribution as well as $K$, $c_k$, $\mu^{(k)}$), but we usually do not make this dependency explicit in our notation. We also define the distance of the $k$th change point to its neighbors and the magnitude of that change as
\begin{align} \label{e:mod3}
\delta_k := \min\big(c_{k+1}-c_{k}, c_k-c_{k-1}\big)\qquad \textnormal{and}\qquad \Delta_k := \|\mu^{(k)}-\mu^{(k-1)}\|,
\end{align}
respectively. Our methodology is presented for general, separable Banach spaces. 
In particular, our theory includes multivariate data, as well as important spaces from Functional Data Analysis, such as the infinite-dimensional separable Hilbert space (typically expressed as the square integrable functions $L^2([0,1])$) and the space of continuous functions $\mathcal{C}([0,1])$, equipped with the supremum norm. Our main inferential aims are (in increasing order of difficulty):
\begin{itemize}
    \item[1)] To generate discrete intervals of integers $I_1, I_2,... \subset\{1,...,N\}$, where each interval includes at least one change, with some globally controlled error probability $\alpha$. 
    \item[2)] Under  general assumptions on the interplay of $\delta_k$ and $ \Delta_k$ to
guarantee that each interval contains exactly one change. 
\end{itemize} 
Evidently, smaller intervals for the localization of the changes are preferable and we will also establish a connection between lengths of the intervals, error structure and the pairs $(\delta_k, \Delta_k)$. 
\smallskip

\newpage 
\textbf{Key contributions} We  outline the most important innovations of this work.

\begin{itemize}
    \item[(i)] We introduce a novel  multi-scale  procedure for change point detection which is based  on a 
    \textit{Hölder-type scan statistic} defined by 
    \begin{align}
        \label{det1}
\gamma(n,h) := \frac{\|S_{n-h+1}^n-S_{n+1}^{n+h}\|}{N^{1/2} \rho(h/N)}, 
    \end{align}
where $  S_{i}^j := \sum_{\ell=i}^j X_\ell$ is the sum of the observations $X_i , \ldots , X_j$
and $\rho: [0,1] \to \mathbb{R}$ is an appropriate weight function 
(see eq. \eqref{polyweights} and \eqref{logweights} for some examples). 
We then use $\gamma(n,h) $ to scan for  potential change points (different $n$) in a multi-scale way (different $h$). Our method 
is  very flexible and can be used for data with strong or weak moment assumptions and for time series with weaker or stronger dependence. Moreover, it also  extends to non-stationary data.
\item[(ii)] The statistic  in \eqref{det1} is of Hölder-type in the sense that it is a continuous transform of the partial sum process w.r.t. the $\rho$-Hölder norm. The probabilistic core of our theory is thus leveraging convergence of the partial sum process in the space of  $\rho$-Hölder continuous functions. 
This approach is quite different from most existing methods that use other scalings than the Hölder-type statistic and that employ Gaussian approximations, only available for Euclidean data.
    \item[(iii)] Our theory is formulated for general, separable Banach spaces, including the most important ones of Functional Data Analysis (FDA). Related works in FDA typically assume that all changes are of the same magnitude. In contrast, our approach works under general assumptions on $K$,  $\delta_k$ and $\Delta_k$ (number, separation and magnitude of changes). If the model errors are Gaussian, our procedure works under the weakest possible assumptions on $(\delta_k,\Delta_k)$, which existed so far only for real-valued data. 
    \item[(iv)] Our theory is based on the high-level assumption that the partial sum process of errors converges in some Hölder space. We validate this high-level assumption for some concrete examples, including non-stationary time series in $\mathcal{C}([0,1])$, the space of continuous functions.
    \item[(v)] As an application of our theory, we study multiple change point detection in the distribution functions, for a panel of real-valued observations. This problem naturally allows an FDA formulation with non-stationary time series of model errors.
\end{itemize}

\textbf{Paper structure} The rest of this paper is organized as follows: The remaining section of the Introduction contains a discussion of related literature and some mathematical preliminaries for our subsequent discussion. In Section \ref{sec_2}, we present our main methodological and theoretical contributions. Section \ref{sec3} contains a mathematical application to panels of data points with changes in the distribution function. 
Section \ref{sec:sim} contains a simulation study and Section \ref{sec:data} an application of our new methodology to datasets in finance. Finally, the Appendix is dedicated to proofs and technical details.

\subsection{Related literature} 
The literature on change point analysis in general and multiple change point detection in particular (also called segmentation) is vast, and we restrict ourselves to the work which is most closely related to this paper. For helpful reviews on change point detection, we refer the reader to 
\cite{aue:horvath:2012},  \cite{jandhyala:2013},  \cite{Niuetal2016}, \cite{truongetal2020},  
\cite{cho:kirch:2024} 
 and  \cite{aue:kirch:2024}.  
 The breadth of the literature on multiple change point detection reflects the manifold applications of piecewise stationary models, such as in climate data, finance or neuroscience. 

\textbf{Segmentation for scalar and multivariate data} Borrowing terminology from \cite{cho:kirch:2024}, multiple change point problems can be addressed by either local or global methods. In local methods, small  subsamples of the data are investigated, using a method for the detection of a single change point. An example is the Binary Segmentation  (BS) method \citep{vostrikova:1981,venkatraman:1992}, where a CUSUM statistic is employed to sequentially break down the time series into smaller fragments, each containing only one change. In a seminal paper, \cite{fryzlewicz:2014} introduced a randomized version of BS, called Wild Binary Segmentation (WBS), which exhibits improved performance against closely spaced changes of small magnitude. In the notation of the present paper, it requires that $\min_k(\delta_k)\cdot \min_k(\Delta_k^2)\gg \log(T)$ for i.i.d. Gaussian errors, which is essentially optimal  \citep[see][for details]{arias-castro:candes:durand:2011,cho:kirch:2024}.  Variants of WBS have been considered in numerous works
\citep[see, for example][]{fryzlewicz:2020,cho:fryzlewicz:2024} and  BS and WBS-methods have also been investigated in the high-dimensional regime \citep[see, for example,][]{ChoFryzlewicz2015,wangyurinaldo2021,KocacsBuehlmannLiMUnk2024}. A related approach that focuses on the construction of confidence intervals for individual changes was proposed by \cite{fryzlewicz:2024}.
 A different, local approach for segmentation are moving sum (MOSUM) statistics, such as presented in \cite{eichinger:kirch:2018} or \cite{ChoKirch2021}. The problem formulation considered by these authors is related to our work, as it encompasses serially dependent data and does not require the model errors to be subgaussian. The MOSUM approach uses a scan statistic of the form \eqref{det1}, where the denominator is replaced by $h^{1/2}$. 

 In contrast to local procedures, global procedures are optimizing a 
 risk function subject to a penalization involving all possible segmentation at once. 
Different risk functions and different penalties then yield different segmentation methods. Typical examples are $\ell_0$-penalization \citep[see][for an early and a rather recent reference]{YAO1988,Romanoetal2022} and $\ell_1$-penalization \citep[see][among others]{Harchaouietal,Linetal2017}. Alternative approaches are provided by multiscale methods, minimizing the number of jumps of a piecewise constant fit of the mean function subject to a restriction defined by a multiscale statistic (see \citealp{frick:munk:sieling:2014} and \citealp{dette:eckle:vetter:2020} for an independent and the dependent case, respectively). The recent work of \cite{koehne:mies:2025} proposes a refined probabilistic analysis to obtain optimal detection rates with multiscale statistics for subgaussian and dependent scalar time series. The analysis also draws on the theory of Hölder spaces and is in this sense related to the present paper.
A detailed discussion of the various local and  global segmentation methods including an empirical comparison can be found  in the recent review paper of \cite{cho:kirch:2024}.
We point out that nearly all work on multiple change point detection focuses on (vectors of) scalar observations. There are few exceptions developing methods for functional data, which will be discussed next. 

\textbf{Change point detection in FDA} In FDA, observations are
random elements in some function space such as the square integrable functions or continuous functions on the unit interval. 
 The problem of detecting and locating changes in functional time series has been most intensively studied in models with at most one change. 
Tests for single mean changes were considered among others in 
\cite{berkes:gabrys:horvath:kokoszka:2009}, \cite{horvath:kokoszka:rice:2014},\cite{gromenko:kokoszka:reimherr:2017} and \cite{dettekokotvolgushev2020}. Structural breaks of time series in the space $\mathcal{C}([0,1])$
were studied in \cite{dette:kokot:aue}. Changes in the covariance  structure have been considered in \cite{stoehr:aston:kirch:2021},  \cite{horvath:rice:zhao:2022} and \cite{dettekokot2022}. 
Extensions of the traditional one change point model include heavy tailed data by \cite{mikosch:rackauskas:2010} and tests for epidemic changes (\cite{rackauskas:suquet:2006}, \cite{aston:kirch:2012}). The approach of \cite{rackauskas:suquet:2006} is related to this work, since it studies dyadic increments of the partial sum process, which can be understood as a Hölder-type statistic for epidemic changes. 

The FDA literature on multiple change point detection for a large number $K$ of changes  is fairly limited. The most relevant points of comparison seem to be \cite{chiou:chen:hsing:2019} and \cite{rice:zhang:2022}, where the second reference relies on Binary Segmentation. Both papers place quite restrictive assumptions on the model, requiring changes to be far apart from each other and hence also (implicitly) limiting the number of detectable changes. 
A method that is related but not directly comparable was proposed by \cite{bastian:basu:dette:2024} who deliberately search only for changes of sufficiently large magnitude. Since this paper uses Binary Segmentation and the theory of  \cite{rice:zhang:2022} as a subroutine, it imposes similar restrictions on the model.

\section{A multiscale method for change point detection} \label{sec_2}

\subsection{Notations and mathematical preliminaries} 

\textbf{$\mathbb{B}$-valued random variables} Throughout this work, $(\mathbb{B}, \| \cdot \|)$ denotes a real, separable Banach space of finite or infinite-dimension, with topological dual space $\mathbb{B}'$.   When formulating results specifically for Hilbert spaces, we use the notation $\mathbb{B}=\mathbb{H}$. For some probability space $(\Omega, \mathcal{A}, \mathbb{P})$, a random variable on $\mathbb{B}$ is a measurable map $X: (\Omega, \mathcal{A}, \mathbb{P}) \to   (\mathbb{B}, \| \cdot \| )$.  Notice that, since $\mathbb{B}$ is assumed to be separable, $X$ is automatically tight.
The expectation $ \mathbb{E}[X]=\mu \in \mathbb{B}$ is given by the Bochner integral and well-defined if $\mathbb{E}\|X\|<\infty$.
Similarly, we can define the covariance function of a random variable $X$ with arguments $ f,g \in \mathbb{B}'$  by 
\[
c^X[f,g] := \mathbb{E}\big[f(X)g(X)\big] -  \mathbb{E}\big[f(X)\big]\mathbb{E}\big[g(X)\big],
\]
which is well-defined if $\mathbb{E}\|X\|^2<\infty$. For details on Banach-valued random variables we refer to \cite{ledoux:talagrand:1991} and \cite{jankai2015}.

\textbf{$\mathbb{B}$-valued functions} Key technical tools to study the properties  of partial sum statistics are weak convergence results for sequential sum processes. To formulate such results we consider  the space of continuous $\mathbb{B}$-valued  functions
\[
\mathcal{C}([0,1], \mathbb{B}):=\{f: [0,1] \to \mathbb{B}| \,\, f \,\,\textnormal{continuous}\} , 
\]
which, equipped with the supremum-norm
\[
\|f\|_\infty:= \sup_{x \in [0,1]} \|f\|,
\]
is again a Banach space. We also consider   the closed subspace $\mathcal{C}^\rho([0,1], \mathbb{B})$ of ($\rho$-)Hölder continuous functions, where
$\rho : [0, 1] \to \mathbb{R}$  is a non-decreasing function   with $\rho(0)=0$, $\rho(x)>0$ for $x \in (0,1]$ that is right-continuous at $0$. More precisely, we define the ($\rho$-)Hölder norm as
\begin{align} \label{e:Hnorm}
\|f\|_{\rho} = \|f\|_\infty+ \sup_{0 \le x <y \le 1} \frac{\|f(y)-f(x)\|}{\rho(y-x)},   
\end{align}
and the corresponding Hölder space $\mathcal{C}^{\rho}([0,1], \mathbb{B})$ as the set  of all $\mathbb{B}$-valued functions $f \in \mathcal{C}([0,1], \mathbb{B})$  satisfying  both
\begin{align*}
\|f\|_\rho < \infty, \qquad \textnormal{and} \qquad \lim_{ \delta \downarrow 0} \sup_{0\le x <y \le x+\delta \le 1} \frac{\|f(y)-f(x)\|}{\rho(y-x)} =0.
\end{align*}
To avoid trivial spaces (consisting only of constant functions) we will only consider choices of $\rho$ where for some constant $c>0$ and all $x \in [0,1]$ we have $\rho(x) \ge cx$.
Equipped with the Hölder norm $\|\cdot\|_\rho$, the space $\mathcal{C}^{\rho}([0,1], \mathbb{B})$ is also a separable Banach space. In particular, we can consider random variables in $\mathcal{C}^{\rho}([0,1], \mathbb{B})$, such as the $\mathbb{B}$-valued Brownian motion. This is a centered Gaussian process with independent, stationary increments  \citep{kuelbs:1973}. The fact that the $\mathbb{B}$-valued Brownian motion  is also an element of the more restrictive Hölder space $\mathcal{C}^{\rho}([0,1], \mathbb{B})$ for certain choices of weight functions (in particular those used in this paper) has been proved by  \cite{rackauskas:suquet:2009}.

\textbf{Further notations} 
In this work, we use a notion of asymptotic domination of a sequence of positive numbers. Namely, for $(a_n)_{n \in \mathbb{N}}$ and  $(b_n)_{n \in \mathbb{N}}$, we say $a_N \ll  b_N$ if $a_N/b_N \to 0$ as $N \to \infty$.

\subsection{Statistical methodology}
In this section, we develop inference methods for change point detection in the mean parameter  of  a Banach space-valued time series and prove its validity in the statistical model \eqref{e:mod2}. Recall that the locations of the changes are denoted by  $c_1 < \ldots < c_K$ (in other words  $c_k$ is  the location of the $k$-th change and  $K$ is  the unknown number of change points) and define for
  formal reasons $c_0:=0$, $ c_{K+1}:=N$.  Moreover, 
  $\mu^{(k)}$ is the mean parameter between $c_{k-1}$ and $c_k$, $\delta_k=\min\big(c_{k+1}-c_{k}, c_k-c_{k-1}\big)$ the separation of the $k$th change point and its neighbors and  $\Delta_k=\|\mu^{(k)}-\mu^{(k-1)}\|$ the magnitude of the $k$th change. Note that the  model parameters $K,\mu^{(k)}, c_k, \delta_{k}, \Delta_k$ may depend on $N$.
  
\textbf{Test statistics} In the following, we consider two types of weight function $\rho$ in the definition of the  weighted scan statistics in \eqref{det1}:
\begin{itemize}
    \item[i)]  Polynomial weights 
    \begin{align}
        \label{polyweights}
    \rho^{poly, \beta}(x)= x^{\beta}, \qquad \beta \in [0,1/2).
        \end{align}
    \item[ii)] 
Weights  with a logarithmic factor 
\begin{align}\label{logweights}  
    \rho^{log, \beta}(x)= x^{1/2} \log^\beta(x^{-1}), \qquad \beta \in (1/2, \infty),
  \end{align}
  which we call {\it logarithmic weights} throughout this paper.
\end{itemize} 
Dividing by these weights puts more emphasis on small values of $x$.
In this regard, logarithmic weights are stronger than polynomial weights and in both classes, a value of $\beta$ closer to $1/2$ corresponds to stronger weighting.
Next, we recall  the definition of the partial sums $S_{i}^j := \sum_{\ell=i}^j X_\ell$ and scan statistic 
\begin{align}\label{e:gam}
 \gamma(n,h) := \frac{\|S_{n-h+1}^n-S_{n+1}^{n+h}\|}{N^{1/2} \rho(h/N)}.
\end{align}
Notice that in the simple case where we choose polynomial weights with $\beta=0$, we obtain a standardization by $1/\sqrt{N}$. Stronger weighting blows up $\gamma(n,h)$ for small values of $h$, which  corresponds to relying more strongly on local information. Heuristically, such a weighted procedure will have an easier time spotting closely spaced changes, where small values for  $h$ are used, but it will also be less robust to outliers that impact local sums $S_{n+1}^{n+h}$ more strongly.
Now, using $\gamma(n,h)$, we can define a statistical procedure to locate changes in the data sample.

\textbf{A detection algorithm} We use the test statistic $\gamma(n,h)$ to scan for changes at different locations $n$ and for different widths $h$. Therefore, we define the set of all permissible tuples 
\[
\mathbf{N}^{all}:=\{(n,h): 1 \le n-h+1<n+h\le N\}.
\]
It is appropriate to say that
\begin{align*}
     \textnormal{an element}\quad  (n, h) \in \mathbf{N}^{all} \quad \textnormal{"characterizes the  interval} \quad[n-h+1,n+h]\textnormal{"}
\end{align*}
in the sense that $\gamma(n,h)$ draws on data indexed in this interval. In this way we will sometimes identify pairs $(n,h)$ with intervals. 
On the set $\mathbf{N}^{all}$ we define an ordering 
\begin{align} \label{e:def:order}
(n,h)\preceq (n',h')  \quad :\Leftrightarrow \quad \begin{cases}
    h < h',\\
    h = h' \,\, \textnormal{and} \,\,\,\,n<n'.
\end{cases}
\end{align}
Looking at this ordering for the statistic $\gamma(n,h)$, this means that shorter sums (small $h$) will take precedence over longer ones and, somewhat arbitrarily, centers to the left (smaller $n$) take precedence over centers to the right. In Algorithm \ref{alg1} below, we will run a for-loop over $(\mathbf{N}, \preceq )$, for a subset $\mathbf{N} \subset \mathbf{N}^{all}$. By this we mean that we go through all pairs $(n,h) \in \mathbf{N}$ from smallest  to largest pair (according to $\preceq $). 
Next, we define for a set $\mathbf{N} \subset \mathbf{N}^{all}$ and for a pair $(n, h)$  the subset 
\begin{align}\label{e:def:minus}
    \mathbf{N} \circleddash (n,h) := &\{(n',h') \in \mathbf{N}: (n,h) \preceq (n',h'),\\
    & [n'-h'+1, n'+h'] \cap [n-h+1, n+h] = \emptyset\}. \nonumber 
\end{align}

The operation "$\circleddash$" thus removes all tuples that are smaller  than $(n,h)$ according to $\preceq$ and those whose defined interval overlaps with $(n,h)$.
We need one more piece of notation and define 
\begin{align}
    \label{hol1}
n^{max}_{(n,h)}:= \textnormal{argmax}\{\gamma(n',h): n-h+1<n'<n+h, (n',h) \in \mathbf{N}\}.
\end{align}
Below, we define Algorithm \ref{alg1}, that scans an index set $\mathbf{N}$ for changes. The other input parameters of the algorithm  are a critical threshold $q$ and a set of suspected changes $\mathbf{C}$, which is updated continuously and  initialized as $\mathbf{C}=\emptyset$. Notice that implicitly, the algorithm also depends on the definition of the scan statistic $\gamma$ and thus on the weight function $\rho$. In the algorithm, roughly speaking, the scan $\gamma(n,h)$ 
 proceeds along the ordering  $\preceq$. If a value of $\gamma(n,h)$ surpasses $q$, the algorithm searches the neighborhood around $n$ for the maximum value of $\gamma(n',h)$, adding the maximizing value $(n^{\max}_{(n,h)},h)$ to the set of suspected changes $\mathbf{C}$. Then, the set $\mathbf{N} $ is updated to $\mathbf{N} \circleddash (n^{\max}_{(n,h)},h)$. This means that we will not search for changes in intervals intersecting with $[n^{\max}_{(n,h)}-h+1, n^{\max}_{(n,h)}+h]$ in the future, because we have already found a change point candidate there. The algorithm then calls itself to continue the  search in the smaller set $\mathbf{N} \circleddash (n^{\max}_{(n,h)},h)$, until the set is empty. 

\begin{algorithm}
\caption{MultiScan} \label{alg1}
\begin{algorithmic}[1]
    \State \textbf{Input:} $\mathbf{C} \subset \mathbf{N}^{all}$, $\mathbf{N} \subset \mathbf{N}^{all}$, $q>0$
    \State \textbf{Output:} $\mathbf{C}$ (set characterizing intervals with changes)
    
    \If{$\mathbf{N} = \emptyset$}
        \State \textbf{Return} $\mathbf{C}$
        \State \textbf{Terminate Algorithm}
    \EndIf

    \For{$(n, h) \in \mathbf{N}$ along $\preceq$}
        \State Calculate $\gamma(n, h)$ in \eqref{e:gam}
        \If{$\gamma(n, h) > q$}
            \State Calculate $n^{\max}_{(n, h)}$ in \eqref{hol1}
            \State Update $\mathbf{C} \gets \mathbf{C} \cup \{(n^{\max}_{(n, h)},h)\}$
            \State Update $\mathbf{N} \gets \mathbf{N} \circleddash (n^{\max}_{(n,h)},h)$
            \State \textbf{Break} \Comment{Exits the entire loop}
        \EndIf
    \EndFor
    
    \State \textbf{Recursively call} MultiScan($\mathbf{C}$,$\mathbf{N}$,$q$) \Comment{Pass updated parameters}
    
\end{algorithmic}
\end{algorithm}

\noindent \textbf{Inference aims} Before we proceed to the theoretical analysis, we will state two  statistical aims for Algorithm \ref{alg1},
one weaker than the other one. Let $\alpha \in (0,1)$ be a user-determined bound for the global false detection rate and $q=q(\alpha)$ a suitably chosen threshold. Then it should hold for the outputs of MultiScan$(\emptyset,\mathbf{N},q)$ that:
\begin{itemize} 
    \item[(i)] \textit{Weak localization:} 
    \begin{align}
        \label{e:weakl}
        \liminf_{N \to \infty}  \mathbb{P}\big(~\forall (n,h) \in \mathbf{C} \,\,\exists k \in \{1,...,K\}: n-h+1 \le  c_k \le n+h~\big)\ge 1-\alpha.
    \end{align}
    \item[(ii)] \textit{Strong localization:} 
Condition \eqref{e:weakl} holds. Moreover, with  probability converging to $1$, there exists (simultaneously for all $k$) for each change $c_k$ a unique pair of output parameters $(n,h)=(n_k,h_k) \in \mathbf{C}$ such that
\begin{align} \label{e:strongl}
    n_k-h_k+1\le c_k \le n_k+h_k.
\end{align}
\end{itemize}
Weak localization means that,  with high probability  ($\ge 1-\alpha$), any of the intervals $[n-h+1, n+h]$ from the output of Algorithm \ref{alg1}
contains at least one change point.
Strong localization implies additionally that every change point $c_k$ is captured by one (unique) interval $[n-h+1, n+h]$, with probability going to $1$. Strong localization implies the more typical statistical guarantee from the literature \cite[see, for example,][]{fryzlewicz:2024,koehne:mies:2025} that
\begin{align} \label{e:frc}
\liminf_{N \to \infty}  \mathbb{P}\big(~\forall k \in \{1,...,K\} ~\exists (n,h) \in \mathbf{C} : n-h+1 \le  c_k \le n+h, |\mathbf{C}|=K\big)\ge 1-\alpha.  
\end{align}
This means that all changes are captured by distinct intervals with (asymptotic) probability $\ge 1-\alpha$. Strong localization additionally explains what happens in the remaining $\le \alpha$ proportion of cases: Here it may be the case that $|\mathbf{C}|>K$ (more intervals than changes). It is still guaranteed that all $c_k$ are captured by unique intervals -  however, there now can be some intervals that include no changes. In this sense, strong localization guarantees that asymptotically no  change point is overlooked. To obtain strong localization, rather than just \eqref{e:frc}, the order of testing multiscale hypotheses (in Algorithm \ref{alg1}) plays an important role. \\
Obviously, strong localization can only hold if the changes are not too small and sufficiently separated. These requirements are not necessary for weak localization. 

\subsection{Theoretical analysis}

\textbf{Hölder space convergence} We now show that Algorithm \ref{alg1} satisfies the weak and strong localization property.
For this purpose, we first define the (continuous) sequential sum process of model errors
\begin{align} \label{e:lin}
P_{N}(x):= N^{-1/2} \sum_{i=1}^{\lfloor x N \rfloor} \varepsilon_{i}+N^{-1/2} \big(Nx-\lfloor N x \rfloor\big) \varepsilon_{\lfloor N x \rfloor+1}, \qquad x \in [0,1].
\end{align}
In the following, we will always tacitly assume that the errors $(\varepsilon_n)_{n \in \mathbb{N}}= (\varepsilon_{n,N})_{n \in \mathbb{N}}$ form a triangular array. Recall that the other model parameters $K,\mu^{(k)}, c_k, \delta_{k}, \Delta_k$ also may depend on $N$  and we usually suppress this dependence in favor of a lighter notation.

\begin{assumption}\label{ass_1}\textnormal{
    For the weight function $\rho$ we have weak convergence
    \begin{align} \label{e:holder}
    \{P_{N}(x): x \in [0,1]\}\overset{d}{\to}\{W(x): x \in [0,1]\}
    \end{align}
    in the Hölder space $\mathcal{C}^\rho([0,1], \mathbb{B})$. Here, $W$ is a centered Gaussian process, characterized by the continuous (in all arguments) limiting (long-run) covariance function
    \begin{align}
        C^{\varepsilon}(x,y)[f,g]:= \lim_{N \to \infty}\mathbb{E}\big[f[P_{N}(x)] g[P_{N}(y)]\big], \qquad x,y \in [0,1], \quad f, g \in \mathbb{B}'.
 \label{det11}
    \end{align}
    }
\end{assumption}

\noindent Notice that 
-  in contrast to most of the literature - Assumption \ref{ass_1} (and as consequence our theory) does not require the error process  to be stationary. In fact, to the best of our knowledge,  only two references in this direction exist, namely \cite{pein:sieling:munk:2017} and \cite{koehne:mies:2025}, both of which focus on univariate time series. The former consider independent Gaussian data  with a time varying variance, while the latter allow weak time dependence for subgaussian data. 
Our theory presented below is developed for general moment and dependence assumptions and covers non-stationary  time series in function spaces (see Theorem \ref{propmixing}). A detailed example of non-stationary data is discussed in Section \ref{sec3}, where we also validate Assumption  \ref{ass_1}.

In the case where the noise variables $(\varepsilon_n)_{n \in \mathbb{N}}$ form a stationary time series, the weak convergence in Assumption \ref{ass_1} is called a \textit{Hölderian invariance principle} and the limiting distribution $W$ is a Brownian motion on the Banach space $\mathbb{B}$. Hölderian invariance principles have been studied in the literature, first for independent, real valued data (\cite{lamperti:1962}) and subsequently for dependent time series by (\cite{hamadouche:2000} and \cite{giraudo:2017}). An interesting feature of such results is that they can be shown for general dependence concepts and on fairly general spaces. We give an example relevant for FDA, where often data are modeled as elements of the separable Hilbert space of square integrable functions $\mathbb{H}=L^2([0,1])$. A more comprehensive overview of convergence on Hölder spaces is given in Section \ref{sec23}

\begin{example}\label{ex1}
\textnormal{ Consider a sequence of i.i.d. random variables $(\psi_n)_{n \in \mathbb{Z}}$ in a separable Hilbert space $\mathbb{H}$ and a sequence of continuous linear operators $(A_i)_{i \in \mathbb{Z}}$ acting on $\mathbb{H}$. We impose that $\sum_i A_i\neq 0$ and that $\sum_i \|A_i\|_\mathcal{L}<\infty$, where $\|\cdot\|_\mathcal{L}$ denotes the operator norm. We then define the error process $\varepsilon_n := \sum_{i} A_i \psi_{n-i}$. Furthermore, suppose that one of the following conditions holds
\begin{itemize}
    \item[i)] Suppose that $\rho(x)=x^\beta$ is the polynomial weight function and that
    \[
    \lim_{t \to \infty} t^{(1/2-\beta)^{-1}}\mathbb{P}(\|\psi_1\|>t)=0.
    \]
    \item[ii)] Suppose that $\rho(x)=x^{1/2}\log^\beta(x^{-1})$ is the logarithmic weight function and that
    \[
    \mathbb{E}\exp\big(t\|\psi\|^{1/\beta}\big)<\infty, \qquad \forall t>0.
    \]
\end{itemize}
    Then, according to \cite{rackauskas:suquet:2009},  the weak convergence \eqref{e:holder} holds in the space 
     $\mathcal{C}^\rho([0,1], \mathbb{H})$
 and the limit $W$ is an $\mathbb{H}$-valued Brownian motion. We also notice that the respective decay assumptions in the polynomial and logarithmic case (i) and ii) respectively) are sharp, implying that stronger weighting (say logarithmic instead of polynomial) requires stronger moment assumptions.}
\end{example}

As noted above, Assumption \ref{ass_1} can be shown for  non-stationary time series of errors. However, in practice, the user has to estimate the (long-run) 
covariance function \eqref{det11} of the errors. In the independent case, this is possible using first order differences of the data. In the case of dependent but stationary data,  one can use blocking techniques, as proposed e.g. in \cite{wu:zhao:2007}, Section 5 (see also \cite{dette:eckle:vetter:2020}, Proposition 1). For non-stationary and dependent noise, estimation is also possible, but requires larger data samples and is therefore often less practical than either proposing a model with stationary or independent errors.
 
\noindent \textbf{Limiting distribution and quantiles} 
The convergence behavior of the MultiScan algorithm depends on the input set $\mathbf{N} \subset \mathbf{N}^{all}$. For detection purposes taking the entire set $\mathbf{N}^{all}$ as input is ideal, as larger subsets are associated with  more powerful detection, but this also entails a computationally expensive procedure with $\mathcal{O}(N^2)$ operations. It can thus be more practicable to use a thinned out version of $\mathbf{N}^{all}$, and an obvious choice is the following pyramid structure for some $\theta>1$
\begin{align*}
    \mathbf{N}_N^\theta= \{(n,h) \in \mathbf{N}^{all}: \exists m \in \mathbb{N} \,\, s.t.\,\, h=\lfloor \theta^m\rfloor \}.
\end{align*}
Evidently $|\mathbf{N}_N^\theta| = \mathcal{O}(N \log(N))$ yielding a computationally less costly procedure, which as we will see below maintains high accuracy. To describe the limiting distribution of our statistic, it is useful to consider the 
symmetric triangle with  the base on the unit interval   
\begin{align} \label{e:def:tri}
\blacktriangle_{0,1}:=  \big\{(x,y): x \in [0,1], y \in (0,\min\{x, (1-x)\}]\big\}.
\end{align}
Therewith we define the 
random variable 
\begin{align} \label{e:defLrho}
L^\rho:= \sup_{(x,y) \in \blacktriangle_{0,1}} \frac{\|W(x+y)-2W(x)+W(x-y)\|}{\rho(y)},
\end{align}
where $W$ is the centered Gaussian process from Assumption \ref{ass_1}, and we denote  
the upper $\alpha$-quantile of its distribution  by
$q_{1-\alpha}$. 

\textbf{Weak localization} The following result shows that weak localization 
in the sense of \eqref{e:weakl}
is obtained by Algorithm \ref{alg1} if Assumption \ref{ass_1} holds. Throughout  this section, we will assume that the index set $\mathbf{N}_N$ is for all $N$ of the form 
\begin{align} \label{e:setchoice}
\qquad \textnormal{either} \qquad\mathbf{N}_N =  \mathbf{N}^{all}, \qquad \textnormal{or} \qquad \mathbf{N}_N =  \mathbf{N}_N^\theta .
\end{align}
However, from the proofs in the appendix  it  will be  clear that for other choices of index sets related results hold.

\begin{theo} \label{theo1}
 {\it    Suppose that Assumption \ref{ass_1} is satisfied  and let  $\mathbf{N}_N $ be either  $\mathbf{N}^{all}$ or $  \mathbf{N}_N^\theta$. Then, for any $\alpha \in (0,1)$, the output of MultiScan$(\emptyset, \mathbf{N}_N, q_{1-\alpha})$ satisfies the weak localization property \eqref{e:weakl}. Furthermore, if $K=0$, then \eqref{e:weakl} 
    holds with 
 "$\liminf_{N \to \infty} $"  replaced by the ordinary "$\lim_{N \to \infty}$" and the inequality "$\ge 1-\alpha$"  replaced by the equality "$=1-\alpha$".}
\end{theo}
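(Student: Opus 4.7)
The proof reduces to a single weak-convergence statement for a continuous functional of the partial-sum process. I would introduce the map
\begin{equation*}
\Phi(f) := \sup_{(x,y) \in \blacktriangle_{0,1}} \frac{\|f(x+y) - 2f(x) + f(x-y)\|}{\rho(y)}, \qquad f \in \mathcal{C}^\rho([0,1],\mathbb{B}),
\end{equation*}
so that $\Phi(W) = L^\rho$ by \eqref{e:defLrho}. The crucial observation is that whenever the interval $[n-h+1, n+h]$ contains no change point, the mean contributions in $S_{n-h+1}^n - S_{n+1}^{n+h}$ cancel; substituting $P_N(n/N) = N^{-1/2}\sum_{i=1}^n \varepsilon_i$ yields the exact identity
\begin{equation*}
\gamma(n,h) = \frac{\|P_N(n/N + h/N) - 2 P_N(n/N) + P_N(n/N - h/N)\|}{\rho(h/N)} \le \Phi(P_N).
\end{equation*}
A short triangle-inequality computation using $\|f(a)-f(b)\| \le \|f\|_\rho \rho(|a-b|)$ gives $|\Phi(f) - \Phi(g)| \le 2\|f-g\|_\rho$, so $\Phi$ is Lipschitz on $\mathcal{C}^\rho$. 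Assumption~\ref{ass_1} and the continuous mapping theorem then yield $\Phi(P_N) \overset{d}{\to} L^\rho$.

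\textbf{Dominating the bad event.} Write $B_N := \{\exists (n,h) \in \mathbf{C}: \text{no change lies in } [n-h+1, n+h]\}$. Every element added to $\mathbf{C}$ inherits $\gamma(n,h) > q_{1-\alpha}$: the triggering iterate $(n',h)$ satisfies $\gamma(n',h) > q_{1-\alpha}$, and by \eqref{hol1} the added pair $(n^{\max}_{(n',h)}, h)$ maximises $\gamma(\cdot, h)$ over a range containing $n'$, giving $\gamma(n^{\max}_{(n',h)}, h) \ge \gamma(n',h) > q_{1-\alpha}$ (the degenerate scale $h=1$, where this range is empty, needs a minor convention). Combined with the identity above, $B_N \subseteq \{\Phi(P_N) > q_{1-\alpha}\}$, whence
\begin{equation*}
\limsup_{N \to \infty} \mathbb{P}(B_N) \le \mathbb{P}(L^\rho > q_{1-\alpha}) = \alpha,
\end{equation*}
using continuity of the law of $L^\rho$ at $q_{1-\alpha}$ (standard for suprema of Gaussian processes). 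The desired weak localization \eqref{e:weakl} follows upon taking complements.

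\textbf{Exactness when $K=0$ and main obstacle.} In the no-change regime every interval is automatically free of change points, so $B_N$ collapses to $\{\max_{(n,h) \in \mathbf{N}_N} \gamma(n,h) > q_{1-\alpha}\}$ and a matching lower bound is needed. For $\mathbf{N}_N = \mathbf{N}^{all}$ the rationals $(n/N, h/N)$ become dense in $\blacktriangle_{0,1}$, and uniform continuity of $(x,y) \mapsto \|P_N(x+y) - 2P_N(x) + P_N(x-y)\|/\rho(y)$ on the interior (with the boundary $y \downarrow 0$ controlled by the Hölder subspace condition) shows that the discrete maximum coincides with $\Phi(P_N)$ up to a vanishing error, yielding equality; the $\mathbf{N}_N^\theta$ case follows analogously by using the Hölder modulus to transfer values across adjacent scales of the geometric grid. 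The main obstacle in the whole argument is precisely this boundary behaviour: both the numerator and the denominator in $\Phi$ vanish as $y \downarrow 0$, so the continuous-mapping step and the density argument for the $K=0$ exactness both rely on the defining property $\lim_{\delta \downarrow 0} \sup_{0 \le x < y \le x+\delta} \|f(y)-f(x)\|/\rho(y-x) = 0$ of the subspace $\mathcal{C}^\rho$ — which is exactly why Assumption~\ref{ass_1} is formulated as convergence in $\mathcal{C}^\rho$ rather than in the ambient space of functions with merely bounded Hölder seminorm.
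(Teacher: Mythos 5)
Your proposal is correct and takes essentially the same route as the paper: you rewrite the scan statistic on change-free intervals as a second difference of $P_N$, dominate the false-detection event by $\{\Phi(P_N)>q_{1-\alpha}\}$, and obtain exactness for $K=0$ from asymptotic density of $\mathbf{N}_N/N$ in $\blacktriangle_{0,1}$ together with the vanishing-modulus property of the subspace $\mathcal{C}^\rho$ — the paper implements the identical skeleton via Skorohod's representation (the a.s. bound $\max_{(n,h)}|\gamma_0-\tilde\gamma_0|\le 2\|P_N-W\|_\rho$ and Lemma \ref{lem:conv}, which transfers the discrete maximum to the fixed limit path $W$ rather than to the random $\Phi(P_N)$ as you do), and your observation that the added pair $(n^{\max}_{(n,h)},h)$ inherits $\gamma>q_{1-\alpha}$ matches the paper's implication \eqref{e:Sinq}. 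The one step you dismiss as ``standard'' — continuity of the distribution function of $L^\rho$ — is a genuine lemma in the Banach-space setting (the paper's Lemma \ref{lem:cont}): one must first represent the norm as a countable supremum of real Gaussian variables via Hahn--Banach over a dense subset of $\mathbb{B}$, note non-degeneracy from Assumption \ref{ass_1}, and then invoke the Gaussian anti-concentration result of Giessing, so that reduction should be made explicit rather than left implicit.
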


\textbf{Strong localization} In order to verify strong localization properties we need to impose additional assumptions on the separation $\delta_k$ and magnitude $\Delta_k$ of the changes, which are defined in \eqref{e:mod3}.
\begin{assumption} The mean parameters $\mu^{(k)} \in \mathbb{B}$ are uniformly (with respect to $k$ and $N$) bounded. Moreover, one of the following conditions holds. \label{ass_2}
    \begin{itemize}
        \item[i)] If $\rho$ is the polynomial weight function \eqref{polyweights}, then  it holds that
        \begin{align} \label{e:ass_2_pol}
        \min_{k=1,...,K} \delta_k^{1-\beta}\Delta_k\gg N^{1/2-\beta}.
         \end{align}
        \item[ii)] If  $\rho$ is the logarithmic weight function \eqref{logweights}, then  it holds that
         \begin{align}\label{e:ass_2_log}
        \min_{k=1,...,K}\delta_k^{1/2}\Delta_k\gg \log^\beta(N).
         \end{align}
        
    \end{itemize}
\end{assumption}

Assumption \ref{ass_2} implies an interplay between the weight function $\rho$ and the conditions on $\delta_k$ and $ \Delta_k$. 
 We can see that stronger weighting (say logarithmic instead of polynomial weights) allows $\delta_k^{1/2}\Delta_k$ to be smaller. Importantly, this implies a trade-off between the tails of the noise and the signal strength, because stronger weighting is only possible if the noise satisfies the $\rho$-Hölder space convergence in Assumption \ref{ass_1}, which in turn requires higher moments for stronger weighting (see the discussion in Example \ref{ex1}). We also notice that in the case of logarithmic weights with $\beta$ close to $1/2$ the requirement in \eqref{e:ass_2_log} is close to the minimax condition  for i.i.d. one-dimensional Gaussian data that we discuss further below.

\begin{theo}\label{theo2}
{\it  Suppose that Assumptions \ref{ass_1} and \ref{ass_2} are satisfied and let  $\mathbf{N}_N $ be either  $\mathbf{N}^{all}$ or $  \mathbf{N}_N^\theta$. Then, the output of MultiScan$(\emptyset, \mathbf{N}_N, q_{1-\alpha})$ satisfies the strong localization property \eqref{e:strongl}. More precisely, with  probability converging to $1$, there exists (simultaneously) for each change $c_k$ a unique pair of output parameters $(n,h)=(n_k,h_k) \in \mathbf{C}$ such that
\[
n_k-h_k+1\le c_k \le n_k+h_k
\]
where

\[
h_k \le C_0 \cdot 
\begin{cases}
    \bigg(\frac{N^{1/2-\beta}}{\Delta_k}\bigg)^{\frac{1}{1-\beta}}, \,\,\, \textnormal{for}\,\, \rho(x)=x^\beta \\
    \frac{\log^{2\beta}(N)}{\Delta_k^2}, \,\,\, \,\, \,\, \, \qquad \textnormal{for}\,\, \rho(x)=x^{1/2}\log^\beta(x^{-1}). \\
\end{cases}
\]
Here $C_0>0$ is some fixed constant depending on the covariance structure of the process  $W$ in Assumption \ref{ass_1}.
}
\end{theo}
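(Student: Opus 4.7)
The plan is to compare $\gamma(n,h)$ with its deterministic signal counterpart $\gamma^{sig}(n,h)$, obtained by replacing every $X_\ell$ by its mean, and denote by $\gamma^{noise}(n,h)$ the same statistic applied to the error process $(\varepsilon_n)$. The triangle inequality in $\mathbb{B}$ gives $|\gamma(n,h)-\gamma^{sig}(n,h)|\le \gamma^{noise}(n,h)$, and Assumption \ref{ass_1}, combined with the continuous mapping theorem applied to the Hölder-norm functional that defines $L^\rho$ in \eqref{e:defLrho}, implies that $\sup_{(n,h)\in \mathbf{N}^{all}}\gamma^{noise}(n,h)$ converges in distribution to a tight limit. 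In particular, for any fixed $\eta>0$ there is a constant $M=M(\eta,\alpha)$ with $\mathbb{P}(\sup_{(n,h)}\gamma^{noise}(n,h)\le M)\ge 1-\eta$ for all large $N$. I work on the intersection $\mathcal E$ of this event with the weak-localization event from Theorem \ref{theo1}.

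For each change $c_k$ I choose a target scale $h_k^\ast$ as the smallest admissible scale in $\mathbf{N}_N$ exceeding $C_0 H_k$, where $H_k$ is the width displayed in the statement and $C_0$ a constant to be fixed. Assumption \ref{ass_2} translates precisely into $h_k^\ast\ll\delta_k$, so the window $[c_k-h_k^\ast+1,c_k+h_k^\ast]$ contains only the change $c_k$, and the signal therefore evaluates explicitly to
\[
\gamma^{sig}(c_k,h_k^\ast)=\frac{h_k^\ast\Delta_k}{N^{1/2}\rho(h_k^\ast/N)},
\]
which equals $(h_k^\ast)^{1-\beta}\Delta_k N^{\beta-1/2}\ge C_0^{1-\beta}$ in the polynomial case and is of order $\sqrt{C_0}$ (up to constants) in the logarithmic case. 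Taking $C_0$ sufficiently large yields $\gamma^{sig}(c_k,h_k^\ast)\ge q_{1-\alpha}+M+1$, and therefore $\gamma(c_k,h_k^\ast)>q_{1-\alpha}$ on $\mathcal E$. For the pyramid choice $\mathbf{N}_N^\theta$, rounding to the next power of $\theta$ only enlarges $h_k^\ast$ by a factor $\theta$, which is absorbed into the constant.

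Next I trace MultiScan along $\preceq$. Uniqueness of capture is automatic: the $\circleddash$ operation guarantees that all intervals in the output $\mathbf{C}$ are pairwise disjoint, so at most one of them can contain $c_k$. For existence, when the algorithm reaches $(c_k,h_k^\ast)$ either (i) this tuple is still in the running set, in which case $\gamma>q_{1-\alpha}$ triggers a detection and the maximizer $n_k^{\max}\in(c_k-h_k^\ast+1,c_k+h_k^\ast-1)$ satisfies $c_k\in[n_k^{\max}-h_k^\ast+1,n_k^{\max}+h_k^\ast]$ by a direct calculation; or (ii) $(c_k,h_k^\ast)$ has been pruned by an earlier detection $(a,h')$ with $h'\le h_k^\ast$ whose reported window overlaps $[c_k-h_k^\ast+1,c_k+h_k^\ast]$. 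In case (ii), the weak-localization part of $\mathcal E$ ensures that $[a-h'+1,a+h']$ contains some change $c_{k''}$; the overlap condition yields $|c_{k''}-c_k|\le 2h'+h_k^\ast<3 h_k^\ast$, and Assumption \ref{ass_2} ($\delta_k\gg h_k^\ast$) forces $c_{k''}=c_k$. In both cases $c_k$ is captured at scale $h_k\le h_k^\ast\le \text{const}\cdot H_k$, which gives the stated width bound.

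The main obstacle is case (ii): one must verify that the $\preceq$/$\circleddash$ dynamics cannot prune $(c_k,h_k^\ast)$ via a detection whose reported window entirely misses $c_k$. This is exactly where the separation estimate $h_k^\ast\ll\delta_k$ derived from Assumption \ref{ass_2} is essential — it rules out that a genuine, nearby change $c_{k'}\ne c_k$ could act as the actual target of the early detection. A subsidiary technical step, common to both weight families, is the joint tightness of $\sup_{(n,h)}\gamma^{noise}(n,h)$, which reduces via a discretization and continuous-mapping argument to the Hölderian invariance principle encoded in Assumption \ref{ass_1} and mirrors the analysis already carried out for Theorem \ref{theo1}.
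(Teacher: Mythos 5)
There is a genuine gap in your case (ii), and it concerns exactly the point that distinguishes strong from weak localization. You work on the intersection of the noise-bound event with the weak-localization event from Theorem \ref{theo1}, and you invoke weak localization to ensure that any earlier detection $(a,h')$ that prunes $(c_k,h_k^\ast)$ has a window containing some change $c_{k''}$, which you then identify with $c_k$. But the weak-localization event has asymptotic probability only $\ge 1-\alpha$, not tending to $1$: with probability up to $\alpha$ a spurious detection occurs whose window contains \emph{no} change at all, and nothing in your argument prevents such a window from overlapping $[c_k-h_k^\ast+1, c_k+h_k^\ast]$ and deleting the tuple you need. Consequently your proof establishes capture of all changes with probability $\ge 1-\alpha-\eta$, i.e.\ essentially the weaker guarantee \eqref{e:frc}, whereas the theorem asserts unique capture with probability converging to $1$ -- and the paper explicitly stresses (in the discussion after \eqref{e:frc}) that even in the $\le\alpha$ proportion of runs with spurious detections, all changes must still be captured. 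The paper's proof closes this hole with an idea absent from your proposal: by the Hölder-space convergence $P_N \to W$ (after Skorohod), the pure-noise statistic $\gamma_0$ vanishes \emph{uniformly over all small scales}, $\max_{(n,h):\, h\le Nu_N}\gamma_0(n,h)\to 0$ almost surely for any null sequence $u_N$, chosen so that $\max_k r_{N,k} \ll Nu_N$ (possible under Assumption \ref{ass_2}). Hence, with probability tending to $1$, \emph{no} false detection whatsoever occurs at scales $h\le Nu_N$; since the true changes are all detected at scales $H_k\le Nu_N$ and the ordering $\preceq$ processes small $h$ first, every true change is detected before any spurious (necessarily large-scale) detection can prune anything. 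Your tightness bound $\sup_{(n,h)}\gamma^{noise}\le M(\eta)$ over \emph{all} scales is too coarse to deliver this: it does not distinguish small from large scales, which is where the whole mechanism lives.

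A secondary, related defect: because your signal threshold is $q_{1-\alpha}+M(\eta)+1$, your constant $C_0=C_0(\eta)$ in the width bound blows up as $\eta\downarrow 0$, so you cannot simultaneously achieve probability $\to 1$ and a fixed constant, as the theorem requires. In the paper's argument the small-scale noise is $o(1)$ (not merely tight), so on the high-probability event one has $\gamma_0(n,h)\le q_{1-\alpha}$ for all $h\le Nu_N$, and a signal of size $\tfrac{5}{2}q_{1-\alpha}$ suffices; this yields the fixed constant $C_0$ depending only on $q_{1-\alpha}$, i.e.\ on the covariance structure of $W$. The remaining ingredients of your proposal -- the signal computation $\gamma^{sig}(c_k,h_k^\ast)=h_k^\ast\Delta_k/(N^{1/2}\rho(h_k^\ast/N))$, the verification $h_k^\ast\ll\delta_k$ from Assumption \ref{ass_2}, the disjointness of output intervals under $\circleddash$, and the induction along $\preceq$ -- do match Steps II and III of the paper's proof, but without the small-scale/large-scale separation of Step I the argument cannot reach the stated conclusion.
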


   Theorem \ref{theo2} marks a significant progress for multiple change point detection, especially for functional data. Even for i.i.d. noise, no comparable  result exists that implies a general trade-off between the spacings $\delta_k$ and the magnitudes of the changes  $\Delta_k$. Consider, in contrast, the existing guarantees afforded by  \cite{chiou:chen:hsing:2019} and \cite{rice:zhang:2022}: The former assumes that $\min_k \delta_k \ge \epsilon N$ for some fixed $\epsilon>0$, while in the latter requires the slightly weaker condition of  $\min_k \delta_k\gg  N^{7/8}$. Neither method can give any formal guarantees for the detection of genuinely narrowly spaced changes. Their assumptions also strictly limit the total number of detectable changes in the data sample. In contrast to these works,  Theorem \ref{theo2} allows for narrow spacings that are growing only logarithmically in $N$.

\textbf{Optimality for Gaussian data} 
Theorems \ref{theo1} and \ref{theo2} hold for general  Banach space-valued time series. It is of interest to relate these to  the problem of optimal rates, which are generally derived for i.i.d. Gaussian noise. As pointed out by \cite{cho:kirch:2024}, there do not seem to be comparable optimality results for data with heavier tails or for dependent data. The conditions necessary for change point inference are called by these authors "separation rates" and are distinguished from "localization rates" needed for estimation. 
In the one-dimensional Gaussian case, the minimax separation condition is known to be as $K=K_N \to \infty$
\begin{align*} 
\min_{k=1,...,K}\delta_k^{1/2}\Delta_k\gg \log^{1/2}(K_N).
\end{align*}
The right side can be replaced by  $\log^{1/2}(N)$ if $K_N \sim N^{r}$ for some $r >0$. 
We can show that MultiScan$(\emptyset, \mathbf{N}_N, q_{1-\alpha})$
achieves the resulting optimal results (usually derived under the assumption of univariate Gaussian data) even for  Banach space-valued  time series. For this purpose, we use the logarithmic weight function with $\beta=1/2$, i.e. 
\[
\rho(x) = \rho^{\log, 1/2}(x) = x^{1/2}\log^{1/2}(x^{-1}),
\]
for the remainder of this section 
and assume the noise to be increments of some Gaussian process. More precisely, we impose:
\begin{assumption} There exists a Banach-valued Gaussian process $W$ such that:
\label{ass_5}
    \begin{itemize}
        \item[i)] The model noise satisfies
            \begin{align*}
    \varepsilon_n := \sqrt{N}[W(n/N)-W((n-1)/N)].
     \end{align*}
     \item[ii)] There exists a constant $C_W<\infty$ such that almost surely
        \begin{align} \label{e:WCadd}
        \limsup_{ \delta \downarrow 0} \sup_{0\le x <y \le x+\delta \le 1} \frac{\|W(y)-W(x)\|}{\rho(y-x)}\le C_W. 
        \end{align}
    \end{itemize}
\end{assumption}

The case of i.i.d. Gaussian noise is covered when the process is a Banach-valued Brownian motion, but the statement remains valid for more general error processes. 
For the Banach-valued Brownian motion, the result follows by a generalization of Lévy's modulus of continuity theorem  \citep[see][]{acosta:1985}. It is important to remark that while the Brownian motion satisfies Assumption \ref{ass_5}, part ii), it does not live on the Hölder space $\mathcal{C}^{\rho}([0,1], \mathbb{B})$ (in which case $C_W=0$ had to be true) and hence different proof techniques need to be used. 
For the rest of this section, we will let $\alpha \in (0,1)$ be a nominal level and as before $q_{1-\alpha}$ be the upper $\alpha$-quantile of $L^\rho$. We additionally assume that $q_{1-\alpha}>C_W$ to exclude some pathological cases. 

\begin{prop} \label{prop1}
 {\it    Suppose that  
    \begin{align*} 
\min_{k=1,...,K}\delta_k^{1/2}\Delta_k\gg \log^{1/2}(N)
\end{align*}
    and that Assumption \ref{ass_5} holds. Then, it follows that:
    \begin{itemize}
        \item[i)] The output of MultiScan$(\emptyset, \mathbf{N}_N, q_{1-\alpha})$ satisfies the weak localization property \eqref{e:weakl}, and exact equality in \eqref{e:weakl} holds if no change occurs.
        \item[ii)] The strong localization property 
        holds.
         More precisely,  with
probability converging to $1$, there exists (simultaneously)  for each change point location $c_k$  
         a unique pair of output parameters $(n,h)=(n_k,h_k) $  in the output of MultiScan$(\emptyset, \mathbf{N}_N, q_{1-\alpha})$ such that
\begin{align*}
n_k-h_k+1\le c_k \le n_k+h_k\quad \textnormal{and it holds that}\quad h_k \le C_0 \cdot 
\frac{\log^{}(N)}{\Delta_k^2}.
\end{align*}
Here $C_0>0$ is a  fixed constant depending on the covariance structure of the Gaussian process  $W$.
    \end{itemize}
 }   
\end{prop}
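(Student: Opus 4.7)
My plan is to exploit the exact telescoping afforded by Assumption \ref{ass_5}, which lets us express the scan statistic in closed form as a second-order increment of the Gaussian process $W$ plus a deterministic drift. Since $\varepsilon_n=\sqrt{N}[W(n/N)-W((n-1)/N)]$, the noise portion of any partial sum $S_a^b$ collapses to $\sqrt{N}[W(b/N)-W((a-1)/N)]$, and therefore, writing $x=n/N$ and $y=h/N$,
\begin{align*}
\gamma(n,h) \;=\; \frac{\bigl\|[W(x+y)-2W(x)+W(x-y)] - N^{-1/2}D(n,h)\bigr\|}{\rho(y)},
\end{align*}
where $D(n,h)$ is the deterministic drift coming from the mean shifts and vanishes whenever $[n-h+1,n+h]$ contains no change point. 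Thus on change-free windows, $\gamma(n,h)$ is merely a lattice evaluation of the functional in \eqref{e:defLrho} and is bounded pathwise by $L^\rho$.

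For part (i), this representation reduces weak localization to the event $\{L^\rho\le q_{1-\alpha}\}$. Whenever Algorithm \ref{alg1} adds a pair to $\mathbf{C}$, the associated $\gamma$-value exceeds $q_{1-\alpha}$, which on the good event forces $D\neq 0$ and hence the corresponding interval contains a change. For the $K=0$ case, $\{\mathbf{C}=\emptyset\}=\{\max_{(n,h)\in\mathbf{N}_N}\gamma(n,h)\le q_{1-\alpha}\}$, so the equality claim reduces to showing that the discrete maximum of the normalized second-order increments tends in probability (indeed almost surely) to $L^\rho$. I would establish this by a standard approximation argument: path-continuity of $W$ yields, for any interior $(x^*,y^*)\in\blacktriangle_{0,1}$, a lattice sequence $(n/N,h/N)\to(x^*,y^*)$ along which the functional converges; the only danger is the boundary $y\downarrow 0$ (no lattice points below $y=1/N$), but Assumption \ref{ass_5}(ii) bounds that contribution by $2C_W<q_{1-\alpha}$.

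For part (ii), I would adapt the proof blueprint of Theorem \ref{theo2} to this borderline case. For each change $c_k$, pick a width $h$ of order $\log(N)/\Delta_k^2$ (rounded into $\mathbf{N}_N$) and a centering $n$ placing $c_k$ near the midpoint of $[n-h+1,n+h]$. The separation hypothesis $\delta_k^{1/2}\Delta_k\gg\log^{1/2}(N)$ forces $h\ll \delta_k$, so the window is isolated and $\|D(n,h)\|\asymp h\Delta_k$, giving a signal-to-scale ratio of order $h^{1/2}\Delta_k/\log^{1/2}(N/h)$ that can be made arbitrarily large by choice of the implicit constant. On the event $\{L^\rho\le q_{1-\alpha}\}$ the noise contribution does not exceed $q_{1-\alpha}$, so $\gamma(n,h)>q_{1-\alpha}$ and a detection $(n_k,h_k)$ with $h_k\le h$ is recorded. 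Uniqueness and the ``no spurious overlaps'' properties then follow exactly as in Theorem \ref{theo2}, from the ordering $\preceq$ and the removal operation $\circleddash$, because the detected widths are all of smaller order than $\min_k\delta_k$.

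The main obstacle is precisely what distinguishes this borderline setting from Theorems \ref{theo1}--\ref{theo2}: at $\beta=1/2$, the process $W$ lies outside $\mathcal{C}^\rho([0,1],\mathbb{B})$, so the Hölder-space invariance principle behind Assumption \ref{ass_1} is unavailable and standard weak convergence on $\mathcal{C}^\rho$ cannot deliver the limit $L^\rho$. Assumption \ref{ass_5}(ii) together with the requirement $q_{1-\alpha}>C_W$ compensates for this gap: the uniform modulus bound $C_W$ plays the role of the failing vanishing-modulus condition and guarantees that the portion of the supremum near $y=0$—inaccessible to the lattice—cannot spuriously exceed the threshold. Handling this boundary behavior carefully, both in the approximation of $L^\rho$ by its lattice restriction in part (i) and in the ``no false alarm'' event used throughout part (ii), is the heart of the argument.
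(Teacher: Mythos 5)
Your overall blueprint is the paper's own: the exact telescoping $\gamma_0(n,h)=\|W((n+h)/N)-2W(n/N)+W((n-h)/N)\|/\rho(h/N)$, the pathwise bound of the lattice maximum by $L^\rho$ for weak localization, the interior-lattice approximation for exact level in the no-change case, and the Theorem \ref{theo2} machinery with $H_k\asymp \log(N)/\Delta_k^2$ for part (ii). The genuine gap is in your detection step for part (ii): you bound the noise on the event $\{L^\rho\le q_{1-\alpha}\}$, which has probability $1-\alpha$, whereas strong localization must hold with probability converging to $1$. With a fixed implicit constant $A$ in $h= A\log(N)/\Delta_k^2$, the signal-to-scale ratio is $\asymp A^{1/2}$, a \emph{constant}; it exceeds any fixed threshold but cannot beat the a.s.\ finite yet random variable $L^\rho$ with probability tending to one, and letting $A=A_N\to\infty$ would destroy the claimed bound $h_k\le C_0\log(N)/\Delta_k^2$ with fixed $C_0$. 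The paper's resolution is to control the noise only at small scales: for any null sequence $u_N$, Assumption \ref{ass_5}(ii) gives the almost sure deterministic bound $\limsup_N \max_{(n,h)\in \mathbf{N}_N:\, h< Nu_N}\gamma_0(n,h)\le C_W<q_{1-\alpha}$, an event of probability $\to 1$, on which drift $\ge \tfrac{5}{2}q_{1-\alpha}$ minus noise $\le q_{1-\alpha}$ clears the threshold at $(c_k,H_k)$. The same small-scale event is what rules out spurious detections at scales $h\le Nu_N$ that could delete true pairs via $\circleddash$ before they are processed — a point you defer to ``exactly as in Theorem \ref{theo2}'', but this is precisely where the Hölder-membership argument of Theorem \ref{theo2} fails (as you yourself note, $W\notin\mathcal{C}^\rho$ here) and the modulus bound must be substituted explicitly; your closing paragraph gestures at this mechanism but your actual detection argument does not use it.

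Two secondary points. First, a constant mismatch: you require $2C_W<q_{1-\alpha}$, which is strictly stronger than the stated assumption $q_{1-\alpha}>C_W$. The plain triangle inequality does give $2C_W$ for second-order increments, and the paper's own closing remark states the small-scale limsup as $\le C_W$, so either the modulus constant must be understood as bounding second differences directly, or the threshold assumption must be strengthened — but you should not silently assume the factor $2$ is absorbed. Second, your boundary worry in part (i) is unnecessary: the paper proves $\max_{(n,h)\in\mathbf{N}_N}\tilde\gamma_0(n,h)\to L^\rho$ a.s.\ by a truncation sandwich — for each $z>0$ the lattice maximum dominates the maximum over $\{h\ge zN\}$, which converges to $L^{\rho,z}$ by continuity of $W$ and $\inf_{y\ge z}\rho(y)>0$, and $L^{\rho,z}\uparrow L^\rho$ as $z\downarrow 0$ by the convention that the ratio in \eqref{e:defLrho} vanishes at $y=0$. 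Combined with the pathwise upper bound by $L^\rho$, no control of the region $y\downarrow 0$ (and no appeal to $C_W$) is needed for the exact-level claim; the modulus bound is needed only in part (ii).
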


Proposition \ref{prop1} shows that changes are detected under optimal conditions if the number of changes $K=K_N$ diverges. A similar but distinct condition exists in the case where the number of changes is small. More precisely, Proposition 2.1 in \cite{cho:kirch:2024}  
considers the problem of detecting an epidemic change in a univariate time series on a small segment. Here 
\begin{align} \label{e:formX}
\mathbb{E}X_i = \mu +\mu' \mathbb{I}\{c_1 \le i \le c_2\},
\end{align}
where $1 \le c_1<c_2 < N$, the length of the segment is $\delta= c_2-c_1+1$ and the magnitude of the change $\Delta = \|\mu'\|$. If $\delta/N \to 0$, it turns out that the two hypotheses
\[
H_0: \Delta=0 \qquad vs. \qquad H_1: \Delta>0
\]
are indistinguishable, if 
$\delta^{1/2}\Delta \le \log^{1/2}(N/\delta)-C_N$ for some sequence $C_N \uparrow \infty$. Similarly as before, we will confine ourselves to the most relevant special case of a small segment and thus assume that for some $r \in (0,1)$ it holds that $N/\delta \sim N^{r}$, which implies $\log^{1/2}(N/\delta) \sim \log^{1/2}(N)$.
We then define the condition for reliable distinguishability as 
\begin{align} \label{e:dist}
\delta^{1/2}\Delta \gg  \log^{1/2}(N),
\end{align}
and define the test decision for the hypotheses pair $H_0-H_1$, to reject $H_0$ if 
\begin{align*} 
    |\textnormal{MultiScan}(\emptyset, \mathbf{N}_N, q_{1-\alpha})|>0.
\end{align*}

\begin{prop} \label{prop2}
   Suppose the epidemic change point model \eqref{e:formX} and that
 Assumption \ref{ass_5} holds. Then, it follows under $H_0$ that 
 \begin{align*}
    \lim_{N \to \infty} \mathbb{P}_{H_0}(|\textnormal{MultiScan}(\emptyset, \mathbf{N}_N, q_{1-\alpha})|>0) = \alpha
 \end{align*}
 and under the alternative with \eqref{e:dist}  that
 \[
     \lim_{N \to \infty} \mathbb{P}_{H_1}(|\textnormal{MultiScan}(\emptyset, \mathbf{N}_N, q_{1-\alpha})|>0) = 1.
 \]
\end{prop}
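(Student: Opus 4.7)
Under $H_0$ the model has $K = 0$, so the weak localization condition \eqref{e:weakl} can be satisfied only if $\mathbf{C} = \emptyset$ (the existential over an empty index set is false). Proposition \ref{prop1}(i) grants exact equality in \eqref{e:weakl} when no change occurs, yielding $\lim_{N \to \infty} \mathbb{P}_{H_0}(\mathbf{C} = \emptyset) = 1 - \alpha$, equivalently the stated $\lim_{N \to \infty} \mathbb{P}_{H_0}(|\mathbf{C}| > 0) = \alpha$. So the null claim is an immediate corollary of Proposition \ref{prop1}; no additional argument is required.

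\textbf{Alternative.} My plan is to exhibit a single pair $(n^*, h^*) \in \mathbf{N}_N$ whose scan statistic diverges in probability, which alone forces Algorithm \ref{alg1} to add at least one element to $\mathbf{C}$. Because $\delta/N \to 0$ (since $N/\delta \sim N^r$), for large $N$ at least one of $c_1 - 1 \ge \delta$ or $N - c_2 \ge \delta$ holds; I split into symmetric cases and assume the former. Set $n^* := c_1 - 1$ and take $h^* := \lfloor \delta/2 \rfloor$ if $\mathbf{N}_N = \mathbf{N}^{all}$, or the largest $\lfloor \theta^m \rfloor \le \delta/2$ if $\mathbf{N}_N = \mathbf{N}_N^\theta$; in both cases $(n^*, h^*) \in \mathbf{N}_N$, $h^* \asymp \delta$, and $[n^* - h^* + 1, n^* + h^*]$ straddles the change point $c_1 - 1$. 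A triangle inequality then gives
\begin{align*}
\gamma(n^*, h^*) \;\ge\; \frac{h^* \Delta}{N^{1/2} \rho(h^*/N)} \;-\; \gamma_\varepsilon(n^*, h^*),
\end{align*}
where $\gamma_\varepsilon$ denotes the same scan statistic applied to the noise alone. Under Assumption \ref{ass_5}, the relation $\varepsilon_n = \sqrt{N}[W(n/N) - W((n-1)/N)]$ telescopes to
\[
\gamma_\varepsilon(n^*, h^*) = \frac{\|2W(n^*/N) - W((n^* - h^*)/N) - W((n^* + h^*)/N)\|}{\rho(h^*/N)} \le L^\rho,
\]
an almost-surely finite random variable (finiteness follows from \eqref{e:WCadd} for small scales and from continuity of $W$ for large scales). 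With the logarithmic weight, the signal term equals $(h^*)^{1/2}\Delta / \log^{1/2}(N/h^*)$, which using $h^* \asymp \delta$ and $\log(N/h^*) \asymp \log(N/\delta) \asymp \log(N)$ is of order $\delta^{1/2}\Delta / \log^{1/2}(N)$, diverging by the assumed $\delta^{1/2}\Delta \gg \log^{1/2}(N)$. Hence $\gamma(n^*, h^*) \conp \infty$, giving $\mathbb{P}_{H_1}(|\mathbf{C}| > 0) \ge \mathbb{P}_{H_1}(\gamma(n^*, h^*) > q_{1-\alpha}) \to 1$.

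\textbf{Main obstacle.} The probabilistic content is essentially free because Assumption \ref{ass_5}(ii) supplies a direct pathwise bound on $\gamma_\varepsilon$ via the modulus of continuity, bypassing any fresh concentration or Gaussian-approximation step. The real work is combinatorial: verifying that the chosen $(n^*, h^*)$ sits in the pyramid $\mathbf{N}_N^\theta$ without sacrificing the scale $h^* \asymp \delta$ (one loses at most a multiplicative factor $\theta$, absorbed into constants), and handling the two-sided structure of the epidemic by a case split on which endpoint has sufficient buffer to its outside. Beyond this accounting, the signal-versus-noise dichotomy is transparent and the result follows.
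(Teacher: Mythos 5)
Your proposal is correct, and while your null-case argument coincides with the paper's (the paper proves the exact level for Proposition \ref{prop2} by "the same arguments" as for Proposition \ref{prop1}, i.e., via $\max_{(n,h)\in\mathbf{N}_N}\gamma(n,h)=\max_{(n,h)\in\mathbf{N}_N}\tilde\gamma_0(n,h)\to L^\rho$ together with continuity of the law of $L^\rho$ -- which is exactly the content you import by citing Proposition \ref{prop1}(i)), your treatment of the alternative takes a genuinely different route. The paper derives power as a byproduct of its strong-localization machinery: it re-runs Step II of the proof of Theorem \ref{theo2} with $\beta=1/2$, choosing the \emph{small} calibrated bandwidth $H_k=\lfloor(5q_{1-\alpha})^2\log(N)/\Delta^2\rfloor+1$ so that the signal term $H_k^{1/2}\Delta/\log^{1/2}(N/H_k)$ is pinned at a fixed constant exceeding $2q_{1-\alpha}$ (see \eqref{e:logcalc}), with the noise at small scales controlled by the $C_W$-modified version of \eqref{e:nospur} -- this is where the side condition $q_{1-\alpha}>C_W$ enters. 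You instead exhibit a single \emph{large}-scale pair $h^*\asymp\delta$, whose signal $(h^*)^{1/2}\Delta/\log^{1/2}(N/h^*)\asymp\delta^{1/2}\Delta/\log^{1/2}(N)$ diverges under \eqref{e:dist}, and you bound the noise once and for all by the a.s.\ finite $L^\rho$ (the same bound the paper establishes from \eqref{e:WCadd} in its proof of Proposition \ref{prop1}), using the elementary implication that a single exceedance of $q_{1-\alpha}$ forces $|\mathbf{C}|>0$ regardless of which pair the algorithm actually processes. Your route is shorter: it needs no spurious-detection bookkeeping, no conditioning on $\mathcal{E}_N^c$, none of the ordering/$\circleddash$ combinatorics, and not even $q_{1-\alpha}>C_W$, since the diverging signal eventually dominates any finite noise level. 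What it gives up is localization: the paper's small window shows the change is flagged at scale $\log(N)/\Delta^2$ (the rate quoted in Proposition \ref{prop1}(ii)), whereas your window of width $\asymp\delta$ only certifies detection -- which is all Proposition \ref{prop2} asserts, so this is a legitimate economy here. Your boundary accounting is also sound: since $N/\delta\sim N^r$ with $r\in(0,1)$ forces $\delta\to\infty$ and $(c_1-1)+(N-c_2)=N-\delta$, the case split guarantees one side has buffer $\ge\delta\ge h^*$, the windows stay within $\{1,\dots,N\}$, $(n^*/N,h^*/N)\in\blacktriangle_{0,1}$ so the $L^\rho$ bound applies, and the pyramid $\mathbf{N}_N^\theta$ constrains only $h$, costing at most the factor $\theta$ you note.
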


\subsection{Convergence in the Hölder space} \label{sec23}

Assumption \ref{ass_1} requires the convergence of the partial sum process of errors in the Hölder space $\mathcal{C}^\rho([0,1], \mathbb{B})$. In this section, we discuss some known results when this condition holds.

\textbf{Literature for weak convergence in $\mathcal{C}^\rho([0,1])$} First, we consider extensions of Example \ref{ex1} to separable Banach spaces. More precisely, consider a sequence of linear operators $(A_i)_{i \in \mathbb{Z}}$ acting on $\mathbb{B}$ and satisfying the summability condition $\sum_i \|A_i\|_\mathcal{L}<\infty$. Moreover, denote by $(\psi_i)_{i \in \mathbb{Z}}$ a sequence of centered i.i.d. variables in $\mathbb{B}$.
Theorem 6 in \cite{rackauskas:suquet:2010} 
maintains that the linear process
$\varepsilon_n := \sum_{i} A_i \psi_{n-i}$ satisfies Assumption \ref{ass_1}, with the limit $W$ being a Banach space-valued Brownian motion, under the following two conditions:
\begin{itemize}
    \item[i)] The sequence $(\psi_i)_{i \in \mathbb{Z}}$ satisfies the Central Limit Theorem (CLT).
    \item[ii)] It holds  for every $\eta>0$ that $
    \lim_{t \to \infty} t\mathbb{P}(\|\psi_1\|>\eta t^{1/2}\rho(t^{-1}))=0$.
\end{itemize}
Condition ii) is essentially a moment condition and turns out to be sharp in the i.i.d. case where $\varepsilon_n=\psi_n$ \citep[see][]{rackauskas:suquet:2009}. In particular, this condition  implies that $\mathbb{E}\|\psi_1\|^2<\infty$. Condition i) implies that the errors $(\psi_i)_{i \in \mathbb{Z}}$ satisfy a CLT, which is then inherited by the linear process. In a classical work \cite{hoffmann-jorgensen:pisier:1976} showed that any sequence of i.i.d. square integrable random variables (and in particular the above sequence $(\psi_i)_i$) satisfies the CLT in a Banach space of type 2 - indeed this is an equivalence. An important example of such spaces are the $L^p([0,1])$ spaces for $p\ge 2$, where $p=2$ corresponds to Example \ref{ex1}. 
Convergence on $L^p$-spaces with $1 \le p \le 2$ has been considered, for example, in the classical note of \cite{zinn:1977}. Another important example of a Banach space that is not of type 2, but plays an important role in FDA, is the set of real valued continuous functions
$\mathcal{C}([0,1], \mathbb{R})$.  It is also of interest since any other separable Banach space is isometrically isomorphic to some (closed) subspace of $\mathcal{C}([0,1], \mathbb{R})$. \cite{jain:marcus:1975} demonstrated that if the stronger assumption $\mathbb{E}\|\psi_1\|_{\rho^{poly,r}}^2<\infty$ holds for some $r>1/2$, then $(\psi_i)_i$ already satisfies the CLT in $\mathcal{C}([0,1], \mathbb{R})$. 
More general results on the CLT in  $\mathcal{C}([0,1], \mathbb{R})$ follow from general empirical process theory, such as Theorem 2.2.4 in \cite{vaart:wellner:1996}.

 In general Banach spaces, there exist fewer results beyond independent variables and linear processes. 
 A recent work of \cite{kutta:doernemann:2025} considers Hölderian invariance principles in separable Hilbert spaces under the condition of  $L^p$-$m$-approximability   \citep[see][for a definition]{hormann:kokoszka:2010}. For real-valued data, various dependence concepts have been considered, including strong mixing \citep{hamadouche:2000}, \cite{giraudo:2017}, Maxwell and Woodroofe conditions \citep{giraudo:2018} and martingale difference schemes \citep{giraudo:2016}.

\textbf{Mixing time series of continuous functions} 
We contribute to the literature on weak  convergence in Hölder spaces, by establishing a result of the form \eqref{e:holder}
for the case $(\mathbb{B}, \| \cdot \|) =  \big ( \mathcal{C}([0,1]), \| \cdot \|_\infty \big )$, i.e. the space of continuous functions equipped with the supremum norm.  In contrast to all previously cited results, we do not require the errors to be stationary and hence obtain a limiting distribution that may not be a Brownian motion. Furthermore, it seems that previously no Hölder-type convergence has been derived for mixing functional time series, even in the stationary case. For a definition and discussion of mixing coefficients, we refer the reader to \cite{bradley:2005}.\\
Notice that in the space $\mathcal{C}([0,1], \mathbb{R} )$, each of the errors is itself a function, that is  $\varepsilon_n = \{\varepsilon_n(s): s \in [0,1]\}$ and the partial sum process can be interpreted as a continuous functions in two arguments $P_N(x,s):=P_N(x)(s) $. 
We impose the following assumptions. 

\begin{assumption} Let $q$ be an even integer and $p>q$ a number.
\label{assmixing}
    \begin{itemize}
        \item[i)] The centered errors $\varepsilon_n$ satisfy for some $r \in (0,1/2)$ with $q>1+1/r$ the moment conditions
        \[
        \sup_n\mathbb{E}\big[\|\varepsilon_n\|^p\big]<\infty, \qquad \sup_n\mathbb{E} \bigg[\bigg(\sup_{0<s<t<1}\frac{|\varepsilon_n(s)-\varepsilon_n(t)|}{|s-t|^r}\bigg)^p\bigg]<\infty.
        \]
        \item[ii)] The time series  $(\varepsilon_n)_{n \in \mathbb{N}}$ is $\alpha$-mixing, with mixing coefficients satisfying $\alpha(n) \le a b^{-n}$, for some constants $a>0$ and $b \in (0,1)$.
        \item[iii)] There exists a continuous covariance function $c^\varepsilon:[0,1]^4\to \mathbb{R}$ such that
        \[
        c^{\varepsilon}(x,y, s,t):= \lim_{N \to \infty}\mathbb{E}\big[P_{N}(x,s)P_{N}(y,t)]\big], \qquad x,y,s,t \in [0,1].
        \]
    \end{itemize}
\end{assumption}

\begin{theo} \label{propmixing}
{\it   Let Assumption \ref{assmixing} be satisfied and assume that $\rho $
   is the polynomial weight function with parameter $\beta$ such that  $q>(1/2-\beta)^{-1}$.
   Then it holds that 
    \begin{align*}
    \{P_{N}(x): x \in [0,1]\}\overset{d}{\to}\{W(x): x \in [0,1]\}
    \end{align*}
    in the Hölder space $\mathcal{C}^\rho([0,1],  \mathcal{C}([0,1]))$, where the limiting process $W$ is characterized by the covariance function $c^{\varepsilon}$.
    }
 \end{theo}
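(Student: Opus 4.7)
The plan is the standard two-step approach: prove finite-dimensional convergence and then tightness in $\mathcal{C}^\rho([0,1],\mathcal{C}([0,1]))$. For any finite collection $\{(x_i,s_i)\}_{i=1}^m\subset[0,1]^2$, the vector $(P_N(x_i)(s_i))_{i=1}^m$ converges to a centered Gaussian with covariance determined by $c^\varepsilon$; this is a multivariate CLT for the $\mathbb{R}^m$-valued $\alpha$-mixing sequence $(\varepsilon_n(s_j))_{j=1}^m$, and is available in classical form (e.g.\ Rio or Bradley) because the mixing coefficients decay geometrically and $\sup_n\mathbb{E}\|\varepsilon_n\|_\infty^p<\infty$ with $p>q\ge 2$. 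Assumption \ref{assmixing}\,iii) then identifies the limiting covariance.

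The technical core is tightness in $\mathcal{C}^\rho([0,1],\mathcal{C}([0,1]))$, for which I would use a Räčkauskas--Suquet-type dyadic criterion tailored to the polynomial weight $\rho(x)=x^\beta$: it is enough to show that for every $\eta>0$,
\begin{equation*}
\lim_{J\to\infty}\limsup_{N\to\infty}\mathbb{P}\Bigl(\max_{j\ge J}\,2^{j\beta}\max_{0\le k<2^j}\|\lambda_{N,j,k}\|_\infty>\eta\Bigr)=0,
\end{equation*}
where $\lambda_{N,j,k}:=P_N((2k+1)/2^{j+1})-\tfrac12\bigl(P_N(k/2^j)+P_N((k+1)/2^j)\bigr)$. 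Via Markov's inequality this reduces to the key moment bound
\begin{equation*}
\sup_N\mathbb{E}\|P_N(x)-P_N(y)\|_\infty^q\;\le\;C\,|x-y|^{q/2},\qquad x,y\in[0,1];
\end{equation*}
a union bound over the $2^j$ cells then produces the geometric series $\sum_{j\ge J}2^{-j(q(1/2-\beta)-1)}$, which converges precisely under the standing hypothesis $q>(1/2-\beta)^{-1}$.

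The moment bound is where most of the work lives, and I would split it via the embedding $\|f\|_\infty\le |f(0)|+[f]_{r'}$ valid for every $r'$-Hölder function $f$. The pointwise piece $\mathbb{E}|(P_N(x)-P_N(y))(0)|^q\le C|x-y|^{q/2}$ follows from a Rosenthal-type inequality for $\alpha$-mixing sums (Doukhan or Shao), which applies because $p>q$ and the mixing coefficients decay geometrically; the microscopic regime $|x-y|<1/N$, in which at most one summand contributes, is handled directly from $\sup_n\mathbb{E}\|\varepsilon_n\|_\infty^q<\infty$. For the Hölder seminorm in $s$ I apply the same Rosenthal bound to the difference process $\varepsilon_n(s)-\varepsilon_n(t)$, using Assumption \ref{assmixing}\,i) to get $\mathbb{E}|\varepsilon_n(s)-\varepsilon_n(t)|^p\le C|s-t|^{rp}$, which yields
\begin{equation*}
\mathbb{E}\bigl|(P_N(x)-P_N(y))(s)-(P_N(x)-P_N(y))(t)\bigr|^q\le C\,|x-y|^{q/2}\,|s-t|^{rq};
\end{equation*}
a quantitative Kolmogorov--Chentsov chaining in $s$ then gives $\mathbb{E}[P_N(x)-P_N(y)]_{r'}^q\le C|x-y|^{q/2}$ for any $r'<r-1/q$, and the hypothesis $q>1+1/r$ guarantees such an $r'>0$ exists. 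Once tightness is in hand, identification of the limit as the Gaussian process with covariance $c^\varepsilon$ is automatic from the finite-dimensional convergence.

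The main obstacle will be propagating the two distinct scales $|x-y|^{q/2}$ (in time) and $|s-t|^{rq}$ (in the function argument) through a single Rosenthal-type inequality while keeping the variance term dominant over the higher-moment outlier term. Concretely, one must check that at every use of Rosenthal the assumption $p>q$ combined with the geometric mixing rate truly suppresses the outlier term below the variance term uniformly in $x,y,s,t$; this is precisely what ties the three numerical hypotheses $p>q$, $q>1+1/r$, and $q>(1/2-\beta)^{-1}$ into a self-consistent package.
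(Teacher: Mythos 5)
Your proposal is correct in substance but organizes the argument differently from the paper. The paper proceeds in two stages: it first proves weak convergence of the two-parameter process $(x,s)\mapsto P_N(x,s)$ in $\mathcal{C}([0,1]\times[0,1],\mathbb{R})$ --- finite-dimensional convergence via a CLT for mixing triangular arrays (Theorem 3.27 in Dehling--Mikosch--S\o rensen) and tightness via the maximal inequality of van der Vaart--Wellner (Theorem 2.2.4) applied with the metric $d((x,s),(y,t))=\max(|x-y|,|s-t|^{r})$ and Yoshihara's moment inequality, which is where $q>1+1/r$ enters through the two-dimensional packing numbers --- and then upgrades to the H\"older topology via Theorem 2.1 of Ra\v{c}kauskas--Suquet (2009), checking its three conditions: marginal tightness (inherited from stage one), $\max_{n\le N}\|\varepsilon_n\|/N^{1/2-\beta}=o_P(1)$ (union bound plus Markov, using $p>q>(1/2-\beta)^{-1}$), and the dyadic equicontinuity condition, which is outsourced to the sufficient moment criterion $\mathbb{E}\|\sum_{i=\ell}^{k}\varepsilon_i\|^{q}\le C|k-\ell|^{q/2}$ of Kutta--D\"ornemann (2025). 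You instead skip the intermediate sup-norm convergence and verify the dyadic criterion directly by Markov plus a union bound from the single increment bound $\sup_N\mathbb{E}\|P_N(x)-P_N(y)\|_\infty^{q}\le C|x-y|^{q/2}$, which you assemble from a Rosenthal inequality for mixing sums (playing the role of Yoshihara) and a one-dimensional Kolmogorov--Chentsov chaining in $s$ (playing the role of the two-dimensional chaining); your exponent bookkeeping --- $q(1/2-\beta)>1$ for the geometric series over dyadic levels, $q>1/r$ (implied by $q>1+1/r$) for the chaining in $s$, and $p>q$ with geometric mixing so that the Rosenthal variance term dominates --- matches the paper's hypotheses exactly, and your direct treatment of the microscopic regime $|x-y|<1/N$ substitutes for the paper's separate verification of the condition $\max_{n}\|\varepsilon_n\|/N^{1/2-\beta}=o_P(1)$. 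What your route buys is a single self-contained moment bound driving both the tightness and the fine-scale control; what the paper's route buys is that the intermediate $\mathcal{C}([0,1]^2)$-convergence automatically supplies the marginal tightness input of the Ra\v{c}kauskas--Suquet criterion. On that one point you should be more explicit: the dyadic second-difference control governs only the $x$-direction, and tightness in $\mathcal{C}^\rho([0,1],\mathcal{C}([0,1]))$ additionally requires tightness of each marginal $P_N(x)$ in $\mathcal{C}([0,1])$; your bound $\mathbb{E}\bigl[P_N(x)\bigr]_{r'}^{q}\le C$ (take $y=0$, using $P_N(0)=0$) delivers this via Arzel\`a--Ascoli, but it should be stated as a separate verified condition rather than left implicit in your closing sentence.
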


\section{An application to changes in distribution} \label{sec3}

We consider the problem of finding changes in the distribution functions of a data panel. This problem has been studied for time series by \cite{carlstein:1988} and more recently for entire panels  by \cite{horvath:kokoszka:wang:2021} in the context of sequential change point detection. We consider a similar setup as in \cite{horvath:kokoszka:wang:2021}, but in a retrospective formulation and with multiple changes.
More precisely, let $F_1, F_2,....,F_N$ be continuous cumulative distribution functions and suppose that for each $n=1,...,N$ we obtain observations

\[
Y_{n,1},...,Y_{n,M} \sim F_n 
\]
with all observations $Y_{n,m}$ independent across $n$ and $m$. We assume that
\[
F_n = F_N^{(j)}, \qquad \textnormal{for} \,\,\, c_{j-1}<n\le c_j,
\]
where as before $c_j$ denotes the change point location (see eq. \eqref{e:mod1}). For each $n$, we define the empirical distribution function
\[
\widehat{F}_{n,M}(t):= \frac{1}{M}\sum_{m=1}^M \mathbb{I}\{Y_{n,m}\le t\}\qquad \textnormal{and its scaled version} \qquad X_{n,M}:= \sqrt{M}\widehat{F}_{n,M}.
\]
We will formulate our subsequent result for joint asymptotics of $N$ and $M=M_N$, where $M$ can either be fixed or a diverging sequence $M_N \to \infty$ as $N \to \infty$. Since our theory requires separability, we embed the cdfs into a separable Banach space and choose the weighted $L^2$-functions on the real line. More precisely, letting $w$ be a continuous probability density that is positive everywhere on the real line, we define the inner product
\[
\langle f,g\rangle := \int_\mathbb{R} f(x) g(x) w(x) dx
\]
and the corresponding norm by $\|f\|^2 := \langle f,f\rangle $. The resulting space, which we will simply denote by $L^2(\mathbb{R})$, is a separable Hilbert space.
Next, we notice that the empirical cdfs are independent and satisfy the  exponential moment condition for any $\beta>1/2$
\begin{align} \label{e:errorF}
\varepsilon_{n,M} := \sqrt{M}[\widehat{F}_{n,M}-F_n], \qquad 
\sup_{n,M} \mathbb{E}\exp(d \|\varepsilon_{n,M}\|^{1/\beta})<\infty, \quad \forall d >0
\end{align}
which follows by the two-sided Dvoretzky–Kiefer–Wolfowitz inequality (for details see the proof of Theorem  \ref{theo3}).
The errors are independent, but non-stationary in the presence of changes, as their covariance function depends on $F_n$. To state convergence of the partial sum process, we now need to impose some regularity conditions on the functions $F_1,...,F_N$ - this will be needed to obtain a convergent covariance function. We choose a standard formulation of the problem, with a fixed number of $K$ change points and parameters 
\[
0=:\theta_0<\theta_1<...<\theta_K<\theta_{K+1}:=1\quad \textnormal{and} \quad c_k = 
 \lfloor N \theta_k \rfloor.
\]
We point out that, at the expense of a more involved presentation, it is also possible to modify the results from this section for scenarios where the number of changes $K=K_N$ diverges with $N$. For the sake of clarity we do not pursue such a regime here. We now formulate the main assumption of this section. 
\begin{assumption} \label{ass_3}
    There exist continuous cdfs $F^{(k,*)}$ $k=1,...,K+1$ such that 
    \[
   \lim_{N \to \infty }\max_{k=1,...,K+1} \sup_{x \in \mathbb{R}}|F_N^{(k)}(x)-F^{(k,*)}(x)| =0.
    \]
\end{assumption}
A setting where $F^{(k,*)} = F^{(j,*)}$ for all $k \neq j$ corresponds to local alternatives. Notice that here, the jump size depends on both $N$ (the convergence rate of $F^{(k)}_N$ to $F^{(k,*)}$) and $M$, since
\[\Delta_j := \sqrt{M}\|F_N^{(j)}-F_N^{(j-1)}\|.
\]
We now demonstrate that the convergence from Assumption \ref{ass_1} holds.
\begin{theo} \label{theo3}
{\it Suppose the model proposed in this section and suppose that Assumption \ref{ass_3} holds. Then, the partial sum process of errors $P_N$, defined in \eqref{e:lin}, based on the model errors \eqref{e:errorF} satisfies the weak convergence 
\[
    \{P_{N}(x): x \in [0,1]\}\overset{d}{\to}\{W(x): x \in [0,1]\}
\]
in the Hölder space $\mathcal{C}^\rho\big([0,1], L^2(\mathbb{R})\big)$ for any weight function $\rho$, polynomial or logarithmic, defined in \eqref{polyweights} and \eqref{logweights}. Here, $W$ is a Gaussian process with sample paths in $L^2(\mathbb{R})$ and governed by the covariance function
\begin{align*}
C^\varepsilon(x,y)[f,g]:= &\int_0^{\min(x,y)}\bigg\{ \int_\mathbb{R}\int_\mathbb{R} f(s) g(t) c^\varepsilon(s,t,z) w(s)w(t) ds dt \bigg\}dz,\qquad \textnormal{where}\\
c^\varepsilon(s,t,z):= & [F^{(k,*)}(\min(s,t))-F^{(k,*)}(s)F^{(k,*)}(t)]\cdot \mathbb{I}\{z \in [\theta_{k-1}, \theta_{K+1}]\}.
\end{align*}
}
\end{theo}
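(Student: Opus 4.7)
\textbf{Proof plan for Theorem \ref{theo3}.}

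My plan is to decompose the argument into (a) verifying the exponential moment bound for the errors, (b) establishing finite-dimensional convergence with the stated covariance, and (c) proving tightness in the relevant Hölder space. Step (a) is the easy preliminary: since each $\widehat{F}_{n,M}$ is a scaled empirical cdf of $M$ i.i.d. samples from $F_n$, the two-sided DKW inequality gives $\mathbb{P}(\sup_x|\widehat{F}_{n,M}(x)-F_n(x)| > t/\sqrt{M}) \le 2\exp(-2t^2)$, so the sup-norm is sub-Gaussian uniformly in $n, M$. Because $w$ is a probability density, the weighted $L^2$-norm is dominated by the sup-norm, which yields \eqref{e:errorF} for every $\beta>1/2$ and hence uniform polynomial moments of all orders.

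For finite-dimensional convergence, fix $0 \le x_0 < x_1 < \cdots < x_d \le 1$. The increments $P_N(x_{j})-P_N(x_{j-1})$ are (up to a negligible boundary term of order $N^{-1/2}$) sums of $N^{-1/2}\varepsilon_{n,M}$ over the disjoint index blocks $\{\lfloor Nx_{j-1}\rfloor+1,\dots,\lfloor Nx_j\rfloor\}$. Since the $\varepsilon_{n,M}$ are independent, centered, take values in the separable Hilbert space $L^2(\mathbb R)$, and have uniform fourth (indeed all) moments, a triangular-array CLT in Hilbert space (Lindeberg version, cf.\ \cite{ledoux:talagrand:1991}) applies block by block; the blocks being independent gives joint normality. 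The covariance of $\varepsilon_{n,M}$ as a bilinear form on $L^2(\mathbb{R})$ equals $\int f(s)g(t)[F_n(\min(s,t))-F_n(s)F_n(t)]w(s)w(t)\,ds\,dt$, so summing over $n \le \lfloor Nz\rfloor$ and dividing by $N$ produces a Riemann sum that converges, by Assumption \ref{ass_3} and dominated convergence, to the stated $C^\varepsilon(x,y)[f,g]$ after taking $\min(x,y)$.

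The main technical step is tightness in $\mathcal{C}^\rho([0,1], L^2(\mathbb R))$. I would use the dyadic characterization: for each $j$ split $[0,1]$ into $2^j$ blocks and control
\begin{equation*}
\max_{0\le k < 2^j} \Bigl\|\sum_{i=k2^{-j}N+1}^{(k+1)2^{-j}N}\!\varepsilon_{i,M}\Bigr\|
\end{equation*}
so that $\sup_{|x-y|\le 2^{-j}}\|P_N(y)-P_N(x)\|/\rho(2^{-j})$ is absorbed by a summable series. Because the errors are independent with uniform exponential moments in a type-$2$ space, a Rosenthal/Pinelis inequality gives, for any $p$, $\mathbb{E}\|\sum_{i \in B}\varepsilon_{i,M}\|^p \lesssim |B|^{p/2}$ uniformly in the block $B$; in the logarithmic weight case one strengthens this to a Bernstein-type bound $\mathbb{P}(\|\sum_{i\in B}\varepsilon_{i,M}\| > t\sqrt{|B|}) \lesssim \exp(-ct^{1/\beta})$ using the uniform exponential moment \eqref{e:errorF}. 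Combining with a union bound over $2^j$ blocks and all $j$ gives, for any $\eta>0$, a deterministic $M_\eta$ with $\limsup_N \mathbb{P}(\|P_N\|_\rho > M_\eta) < \eta$, as well as the vanishing modulus condition required by the definition of $\mathcal{C}^\rho$. The same moment bounds show that the limit process lives in $\mathcal{C}^\rho$ almost surely, completing tightness.

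The principal obstacle is that non-stationarity rules out a direct appeal to the Hölderian invariance principles of \cite{rackauskas:suquet:2009,rackauskas:suquet:2010}, all of which are stated for stationary data. However, since independence is preserved and the moment condition \eqref{e:errorF} is \emph{uniform} in $n$, the usual Rosenthal and Bernstein bounds on block sums are insensitive to whether the summands share a common distribution; the only place the non-stationarity really enters is the covariance computation, where Assumption \ref{ass_3} and the Riemann-sum argument handle it cleanly. A minor annoyance is the continuous linear interpolation in the definition \eqref{e:lin} of $P_N$, but the interpolation error on each dyadic grid is a single scaled error term $N^{-1/2}\varepsilon_{\lfloor Nx\rfloor+1,M}$ and is easily absorbed.
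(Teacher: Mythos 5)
Your skeleton (DKW exponential moments; segment-wise CLT for the finite-dimensional laws; dyadic block bounds for Hölder tightness) matches the paper's architecture, but there is a genuine gap at the infinite-dimensional heart of the argument. In step (b) you invoke "a triangular-array CLT in Hilbert space (Lindeberg version)" and verify only the \emph{pointwise} convergence of the covariance bilinear forms $C_N[f,g]\to C^\varepsilon(x,y)[f,g]$. In an infinite-dimensional Hilbert space this is not enough: Lindeberg's condition plus weak convergence of covariance operators yields convergence of all finite-dimensional projections but does not give tightness of the $L^2(\mathbb{R})$-valued sums, so no CLT follows without an additional uniform compactness condition on the array. The same omission resurfaces in step (c): your dyadic union bound controls \emph{norms} of block sums, which gives stochastic boundedness of $\|P_N\|_\rho$, but relative compactness in $\mathcal{C}^\rho([0,1],L^2(\mathbb{R}))$ also requires tightness of $P_N(x)$ in the fiber space $L^2(\mathbb{R})$ for fixed $x$ (condition (i) of Theorem 2.1 in \cite{rackauskas:suquet:2009}, which the paper uses as its tightness criterion), and norm bounds never produce tightness in an infinite-dimensional space. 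The paper closes exactly this hole by applying Corollary 4.6 of \cite{samur:1984} segment-wise and verifying its flat-concentration condition $\lim_{d\to\infty}\sup_N \mathbb{E}\|\varepsilon_{1,M}-\Pi_d[\varepsilon_{1,M}]\|^2=0$; this in turn is established via a KMT strong approximation \citep{kmt:1975} showing $\varepsilon_{1,M}$ converges to $B(F^{(1,*)})$ for a Brownian bridge $B$, using the uniform convergence in Assumption \ref{ass_3}. Some such uniform finite-dimensional approximation (equivalently, trace-norm rather than merely weak convergence of the covariance operators) is indispensable, and your proposal contains no substitute for it.

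A secondary, quantitative problem: your Bernstein-type bound $\mathbb{P}(\|\sum_{i\in B}\varepsilon_{i,M}\|>t\sqrt{|B|})\lesssim \exp(-ct^{1/\beta})$ with a \emph{fixed} constant $c$ is too weak for the logarithmic weights. At threshold $t=a\log^{\beta}(2^j)$ it gives $2^{-ca^{1/\beta}j}$ per block, so after the union bound over $2^j$ blocks the level-$j$ contribution is $2^{(1-ca^{1/\beta})j}$, which diverges for small $a$ — yet the dyadic criterion must hold for \emph{every} $a>0$. The paper avoids this by observing that a block sum over a stationary stretch, suitably rescaled, is again a centered empirical cdf over $ML$ data points, so the two-sided DKW inequality \citep{massart:1990} applies directly to the \emph{block sum} and yields square-exponential tails $\exp(-C\log^{2\beta}(2^j))$; since $2\beta>1$ this beats the $2^j$ entropy for every $a>0$. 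Your generic sub-Weibull bound with exponent $1/\beta<2$ throws away precisely this sub-Gaussian self-similarity of the empirical process and would need to be replaced either by the paper's trick or by a norm-concentration inequality in the Gaussian regime (tails $\exp(-ct^2)$), handling separately the at most $K$ blocks per level that straddle a change point.
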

In practice the covariance operator $C^\varepsilon$ is unknown, but can be approximated by the following, natural estimator:
\begin{align} \label{e:hC}
\widehat{C}^\varepsilon(x,y)[f,g]:= \sum_{n=1}^{\lfloor N \min(x,y) \rfloor} \int_\mathbb{R} \int_\mathbb{R} f(s) g(t) [\widehat{F}_{n,M}(\min(s,t))-\widehat{F}_{n,M}(s)\widehat{F}_{n,M}(t)] ds dt.
\end{align}
Theorem \ref{theo3} demonstrates a weak convergence result of the partial sum process in the Hölder space. For the model considered in this section, this validates Assumption \ref{ass_1} (used in Theorems \ref{theo1} and \ref{theo2}), thus justifying the use of Algorithm \ref{alg1} for multiple change point detection in the distribution functions. 

\section{Monte Carlo Simulations} \label{sec:sim}
This section assesses the finite sample performance of the proposed multi-scale change point detection procedure via Monte Carlo simulation. We first consider an i.i.d. data generating process (DGP), followed by a dependent DGP specified by a functional autoregressive model, and finally the distribution functions of a data panel.

Several implementation choices are required for the simulation study. Data are modeled on a space of square integrable random functions, and if not specified otherwise, supported on the unit interval.
Our procedure depends on the weight function $\rho$, for which we consider the polynomial weights $\rho^{poly, \beta} (x)=x^\beta$ with $\beta=0.25$, and the logarithmic weights $\rho^{log, \beta}(x)=x^{1/2}\log^\beta(x^{-1})$ with $\beta=1$. The MultiScan algorithm further requires an input index set $\mathbf{N} \subset \mathbf{N}^{all}$. We examine both the set of all admissible tuples  $\mathbf{N}^{all}$  and the thinned out version $\mathbf{N}_N^{\theta} = \left\lbrace (n,h) \in \mathbf{N}^{all}: \exists m \in \mathbb{N} \;s.t.\; { h =  \lfloor 1.1^m} \rfloor\right\rbrace $. This design allows us to assess the effect of thinning out on detection accuracy and computational efficiency. All results are based on $R=1,000$ Monte Carlo repetitions. Within each repetition, the critical threshold $q$ is obtained from $B=1,000$ bootstrap samples.

The proposed procedure is designed to detect and localize multiple change points. To evaluate its finite sample performance, we report the empirical rates of successful weak and strong localization across Monte Carlo replications, which are measured as the sums of
\begin{itemize} 
	\item[(i)] \textit{Weak localization:} 
	\begin{align}\label{eq:emp_weak_loc}
		\frac{1}{R} \mathbb{I}\big(~\forall (n,h) \in \mathbf{C} \,\,\exists k \in \{1,...,K\}: n-h+1 \le  c_k \le n+h~\big),
	\end{align}
	where $\mathbb{I}(\cdot)$ is an indicator function, and $R$ is the Monte Carlo repetition times;
	\item[(ii)] \textit{Strong localization:} 
	\begin{align}\label{eq:emp_strong_loc} 
		\frac{1}{R} \mathbb{I}\big(&~\forall (n,h) \in \mathbf{C} \,\,\exists k \in \{1,...,K\}: n-h+1 \le  c_k \le n+h; \\
		&~\forall k \in \{1,...,K\} ~\exists (n,h) \in \mathbf{C} : n-h+1 \le  c_k \le n+h, |\mathbf{C}|=K\big).\nonumber  
	\end{align}
	
\end{itemize}

Our procedure can also be viewed within the conventional hypothesis testing framework. The null hypothesis corresponds to no change, $H_0: K=0$, against the alternative of at least one change, $H_A: K \ge 1$. We reject $H_0$ if the output set $\mathbf{C}$ from Algorithm 1 is non-empty  (i.e. $|\mathbf{C}|>0$). In order to be comparable with conventional hypothesis testing, we also report the empirical size under $H_0$ and the empirical power under $H_A$.

\subsection{Independent functional time series}\label{sec:ind}
Under the i.i.d. DGP, the error process $\{\varepsilon_n\}_{n=1}^N$ in Model \eqref{e:mod2} is specified as
$$
\varepsilon_n (\tau) = \sum_{m=1}^{M} c_{n, m} \phi_m (\tau),  \;\; \tau \in [0,1], \;n=1,...,N, 
$$
where $c_{n,m} \sim i.i.d. \; \mathcal{N}(0, 0.1^2)$, and $\phi_m (\tau), m=1,...,M$ are the B-spline basis functions. We use $M=13$ basis functions, corresponding to order-four B-splines with nine equally spaced interior knots on $[0,1]$.
Under the null (when $K=0$), the mean function in Model \eqref{e:mod2} is given by
$
\mu^{(1)} (\tau) = 0.
$
Under the alternative, we consider four mean structures that differ in the number and configuration of change points:
\begin{itemize}
	\item $H_{A,1}$ (when $K=1$): One change,  a single mean shift,
	$$
	\mu^{(1)} (\tau) = 0, \; \mu^{(2)} (\tau) = 0.05 \mbox{ and } c_1 = \lfloor 0.5N \rfloor.
	$$
	\item $H_{A,2}$ (when $K=2$): Two changes with distinct functional forms,
	\begin{align*}
		\mu^{(1)} (\tau) = 0, &\; \mu^{(2)} (\tau) = 0.05, \; \mu^{(3)} (\tau) = 0.1 \sin(2 \pi \tau), \\
		\text{and }  & c_1 = \lfloor 0.3N \rfloor, c_2 = \lfloor 0.7N \rfloor.
	\end{align*}
	\item $H_{A,3}$ (when $K=3$): Three unequally spaced changes,
	\begin{align*}
		& \mu^{(1)} (\tau) = 0, \; \mu^{(2)} (\tau) = 0.05, \; \mu^{(3)} (\tau) = 0, \;  \mu^{(4)} (\tau) = 0.1 \sin(2 \pi \tau) \\
		\text{and }  & c_1 = \lfloor 0.3N \rfloor, c_2 = \lfloor 0.6N \rfloor , c_3 = \lfloor 0.8N \rfloor.
	\end{align*}
	This scenario allows us to investigate the effects of $\delta_k$ (distance between neighboring changes) and $\Delta_k$ (magnitude of change). The first two changes differ in $\Delta_k$ but share the same $\delta_k$, while the last two have the same $\Delta_k$ but differ in $\delta_k$.
	\item $H_{A,4}$ (when $K=5$): Multiple closely spaced changes with varied functional forms,
	\begin{align*}
		&\mu^{(1)} (\tau) = 0, \; \mu^{(2)} (\tau) = 0.05, \; \mu^{(3)} (\tau) = 0.1 \sin(2 \pi \tau), \;  \mu^{(4)} (\tau) = 0.1 \cos(2 \pi \tau),   \\
		& \mu^{(5)} (\tau) = -0.1 + 0.2\tau,  \; \mu^{(6)} (\tau) = 0.8(\tau-0.5)^2 - 0.1 \\
		\text{and }  & c_1 = \lfloor 0.2N \rfloor, c_2 = \lfloor 0.4N \rfloor , c_3 = \lfloor 0.6N \rfloor, c_4 = \lfloor 0.7N \rfloor , c_5 = \lfloor 0.9N \rfloor.
	\end{align*}
	This complex scenario with five changes showcases the advantage of the proposed multiscale approach in detecting multiple changes of varying form and magnitude.
\end{itemize}


To implement the proposed procedure, it is required to estimate the covariance of the errors, in accordance with \eqref{det11}. For the independent case, we use an estimator based on first-order differences of the data
$$
\widehat{C} = \frac{1}{2(N-1)} \sum_{n=2}^{N} \left( X_n-X_{n-1}\right) \otimes \left( X_n  - X_{n-1}\right).
$$

The next crucial step is to obtain the critical threshold $q$. For this purpose, we employ a bootstrap procedure for functional time series, detailed in Algorithm \ref{alg2}. 
This algorithm generates resampled observations based on the estimated covariance $\widehat{C}$, calculate the bootstrapped statistic ${L}^{\rho}$, and determine the critical threshold $q$ as the $(1-\alpha)^{th}$ quantile of the empirical distribution of the bootstrapped $\tilde{L}^{\rho}$. Finally, we apply Algorithm  \ref{alg1} with the critical threshold $q$ to obtain the output of a set of detected changes $\mathbf{C}$. The entire simulation process (from generating data, calculating the covariance matrix, bootstrapping the critical threshold, and obtaining a set of detected changes) is repeated $R=1,000$ times to evaluate the empirical performance of the proposed procedure.

\begin{algorithm}
	\caption{Bootstrapping procedure to obtain critical threshold for functional time series} \label{alg2}
	\begin{algorithmic}[0]
		\State \textbf{Input:} $\widehat{C}$, $\mathbf{N} \in \mathbf{N}^{all}$, $B$, $\alpha$ 
		\State \textbf{Output:} $q$
		\Function{Boot}{$\widehat{C}$} 
		\State Perform eigenvalue decomposition on $\widehat{C}$ by $\widehat{C} = Q \cdot \Lambda  \cdot Q^\T$
		\State Take the square root of $\Lambda$ and get  ${\Lambda}_{root}$
		\State Obtain the square root of the matrix $\widehat{C}_{root} = Q \cdot {\Lambda^{*}}_{root} \cdot Q^\T$
		\State \parbox[t]{\dimexpr\linewidth-\algorithmicindent}{\hangindent=1.5em  Generate random vectors,  $Z_1, ..., Z_N \sim \mathcal{N}({0}, {I_D})$, where ${I_D}$ is the $D$-variate identity matrix}
		\State \parbox[t]{\dimexpr\linewidth-\algorithmicindent}{\hangindent=1.5em  Compute $\tilde{\epsilon}_n = \widehat{C}_{root} \cdot Z_n $ for $n=1,...,N$}
		\State Calculate 
		$$
		\tilde{L}^{\rho} = \sup_{(n,h)\in \mathbf{N}} \dfrac{\left\| \sum_{\ell=n-h-1}^{n} \tilde{\varepsilon}_n - \sum_{\ell=n+1}^{n+h} \tilde{\varepsilon}_n \right\| }{N^{1/2}\rho(h/N)}
		$$   
		\Return $\tilde{L}^{\rho}$
		\EndFunction
		\State \textbf{Repeat:} Call BOOT($\widehat{C}$) for $B$ times to   to obtain $\{\tilde{L}^{\rho}_{(b)}\}_{b=1}^B$.
		\State \textbf{Set:} $q$ as the empirical $(1-\alpha)$-quantile of $\{\tilde{L}^{\rho}_{(b)}\}_{b=1}^B$.
	\end{algorithmic}
\end{algorithm}
 
Table \ref{tab:H0_size} reports the empirical size under the null for two versions of the input set $\mathbf{N}$ and two choices of weight function $\rho$. The empirical size is observed to converge to the nominal significance level $\alpha$ as $N$ increases, confirming the asymptotic validity in Proposition \ref{prop2}. Comparing between the two weight functions, the size based on the polynomial weights is more undersized when $N=100$, whereas the logarithmic weights provide better size control in small samples.  Notably, the empirical sizes are close when using the full set $\mathbf{N}^{all}$ versus the thinned out version $\mathbf{N}_N^{\theta}$. This indicates that the computationally efficient $\mathbf{N}_N^{\theta}$ input set can be employed without a compromise to the empirical size.

\begin{table}[htbp]
	\centering
	\caption{Empirical size based on the i.i.d. DGP under the null}    
	\begin{tabular}{lrlllllll}
		\toprule
		\toprule
		&       & \multicolumn{3}{c}{$\mathbf{N}^{all}$} &       & \multicolumn{3}{c}{$\mathbf{N}_N^{\theta}$} \\
		&       & 10\%  & 5\%   & 1\%   &       & 10\%  & 5\%   & 1\% \\
		\cmidrule{3-9}    \underline{Polynomial weights} &       &       &       &       &       &       &       &  \\
		$N=100$ &       & 0.078 & 0.035 & 0.011 &       & 0.076 & 0.042 & 0.009 \\
		$N=200$ &       & 0.079 & 0.035 & 0.008 &       & 0.068 & 0.035 & 0.007 \\
		$N=300$ &       & 0.095 & 0.050 & 0.009 &       & 0.093 & 0.047 & 0.006 \\
		&       &       &       &       &       &       &       &  \\
		\underline{Logarithmic weights} &       &       &       &       &       &       &       &  \\
		$N=100$ &       & 0.079 & 0.042 & 0.009 &       & 0.083 & 0.044 & 0.011 \\
		$N=200$ &       & 0.088 & 0.045 & 0.009 &       & 0.092 & 0.028 & 0.006 \\
		$N=300$ &       & 0.104 & 0.055 & 0.010 &       & 0.098 & 0.045 & 0.009 \\
		\bottomrule
		\bottomrule
	\end{tabular}%
	\label{tab:H0_size}%
\end{table}%

To conserve space, the results under the alternatives are reported for the polynomial weights.\footnote{Results based on the logarithmic weights are  similar.} Table \ref{tab:HA_power_IID} presents the empirical performance under the alternative in terms of three measures: empirical power, weak localization, and strong localization. The empirical power and weak localization rates are high across all settings, even for small sample sizes ($N=100$). Strong localization is also effective under $H_{A,1}$ and $H_{A,2}$. For the more complex $H_{A,3}$ and $H_{A,4}$ settings, which involve more changes or smaller separations, larger sample sizes ($N \ge 200$) are necessary to achieve good strong localization rates. Again, the differences between $\mathbf{N}^{all}$ and $\mathbf{N}_N^{\theta}$ are negligible, indicating that the thinned out version is computationally efficient and also effective under the alternatives. Figure \ref{fig:cploc_iid} plots the empirical distribution of the detected locations $n_{(n,h)}^{max}$ from a representative simulation under all four alternatives with $N=300$.

\begin{table}[htbp]
	\centering
	\caption{Empirical performance based on the i.i.d. DGP under the alternative}
	\begin{tabular}{lrccccccc}
		\toprule
		\toprule
		&       & \multicolumn{3}{c}{$\mathbf{N}^{all}$} &       & \multicolumn{3}{c}{$\mathbf{N}_N^{\theta}$} \\
		&       & \multicolumn{1}{l}{Power} & \multicolumn{1}{l}{Weak Loc.} & \multicolumn{1}{l}{Strong Loc.} &       & \multicolumn{1}{l}{Power} & \multicolumn{1}{l}{Weak Loc.} & \multicolumn{1}{l}{Strong Loc.} \\
		\cmidrule{3-9}    \underline{$H_{A,1}$} &       &       &       &       &       &       &       &  \\
		$N=100$ &       & 1.000 & 0.999 & 0.999 &       & 1.000 & 1.000 & 1.000 \\
		$N=200$ &       & 1.000 & 0.999 & 1.000 &       & 1.000 & 0.999 & 1.000 \\
		$N=300$ &       & 1.000 & 1.000 & 1.000 &       & 1.000 & 0.998 & 1.000 \\
		&       &       &       &       &       &       &       &  \\
		\underline{$H_{A,2}$} &       &       &       &       &       &       &       &  \\
		$N=100$ &       & 1.000 & 1.000 & 0.976 &       & 1.000 & 1.000 & 0.961 \\
		$N=200$ &       & 1.000 & 1.000 & 0.999 &       & 1.000 & 1.000 & 1.000 \\
		$N=300$ &       & 1.000 & 1.000 & 1.000 &       & 1.000 & 1.000 & 0.999 \\
		&       &       &       &       &       &       &       &  \\
		\underline{$H_{A,3}$} &       &       &       &       &       &       &       &  \\
		$N=100$ &       & 1.000 & 1.000 & 0.063 &       & 1.000 & 1.000 & 0.078 \\
		$N=200$ &       & 1.000 & 1.000 & 0.824 &       & 1.000 & 1.000 & 0.813 \\
		$N=300$ &       & 1.000 & 1.000 & 0.993 &       & 1.000 & 1.000 & 0.992 \\
		&       &       &       &       &       &       &       &  \\
		\underline{$H_{A,4}$} &       &       &       &       &       &       &       &  \\
		$N=100$ &       & 1.000 & 1.000 & 0.004 &       & 1.000 & 1.000 & 0.002 \\
		$N=200$ &       & 1.000 & 1.000 & 0.624 &       & 1.000 & 1.000 & 0.709 \\
		$N=300$ &       & 1.000 & 1.000 & 0.990 &       & 1.000 & 1.000 & 0.994 \\
		\bottomrule
		\bottomrule
	\end{tabular}%
	\label{tab:HA_power_IID}%
\end{table}%

\begin{figure}
	\centering
	\includegraphics[width=0.45\linewidth]{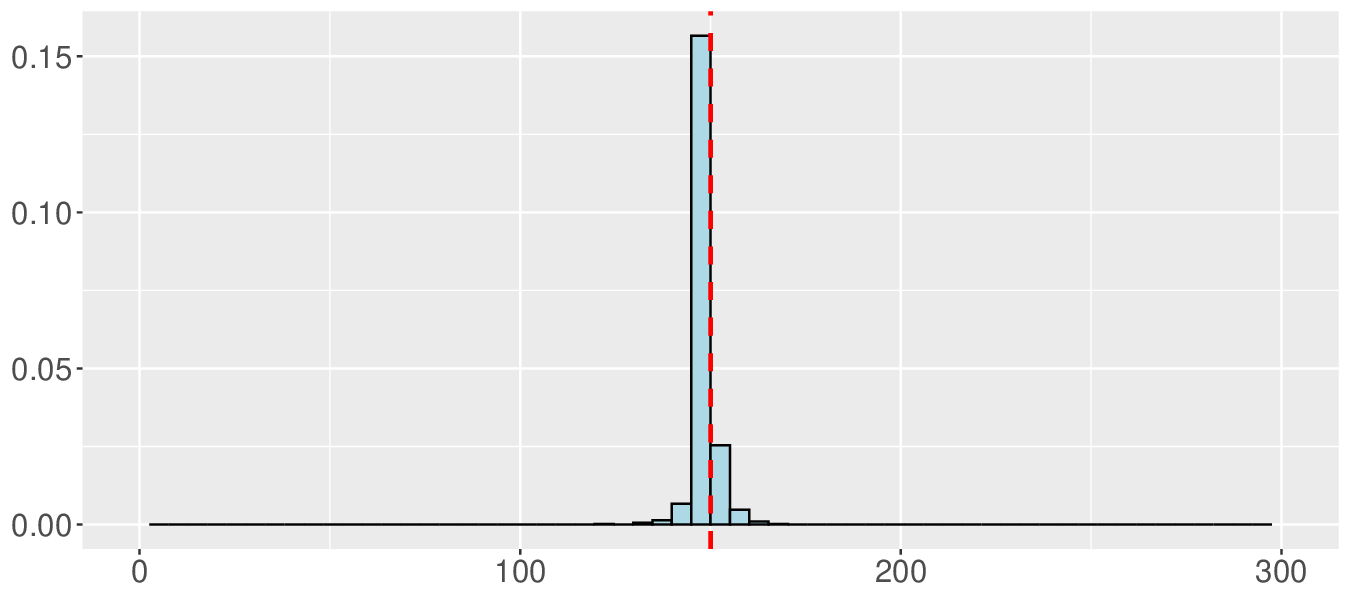}
	\includegraphics[width=0.45\linewidth]{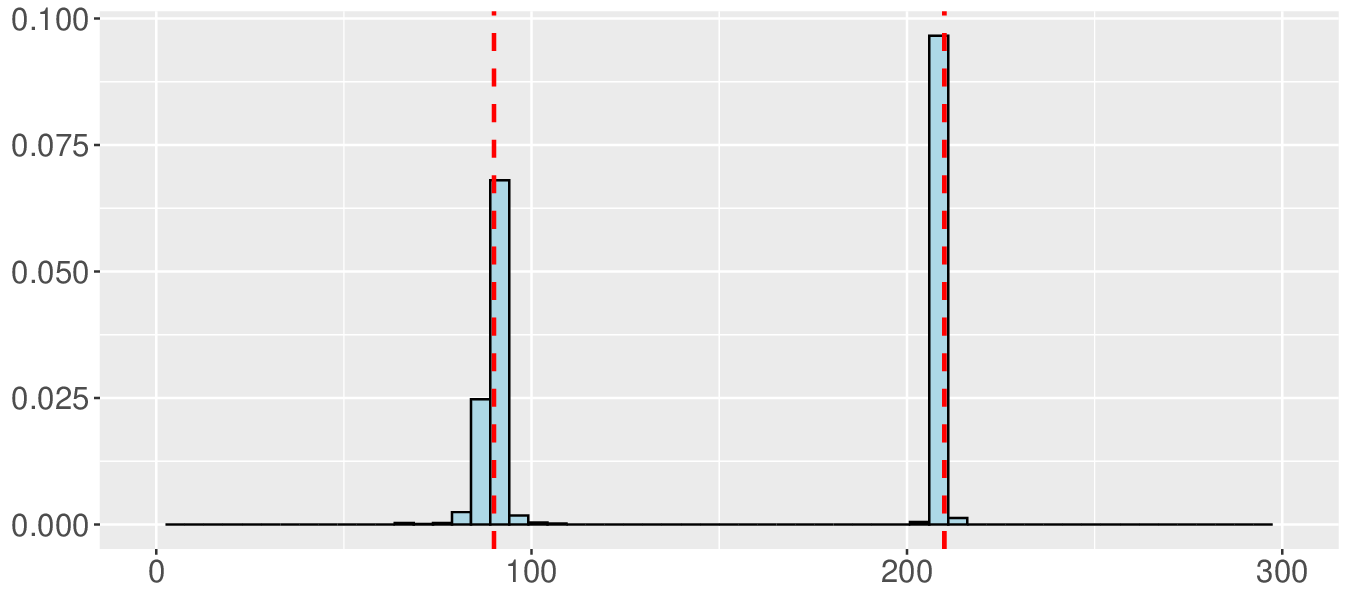}
	\includegraphics[width=0.45\linewidth]{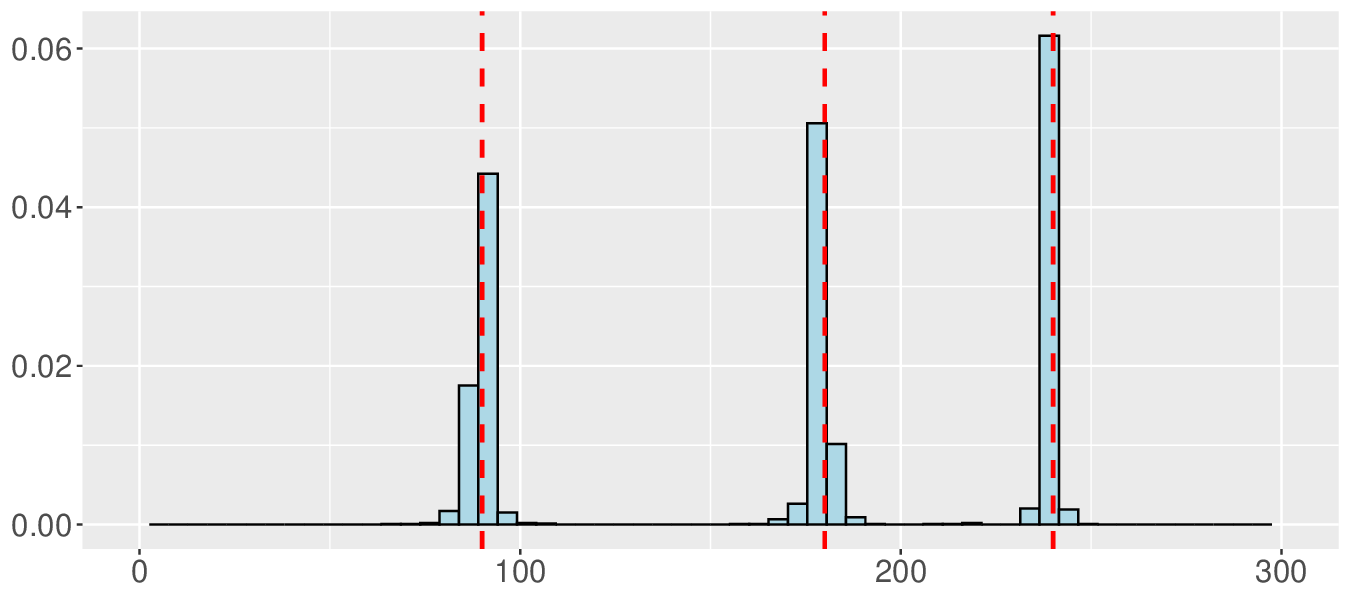}
	\includegraphics[width=0.45\linewidth]{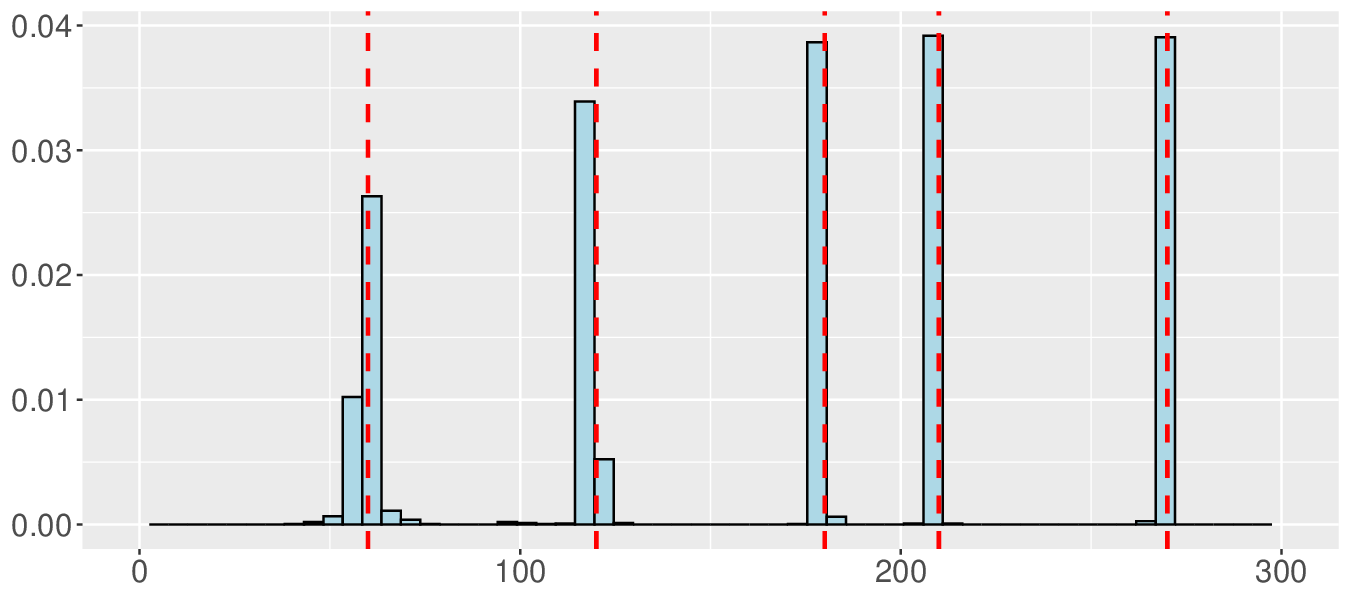}
	\caption{Frequency of $n_{(n,h)}^{max}$ with $N=300$ for the i.i.d. error. Top left: $H_{A,1}$; Top right: $H_{A,2}$; Bottom left: $H_{A,3}$; Bottom right: $H_{A,4}$.}
	\label{fig:cploc_iid}
\end{figure}

\subsection{Dependent functional time series}\label{sec:dep}
In the dependent DGP, the mean functions are the same to those specified in the independent case, but the error process $\{\varepsilon_n\}_{n=1}^N$ follows a functional autoregressive (FAR) model,
$$
	\varepsilon_n(\tau)   = \int \psi(\tau, s) e_{n-1}(s) d s + e_n (\tau),  \;\; \tau \in [0,1], \; n=1,...,N,  
$$
where the innovation process is
$$
	e_n (\tau) = \sum_{m=1}^{M} c_{n, m} \phi_m (\tau), 
$$
and $c_{n,m} \sim i.i.d. \; \mathcal{N}(0, 0.1^2)$, and $\{\phi_m\}$ being the same B-spline basis as before. The kernel function is specified as $\psi(\tau, s) = 1/4 s \tau$.

The simulation for the dependent case is identical to that in Section \ref{sec:ind}, except that the covariance estimator is replaced by an asymptotically consistent estimator of the long-run covariance for weakly dependent errors. \cite{carlstein1986use} proposed a subseries variance estimate for strong mixing processes by using non-overlapping blocking technique, which was later extended by \cite{wu:zhao:2007} for non-stationary time series. To adapt this approach for functional time series, we propose to use
\begin{equation}\label{eq:WZ_est}
	\hat C = \frac{1}{2(N/k_N-1)}\sum_{i=2}^{k_N} (A_i-A_{i-1})\otimes (A_i-A_{i-1}),
\end{equation}
where 
$$ A_i = k_N^{-1/2} \sum_{j=(i-1)k_N+1}^{ik_N} X_j, \qquad i=1,\dots, M,$$ 
and $k_N$ is a bandwidth parameter defining the size of $M$ blocks. In our implementation, we set $k_N=3$.\footnote{Alternative choices, such as $K_N=5,...,10$, have similar empirical performance.}

Table \ref{tab:H0_Size_FAR} reports the empirical size under the null hypothesis for the DGP with FAR errors, and Table \ref{tab:HA_Power_FAR} summarizes the corresponding results under the alternative. The test exhibits good size control and high power. Weak localization performs well even for small samples, while strong localization requires larger sample sizes to achieve comparable performance. Figure \ref{fig:cploc_far} shows the frequency of $n_{(n,h)}^{max}$ from a representative Monte Carlo repetition with $N=300$ under the four alternatives.

\begin{table}[htbp]
	\centering
	\caption{Empirical size based on the dependent DGP under the null}
	\begin{tabular}{lrccccccc}
		\toprule
		\toprule
		&       & \multicolumn{3}{c}{$\mathbf{N}^{all}$} &       & \multicolumn{3}{c}{$\mathbf{N}_N^{\theta}$} \\
		&       & 10\%  & 5\%   & 1\%   &       & 10\%  & 5\%   & 1\% \\
		\cmidrule{3-9}    \underline{Polynomial weight} &       &       &       &       &       &       &       &  \\
		$N=100$ &       & 0.051 & 0.023 & 0.004 &       & 0.045 & 0.024 & 0.005 \\
		$N=200$ &       & 0.091 & 0.042 & 0.008 &       & 0.094 & 0.036 & 0.007 \\
		$N=300$ &       & 0.084 & 0.040 & 0.010 &       & 0.080 & 0.043 & 0.015 \\
		&       &       &       &       &       &       &       &  \\
		\underline{Logarithmic weight} &       &       &       &       &       &       &       &  \\
		$N=100$ &       & 0.080 & 0.026 & 0.005 &       & 0.069 & 0.031 & 0.005 \\
		$N=200$ &       & 0.102 & 0.041 & 0.011 &       & 0.094 & 0.053 & 0.016 \\
		$N=300$ &       & 0.095 & 0.053 & 0.014 &       & 0.090 & 0.046 & 0.010 \\
		\bottomrule
		\bottomrule
	\end{tabular}%
	\label{tab:H0_Size_FAR}%
\end{table}%

\begin{table}[htbp]
	\centering
	\caption{Empirical performance based on the dependent DGP under the alternative}
	\begin{tabular}{lrccccccc}
		\toprule
		\toprule
		&       & \multicolumn{3}{c}{$\mathbf{N}^{all}$} &       & \multicolumn{3}{c}{$\mathbf{N}_N^{\theta}$} \\
		&       & Power & Weak Loc. & Strong Loc. &       & Power & Weak Loc. & Strong Loc. \\
		\cmidrule{3-9}    $\underline{H_{A,1}}$ &       &       &       &       &       &       &       &  \\
		$N=100$ &       & 1.000 & 1.000 & 1.000 &       & 1.000 & 1.000 & 1.000 \\
		$N=200$ &       & 1.000 & 0.999 & 1.000 &       & 1.000 & 1.000 & 1.000 \\
		$N=300$ &       & 1.000 & 1.000 & 1.000 &       & 1.000 & 0.999 & 1.000 \\
		&       &       &       &       &       &       &       &  \\
		$\underline{H_{A,2}}$ &       &       &       &       &       &       &       &  \\
		$N=100$ &       & 1.000 & 1.000 & 0.867 &       & 1.000 & 1.000 & 0.887 \\
		$N=200$ &       & 1.000 & 1.000 & 0.999 &       & 1.000 & 1.000 & 0.999 \\
		$N=300$ &       & 1.000 & 1.000 & 1.000 &       & 1.000 & 0.999 & 1.000 \\
		&       &       &       &       &       &       &       &  \\
		$\underline{H_{A,3}}$ &       &       &       &       &       &       &       &  \\
		$N=100$ &       & 1.000 & 1.000 & 0.047 &       & 1.000 & 1.000 & 0.040 \\
		$N=200$ &       & 1.000 & 1.000 & 0.616 &       & 1.000 & 1.000 & 0.651 \\
		$N=300$ &       & 1.000 & 1.000 & 0.987 &       & 1.000 & 1.000 & 0.975 \\
		&       &       &       &       &       &       &       &  \\
		$\underline{H_{A,4}}$ &       &       &       &       &       &       &       &  \\
		$N=100$ &       & 1.000 & 1.000 & 0.001 &       & 1.000 & 1.000 & 0.003 \\
		$N=200$ &       & 1.000 & 1.000 & 0.260 &       & 1.000 & 1.000 & 0.286 \\
		$N=300$ &       & 1.000 & 1.000 & 0.955 &       & 1.000 & 1.000 & 0.963 \\
		\bottomrule
		\bottomrule
	\end{tabular}%
	\label{tab:HA_Power_FAR}%
\end{table}%

\begin{figure}
	\centering
	\includegraphics[width=0.45\linewidth]{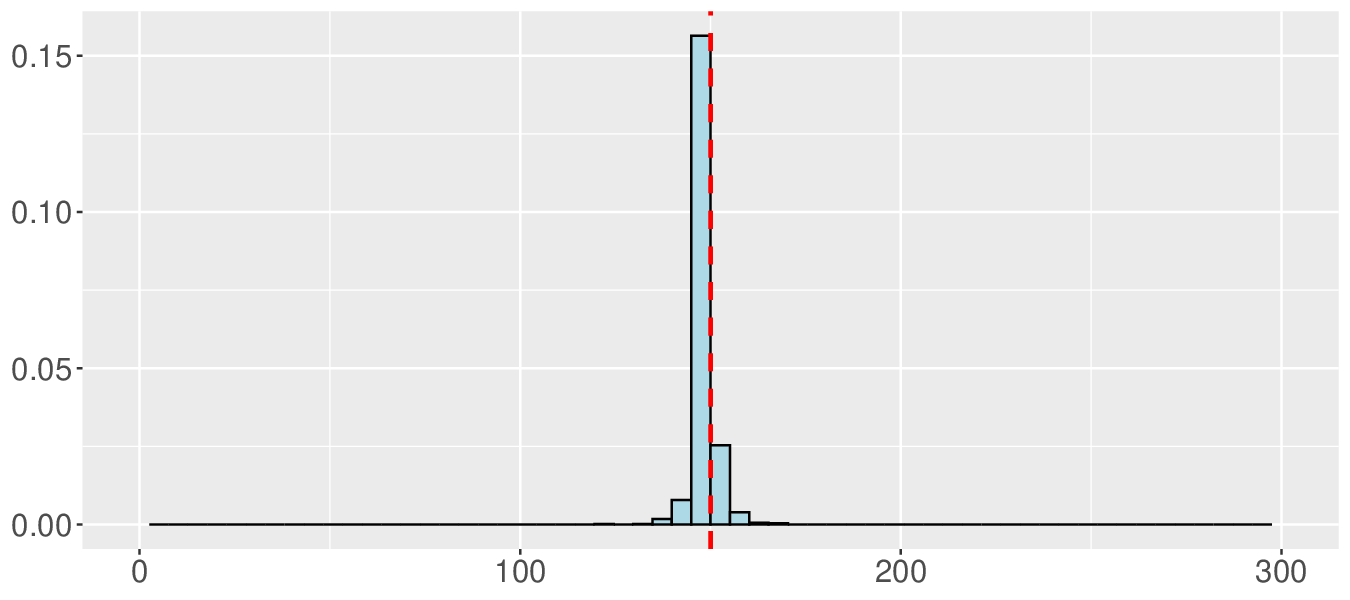}
	\includegraphics[width=0.45\linewidth]{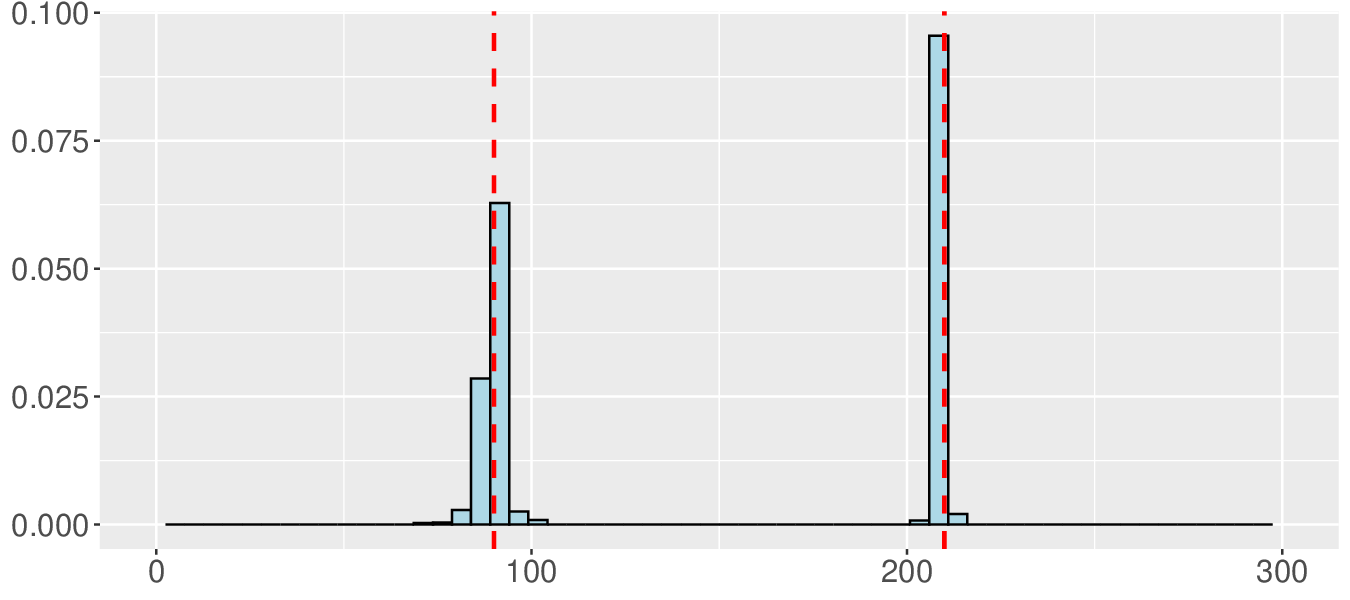}
	\includegraphics[width=0.45\linewidth]{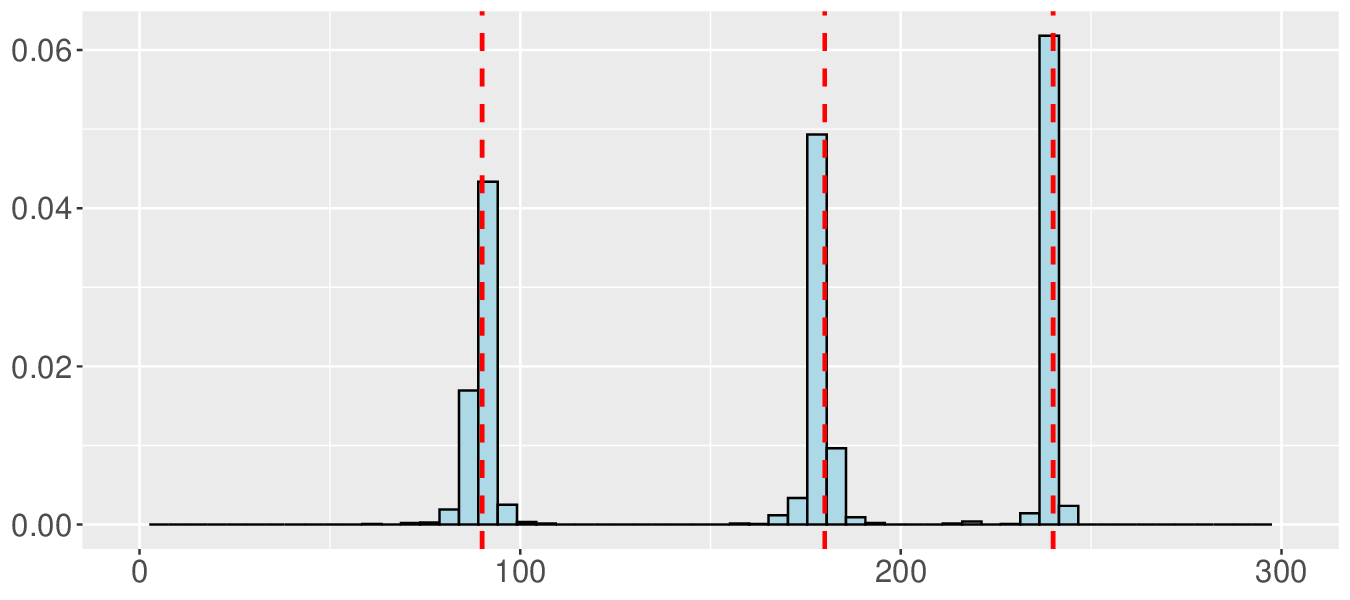}
	\includegraphics[width=0.45\linewidth]{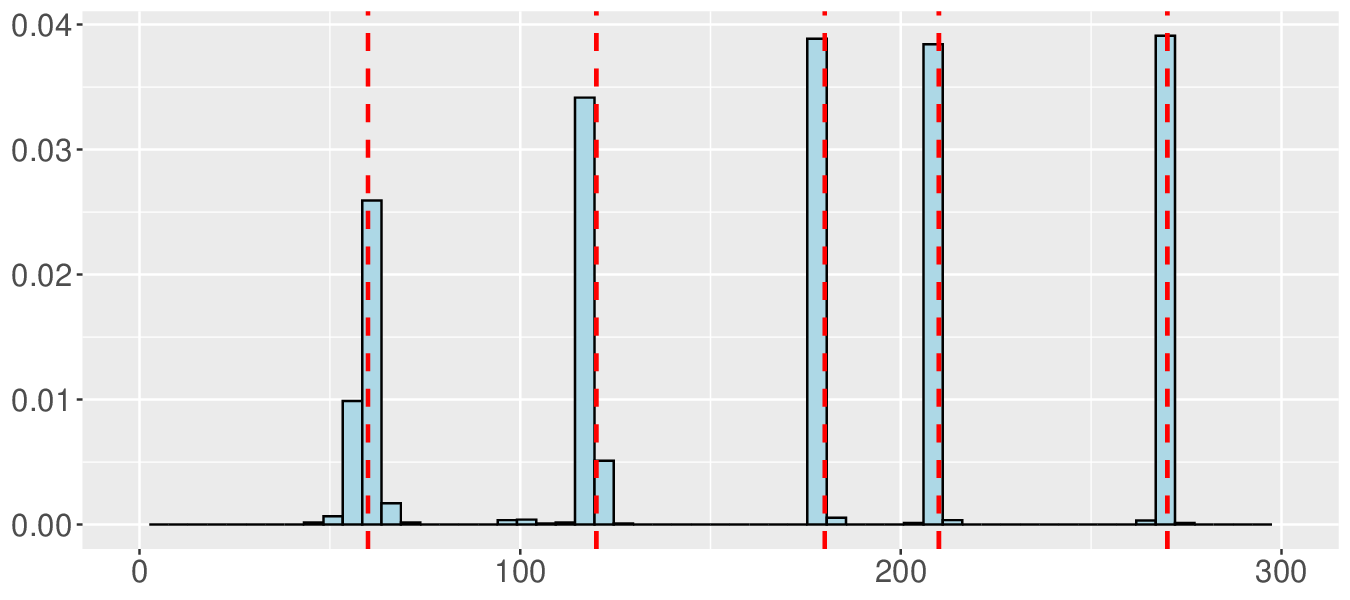}
	\caption{Frequency of $n_{(n,h)}^{max}$ with $N=300$ for the dependent DGP. Top left: $H_{A,1}$; Top right: $H_{A,2}$; Bottom left: $H_{A,3}$; Bottom right: $H_{A,4}$.}
	\label{fig:cploc_far}
\end{figure}

\subsection{Distribution functions in a data panel}
We generate distribution functions in a data panel under the null $H_{0}^*$ (when $K=0$) as
$$
F_n \sim \mathcal{N}(0,1) \;\; \mbox{ for } n=1,...,N.
$$
Under the alternative, we consider two scenarios:
\begin{itemize}
	\item $H_{A,1}^*$ (when $K=2$): Changes within the Gaussian family,
	\begin{align*}
		F^{(1)} \sim \mathcal{N}(0,1^2), &\; F^{(2)} \sim \mathcal{N}(0.05,1^2), \; F^{(3)} \sim \mathcal{N}(0.05,1.05^2), \\
		\text{and }  & c_1 = \lfloor 0.3N \rfloor, c_2 = \lfloor 0.7N \rfloor.
	\end{align*}
	This scenario introduces a change in the mean, followed by a change in the variance.
	\item $H_{A,2}^*$ (when $K=2$): Changes between distributional families, 
	\begin{align*}
		F^{(1)} \sim \mathcal{N}(0,1^2), &\; F^{(2)} \sim t(df=10), \; F^{(3)} \sim \mbox{skewed } t(skew=0.05,df=10), \\
		\text{and }  & c_1 = \lfloor 0.3N \rfloor, c_2 = \lfloor 0.7N \rfloor.
	\end{align*}
	This scenario changes from a Gaussian to a Student's $t$-distribution, and subsequently to a skewed $t$-distribution.
\end{itemize}

After generating the data, we obtain the empirical cumulative distribution functions $\widehat{F}_n, n=1,\dots,N$.  As detailed in Section \ref{sec3}, the underlying error $\varepsilon_{n}$
is independent but non-stationary under the alternative, as the covariance structure of $\varepsilon_{n}$
is dependent on $F_n$. However, at each time point $n$, we can approximate the instantaneous covariance by its natural empirical counterpart,
$$
	\widehat{C}_n(s,t) = \widehat{F}_n(\min(s,t)) - \widehat{F}_n(s)\widehat{F}_n(t), \qquad s,t \in \mathbb{R}. 
$$ 

To obtain the critical threshold $q$, we propose to use Algorithm \ref{alg3} for distributions functions. In this algorithm, at each time point $n$, resampled observations are generated based on the estimated covariance $\widehat{C}_n$. The threshold $q$ is determined as the $(1-\alpha)$-quantile of the bootstrapped $\tilde{L}^{\rho}$.
Finally, the theoretical framework in Section \ref{sec3} requires a continuous weight function $w(x)$. For the simulation, we employ a flattop function with exponential tails:
$$
w(x) = \begin{cases}
	\exp(x+A), \;\; &x < -A,\\
	1, \;\; &x \in \left[-A, A\right], \\
	\exp(A-x), \;\; &x > A,\\
\end{cases}
$$
where $A$ is a parameter that we set to be 2.

\begin{algorithm}
	\caption{Bootstrapping procedure to obtain critical threshold for distribution functions} \label{alg3}
	\begin{algorithmic}[0]
		\State \textbf{Input:} $\left\lbrace \widehat{C}_n\right\rbrace_{n=1}^N $, $\mathbf{N} \in \mathbf{N}^{all}$, $B$, $\alpha$ 
		\State \textbf{Output:} $q$
		\Function{Boot-ECDF}{$\left\lbrace \widehat{C}_n\right\rbrace_{n=1}^N $} 
		\For{$n=1,\ldots,N$}
		\State Perform eigenvalue decomposition on $\widehat{C}_n$ by $\widehat{C}_n = Q_n \cdot \Lambda_n  \cdot Q_n^\T$
		\State Take the square root of $\Lambda_n$ and get  ${\Lambda}_{n,root}$
		\State Obtain the square root of the matrix $\widehat{C}_{n,root} = Q_n \cdot {\Lambda^{*}}_{root} \cdot Q_n^\T$
		\State \parbox[t]{\dimexpr\linewidth-\algorithmicindent}{\hangindent=1.5em  Generate a random vector  $Z_n \sim \mathcal{N}({0}, {Id})$, where ${Id}$ is the $D$-variate identity matrix}
		\State \parbox[t]{\dimexpr\linewidth-\algorithmicindent}{\hangindent=1.5em  Compute $\tilde{\epsilon}_n = \widehat{C}_{n,root} \cdot Z_n $}
		\EndFor
		
		\State Calculate 
		$$
		\tilde{L}^{\rho} = \sup_{(n,h)\in \mathbf{N}} \dfrac{\left\| \sum_{\ell=n-h-1}^{n} \tilde{\varepsilon}_n - \sum_{\ell=n+1}^{n+h} \tilde{\varepsilon}_n \right\| }{N^{1/2}\rho(h/N)}
		$$   
		\Return $\tilde{L}^{\rho}$
		\EndFunction
		\State \textbf{Repeat:} BOOT-ECDF($\left\lbrace \widehat{C}_n\right\rbrace_{n=1}^N $)  for $B$ times to obtain $\{\tilde{L}^{\rho}_{(b)}\}_{b=1}^B$.
		\State \textbf{Set:} $q$ as the empirical $(1-\alpha)$-quantile of $\{\tilde{L}^{\rho}_{(b)}\}_{b=1}^B$.
	\end{algorithmic}
\end{algorithm}

The empirical size is reported in Table \ref{tab:H0_Dist}, and the performance under the alternatives is summarized in Table \ref{tab:HA_Dist}.  The procedure exhibits good size control under $H_0^*$ and high power under $H_{A,1}^*$ and $H_{A,2}^*$. Consistent with the previous DGPs, weak localization rates are high even for small $N$. In contrast, strong localization requires larger sample sizes to achieve satisfactory level. Finally, Figure \ref{fig:cploc_dist} plots the frequency of $n_{(n,h)}^{max}$ for a representative simulation ($N=300$) under both alternative scenarios.

\begin{table}[htbp]
	\centering
	\caption{Empirical size based on distribution functions under the null}
	\begin{tabular}{lrlllllll}
		\toprule
		\toprule
		&       & \multicolumn{3}{c}{$\mathbf{N}^{all}$} &       & \multicolumn{3}{c}{$\mathbf{N}_N^{\theta}$} \\
		&       & 10\%  & 5\%   & 1\%   &       & 10\%  & 5\%   & 1\% \\
		\cmidrule{3-9}    \underline{Polynomial weight} &       &       &       &       &       &       &       &  \\
		$N=100$ &       & 0.090 & 0.046 & 0.013 &       & 0.099 & 0.048 & 0.011 \\
		$N=200$ &       & 0.090 & 0.046 & 0.007 &       & 0.104 & 0.051 & 0.016 \\
		$N=300$ &       & 0.096 & 0.053 & 0.014 &       & 0.114 & 0.066 & 0.013 \\
		&       &       &       &       &       &       &       &  \\
		\underline{Logarithmic weight} &       &       &       &       &       &       &       &  \\
		$N=100$ &       & 0.101 & 0.048 & 0.009 &       & 0.094 & 0.044 & 0.012 \\
		$N=200$ &       & 0.105 & 0.060 & 0.008 &       & 0.100 & 0.054 & 0.011 \\
		$N=300$ &       & 0.110 & 0.058 & 0.016 &       & 0.103 & 0.045 & 0.011 \\
		\bottomrule
		\bottomrule
	\end{tabular}%
	\label{tab:H0_Dist}%
\end{table}%

\begin{table}[htbp]
	\centering
	\caption{Empirical performance based on distribution functions under the alternative}
	\begin{tabular}{lrccccccc}
		\toprule
		\toprule
		&       & \multicolumn{3}{c}{$\mathbf{N}^{all}$} &       & \multicolumn{3}{c}{$\mathbf{N}_N^{\theta}$} \\
		&       & \multicolumn{1}{l}{Power} & \multicolumn{1}{l}{Weak Loc.} & \multicolumn{1}{l}{Strong Loc.} &       & \multicolumn{1}{l}{Power} & \multicolumn{1}{l}{Weak Loc.} & \multicolumn{1}{l}{Strong Loc.} \\
		\cmidrule{3-9}    $H_{A,1}$ &       &       &       &       &       &       &       &  \\
		$N=100$ &       & 0.993 & 0.992 & 0.187 &       & 0.991 & 0.987 & 0.181 \\
		$N=200$ &       & 1.000 & 0.998 & 0.745 &       & 1.000 & 0.998 & 0.729 \\
		$N=300$ &       & 1.000 & 1.000 & 0.980 &       & 1.000 & 1.000 & 0.984 \\
		&       &       &       &       &       &       &       &  \\
		$H_{A,2}$ &       &       &       &       &       &       &       &  \\
		$N=100$ &       & 1.000 & 0.999 & 0.790 &       & 1.000 & 0.995 & 0.803 \\
		$N=200$ &       & 1.000 & 0.997 & 0.997 &       & 1.000 & 0.999 & 0.999 \\
		$N=300$ &       & 1.000 & 0.999 & 1.000 &       & 1.000 & 0.999 & 1.000 \\
		\bottomrule
		\bottomrule
	\end{tabular}%
	\label{tab:HA_Dist}%
\end{table}%

\begin{figure}
	\centering
	\includegraphics[width=0.8\linewidth]{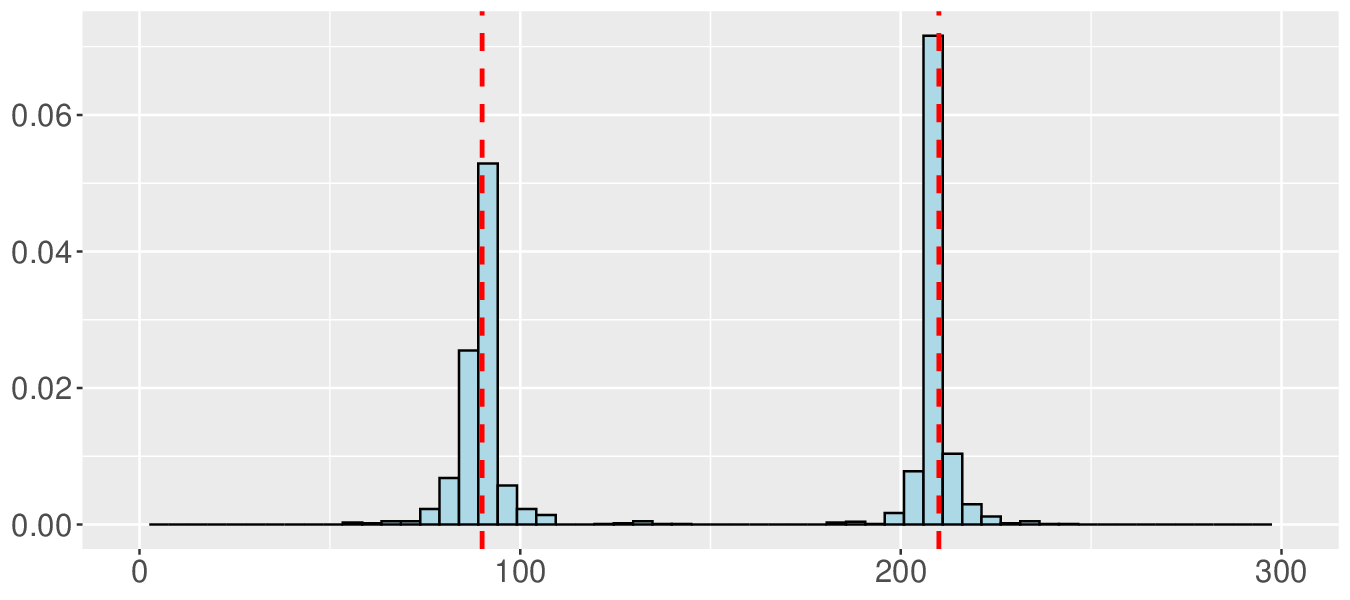}
	\includegraphics[width=0.8\linewidth]{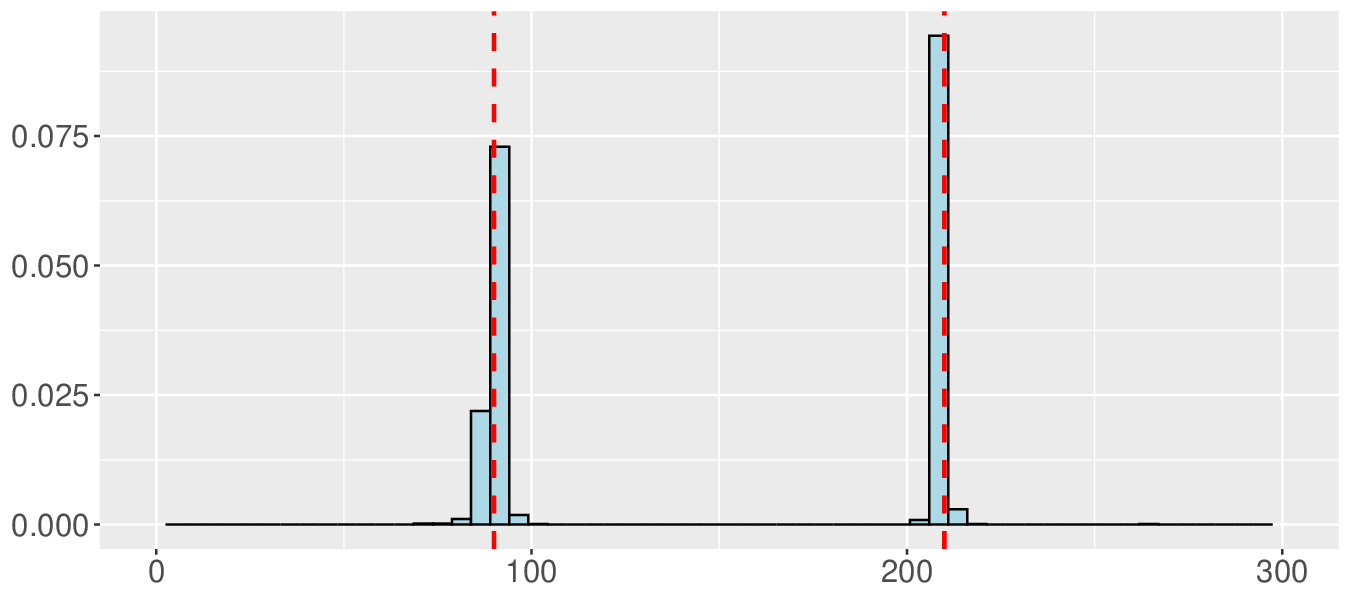}
	\caption{Frequency of $n_{(n,h)}^{max}$ with $N=300$. Top: $H_{A,1}^*$; Bottom: $H_{A,2}^*$.}
	\label{fig:cploc_dist}
\end{figure}

\section{Empirical  Applications}\label{sec:data}
In our empirical applications, we employ the proposed procedure of multiscale change point detection to the intraday curves of the Chicago Board Options Exchanged (CBOE) Volatility Index (VIX). The VIX index is a famous ``ready-to-use'' implied volatility, with real-time quote provided by the CBOE. It is a risk-neutral measure on the market's forward-looking expectation of 30-day (calendar day) volatility of the US stock market. It is derived by aggregating the weighted prices (mid-quote) of real-time of S\&P 500 index call and put options over a wide range of strike prices (out-of-money). The VIX index has gained substantial popularity in recent years, and it is now generally regarded as the primary benchmark for US stock market volatility, which is widely used by financial theorists, risk managers and volatility traders alike. The VIX index is often referred as the ``fear index'' in the financial media, and  its values above 30 indicate greater market fear and uncertainty, while values below 20 suggest stability.

Our dataset consists of high-frequency intraday observations of the VIX obtained from FirstRate Data\footnote{\url{https://firstratedata.com/}, assessed on 8 Nov 2025.}. With a focus on the recent decade, the sample period is chosen to be from January 4, 2016 to October 31, 2025.\footnote{VIX1Y starts from 2023-04-25, and VIX1D starts from 2018-04-26.} After excluding days with substantial missing data, there are 2,451 trading days in total. The CBOE disseminates VIX values during regular trading hours (9:30 a.m.–4:15 p.m. Eastern Time) and, since April 2016, also during global trading hours (3:15 a.m.–9:25 a.m. Eastern Time). We choose to focus on the regular trading hours, where the liquidity of the underlying S\&P 500 options is substantially higher, with a more reliable measurement of implied volatility.

The functional time series of intraday VIX curves are constructed from the high-frequency data between 9:30 a.m. and 4:15 p.m., with the time domain rescaled to $[0,1]$. Given the well-documented volatility clustering in financial markets, the error process is highly likely to be weakly dependent, consistent with the framework considered in Section \ref{sec:dep}. Consequently, we employ the long-run covariance estimator of \eqref{eq:WZ_est}. Then, this estimator is used within the bootstrap procedure (Algorithm \ref{alg2}) to determine the critical threshold at the 5\% significance level. Subsequently, Algorithm \ref{alg1} is applied with the obtained threshold to detect and localize multiple change points.

There are several choices of intraday sampling frequency. In our main analysis, we select a 5-minute sampling frequency, as this granularity provides a good balance between capturing intraday signals and mitigating the impact of market microstructure noises \cite{barndorff2002econometric}. To check the robustness of our findings, we provide a sensitivity analysis based on 1-minute and 30-minute frequencies. Beyond the standard 30-day horizon of the VIX, the CBOE employs the same methodology to construct similar volatility indices over alternative horizons, including VIX1D, VIX9D, VIX3M, VIX6M, and VIX1Y, corresponding to  forward-looking expectations over 1-day, 9-day, 3-month, 6-month, 1-year, respectively. To provide a comprehensive view of the market expectations across various horizons, we also report the results of detected changes for these related volatility indices.

\subsection{Main analysis}
Figure \ref{fig:vix5min3d} displays the intraday curves of the VIX index constructed from 5-minute frequency data. After applying the proposed multiscale procedure, we obtain the detected change points ($n_{(n,h)}^{max}$) shown in Table \ref{tab:VIX_5min_Changes}. The parameter $h$, which determines the confidence intervals ($[n^{\max}_{(n,h)}-h+1, n^{\max}_{(n,h)}+h]$), provides additional insight into the nature of each change. Smaller values of $h$ correspond to abrupt and short-lived breaks, such as those observed during the COVID-19 outbreak, whereas larger $h$ values indicate more gradual or persistent transitions, as seen during periods of monetary policy adjustment. The results show a strong correspondence between the detected change points and widely recognized episodes of volatility regime shifts in the US stock market. In the following, we highlight some prominent changes.

The initial set of detected changes during 2016--2018 aligns with major political and market events. There is a change point detected exactly on the date of the US Presidential Election (2016-11-08) that Donald Trump defeated Hillary Clinton to be elected 45th US President. This event triggered an overnight spike in VIX futures followed by a sharp intraday reversal during the regular session. The detection on 2018-02-01 (with $h=10$) results in a confidence interval [2018-01-19, 2018-02-15] that correctly envelops the ``Volmageddon'' event on 2018-02-05, when a VIX spike induced the liquidation of inverse-volatility exchange-traded products.

Since the outbreak of COVID-19, the procedure detects a sequence of change points: the beginning of the COVID-19 crisis (detected 2020-02-21), the peak of market fear (detected 2020-03-10) with a confidence interval [2020-03-04, 2020-03-19] that covers three significant crashes, and the shift to a new high-volatility regime following central bank interventions (detected 2020-04-02). The last detection in this period is on 2020-11-10, immediately following the Pfizer vaccine announcement (2020-11-09) that caused a fundamental change in volatility expectations.

The changes in 2022-2023 period are related to monetary policy adjustment. The change on 2022-03-16 coincides with the first U.S. Federal Reserve interest rate hike of the cycle.  Its confidence interval [2022-02-24, 2022-04-06] begins exactly on the date of the Russia-Ukraine conflict, which had introduced significant uncertainty to global energy markets. The detection on 2022-09-13 aligns with a major market sell-off following a high US CPI report. The detection on 2023-05-12  corresponds to the U.S. regional banking crisis, following the failures of Silicon Valley Bank and First Republic Bank.

The most recent detections correspond to significant geopolitical and trade-related announcements. A notable change point is detected on 2025-04-03, immediately following the ``Liberation Day'' tariff announcements on 2025-04-02. The subsequent detection on 2025-05-09 identifies a period of sustained uncertainty related to US-China trade negotiations.

\begin{figure}
	\centering
	\includegraphics[width=1\linewidth]{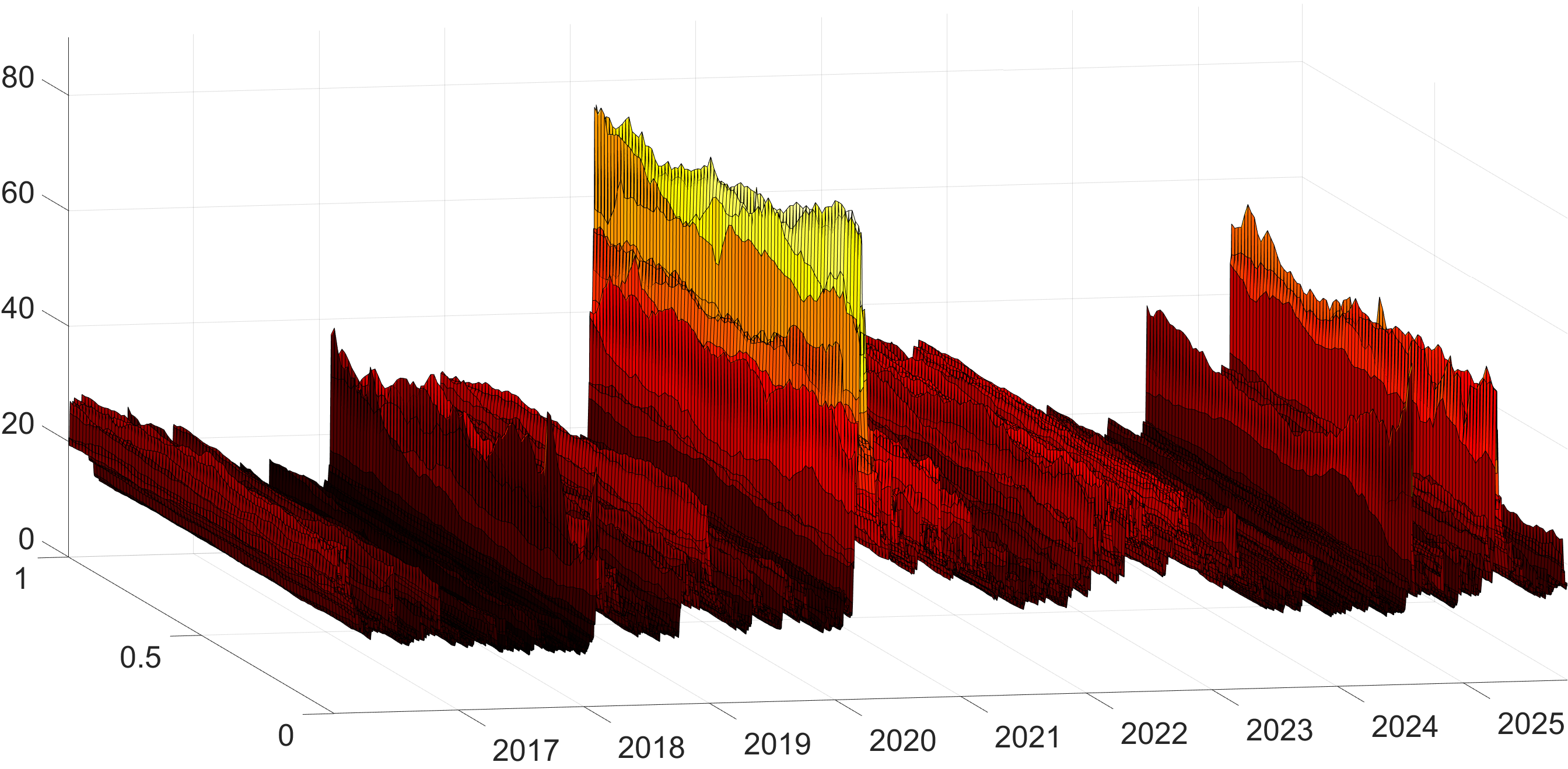}
	\caption{Intraday VIX Curves in 5-minute frequency}
	\label{fig:vix5min3d}
\end{figure}

\begin{table}[htbp]
	\begin{center}
		\caption{Detected changes for the intraday VIX curves in 5-minute frequency\label{tab:VIX_5min_Changes}}
		\resizebox{\columnwidth}{!}{ \begin{tabular}{clcrlp{0.7cm}clcrl}
				\toprule
				\toprule
				$n_{(n,h)}^{max}$ & $h$   & $\gamma(n, h)$ & $[n^{\max}_{(n,h)}-h+1,$ &  $n^{\max}_{(n,h)}+h]$ &       & $n_{(n,h)}^{max}$ & $h$   & $\gamma(n, h)$ & $[n^{\max}_{(n,h)}-h+1,$ &  $n^{\max}_{(n,h)}+h]$ \\
				\cmidrule{1-5}\cmidrule{7-11}    2016-02-29 & 23    & 91.40 & [2016-01-27, & 2016-04-04] &       & 2020-11-10 & 17    & 92.49 & [2020-10-19, & 2020-12-07] \\
				2016-11-08 & 106   & 94.63 & [2016-06-10, & 2017-04-13] &       & 2021-03-12 & 34    & 94.27 & [2021-01-25, & 2021-04-30] \\
				2018-02-01 & 10    & 93.04 & [2018-01-19, & 2018-02-15] &       & 2022-01-20 & 23    & 92.90 & [2021-12-17, & 2022-02-23] \\
				2018-05-04 & 34    & 91.90 & [2018-03-19, & 2018-06-22] &       & 2022-03-16 & 15    & 93.49 & [2022-02-24, & 2022-04-06] \\
				2018-10-08 & 17    & 91.38 & [2018-09-14, & 2018-10-31] &       & 2022-09-13 & 23    & 91.13 & [2022-08-11, & 2022-10-14] \\
				2019-01-23 & 23    & 92.23 & [2018-12-18, & 2019-02-26] &       & 2022-12-27 & 49    & 95.88 & [2022-10-17, & 2023-03-09] \\
				2019-10-11 & 49    & 91.67 & [2019-08-05, & 2019-12-23] &       & 2023-05-12 & 45    & 92.22 & [2023-03-10, & 2023-07-20] \\
				2020-02-21 & 7     & 96.61 & [2020-02-12, & 2020-03-03] &       & 2023-11-13 & 37    & 91.89 & [2023-09-22, & 2024-01-09] \\
				2020-03-10 & 5     & 93.61 & [2020-03-04, & 2020-03-19] &       & 2024-07-17 & 28    & 91.59 & [2024-06-05, & 2024-08-26] \\
				2020-04-02 & 9     & 104.16 & [2020-03-23, & 2020-04-16] &       & 2025-04-03 & 6     & 102.49 & [2025-03-27, & 2025-04-11] \\
				2020-07-15 & 23    & 93.37 & [2020-06-12, & 2020-08-17] &       & 2025-05-09 & 19    & 95.77 & [2025-04-14, & 2025-06-06] \\
				\bottomrule
				\bottomrule
		\end{tabular}}%
	\end{center}
	{\footnotesize Note: The critical threshold based on the bootstrap procedure is 91.09 at the 5\% significance level.  }
\end{table}%

\subsection{Sensitivity analysis}
To assess the sensitivity of the proposed procedure to the choice of intraday sampling frequency, we repeat the analysis using the VIX intraday curves constructed from 1-minute and 30-minute data. Figure \ref{fig:res_3freq} displays the detected change points across the three frequencies (1-, 5-, and 30-minute). The results show a high degree of alignment, with all three frequencies yielding 22 detected changes. This consistency indicates that the proposed multiscale method is robust to the choice of intraday sampling resolution. Minor discrepancies in the precise timing of a few detections arise primarily during high volatility periods, such as early 2020, reflecting differences in the degree of noise smoothing at each sampling frequency. Overall, the general alignment of detected changes across frequencies confirms the robustness of the results on the change point detection in the intraday VIX curves.

\begin{figure}
	\centering
	\includegraphics[width=1\linewidth]{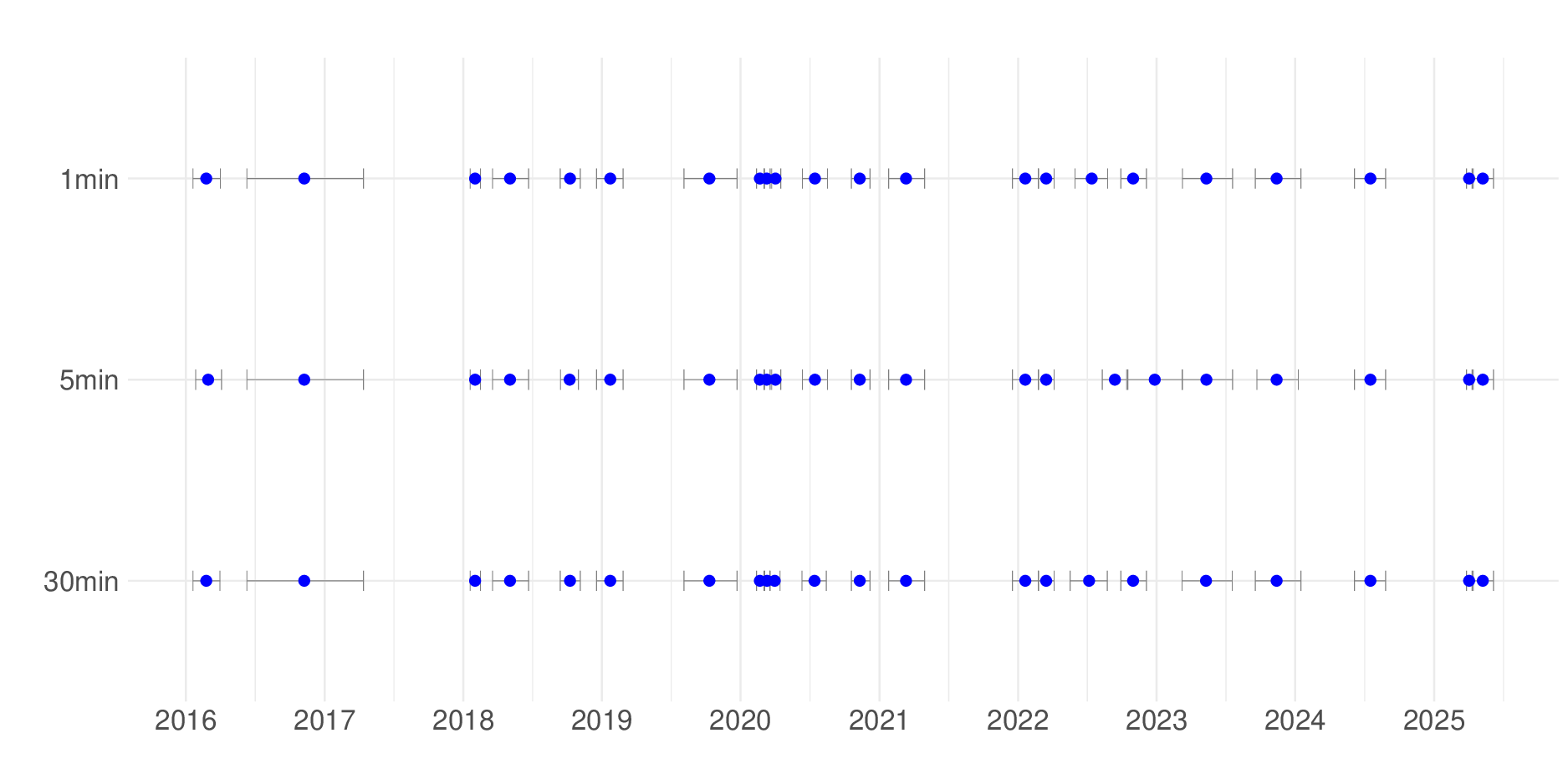}
	\caption{Detected changes for VIX of three intraday frequencies}
	\label{fig:res_3freq}
\end{figure}

\subsection{Volatility index of other horizons}
The standard VIX index reflects the market’s forward-looking expectation of 30-day volatility. To investigate change points for the volatility expectation at other horizons, we extend our analysis to a family of related volatility indices constructed using the same methodology but over alternative horizons. Specifically, we apply the proposed multiscale procedure to the intraday volatility curves (based on 5-minute data) of  VIX1D (1-day), VIX9D (9-day), VIX3M (3-month), VIX6M (6-month), and VIX1Y (1-year). Figure \ref{fig:res_6horizon} displays the detected change points across these indices. The results show that the timing of change points is broadly synchronized across horizons, suggesting that volatility changes tend to occur simultaneously in the full spectrum on the volatility term structure, including short-term, medium-term, and long-term horizons.

\begin{figure}
	\centering
	\includegraphics[width=1\linewidth]{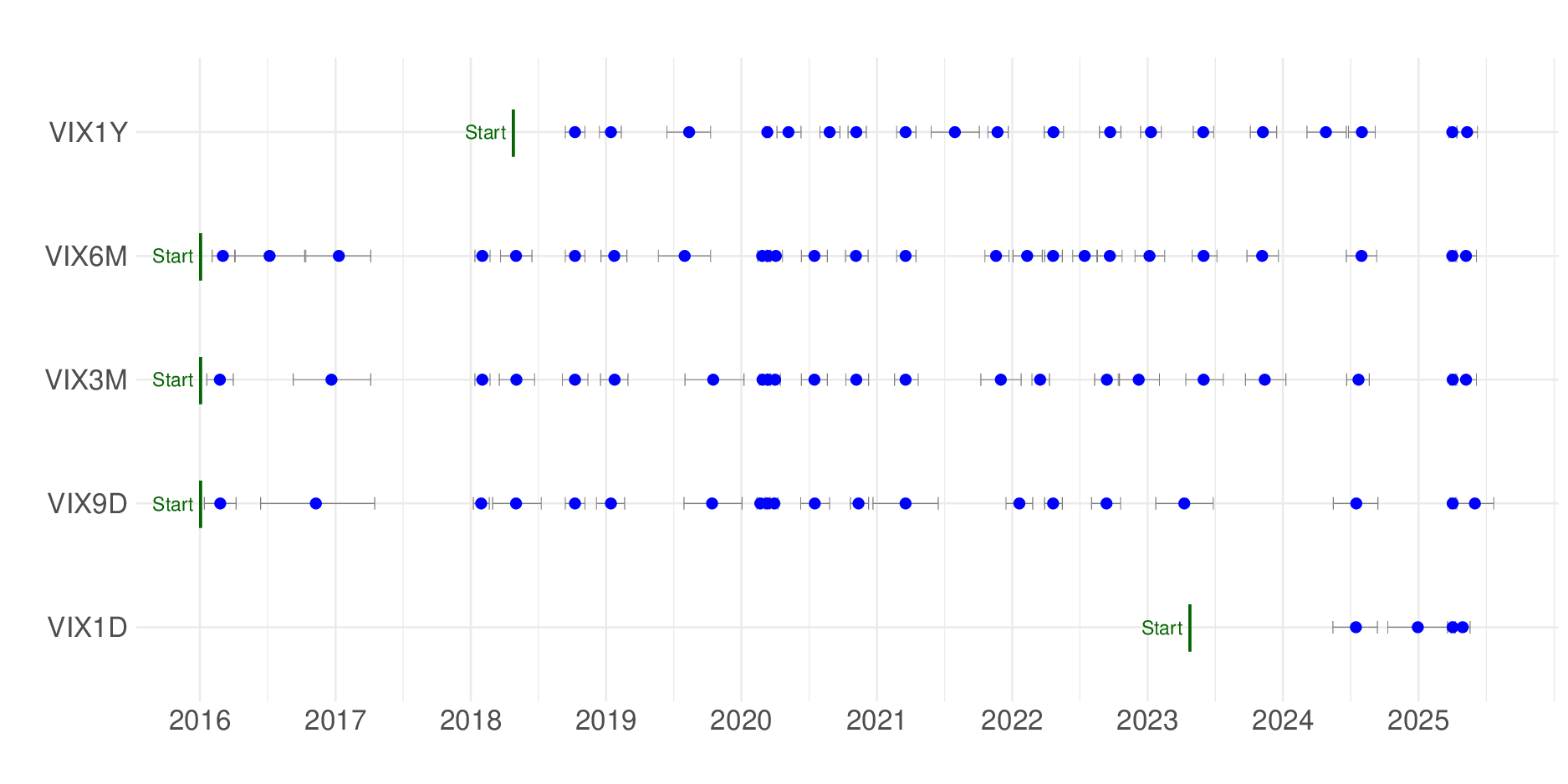}
	\caption{Detected changes for volatility indices of six horizons}
	\label{fig:res_6horizon}
\end{figure}

\bigskip

{\bf Acknowledgments. }
This work was supported by TRR 391 \textit{Spatio-temporal Statistics for the Transition of Energy and Transport} (Project number 520388526) funded by the Deutsche Forschungsgemeinschaft (DFG, German Research Foundation). Tim Kutta's work has been partially funded by AUFF grants 47331 and 47222.

\small\renewcommand{\baselinestretch}{0.95}
\putbib

 $ $ \newpage
\end{bibunit}
\begin{bibunit}
\setcounter{page}{1}
\section{Proofs and technical details}

The appendix is dedicated to the proofs of our main results. Throughout this section we use a notation to compare the size of two sequences of positive numbers $(a_N)_{N \in \mathbb{N}}$ and $(b_N)_{N \in \mathbb{N}}$. We write $a_N \lesssim b_N$, if there exists a non-random constant $C>0$, independent of $N$ such that $a_N \leq C\, b_N$ for all $N\ge 1$.

\subsection{Proof of Theorem \ref{theo1}} \label{sec:proofs1}

\textbf{Preliminary approximations } According to Assumption \ref{ass_1}, we know that on the Hölder space $\mathcal{C}^\rho([0,1])$ the weak convergence $P_N \to W$ holds, where $W$ is a centered Gaussian process. Notice that the Hölder space itself is again a Banach space and also separable \citep[see][]{rackauskas:suquet:2009}.
Using Skorohod's representation theorem (see \cite{billingsley:1999}, Theorem 6.7), we can move to a probability space where the convergence $P_N \to W$ holds a.s. and we fix the outcome so that $P_N \to W$ holds deterministically 
in the Hölder space.  Next, we define the mean functions
\begin{align} \label{e:def:f}
f(n,h)= \sum_{i=n-h+1}^n \mathbb{E}X_i - \sum_{i=n+1}^{n+h} \mathbb{E}X_i
\end{align}
and notice that we can therewith rewrite the scan statistic in \eqref{e:gam} as 
\begin{align}\label{e:def:gam}
\gamma(n,h) = \frac{\|[P_N(n/N)-P_N((n-h)/N)]-[P_N((n+h)/N)-P_N(n/N)]+f(n,h)/\sqrt{N}\|}{ \rho(h/N)}.
\end{align}
We also define the Gaussian version of the function $\gamma$ as
\begin{align*}
\tilde \gamma(n,h) := \frac{\|[W(n/N)-W((n-h)/N)]-[W((n+h)/N)-W(n/N)]+f(n,h)/\sqrt{N}\|}{ \rho(h/N)}.
\end{align*}
If there is no change in the interval characterized by $(n,h)$, it follows that $\gamma(n,h)=\gamma_0(n,h)$ with 
\begin{align}\label{e:def:gam0}
\gamma_0(n,h):=\frac{\|[P_N(n/N)-P_N((n-h)/N)]-[P_N((n+h)/N)-P_N(n/N)]\|}{ \rho(h/N)}.
\end{align}
We define $\tilde \gamma_0$ analogously, but with $P_N$ replaced by $W$. 
Using the triangle inequality for the  norm of the Banach space twice (first the reverse, then the usual version) it follows that
\begin{align}\label{e:maxgg0}
\max_{(n,h) \in \mathbf{N}_N} |\tilde \gamma_0(n,h)-\gamma_0(n,h)| \le
& \max_{(n,h) \in \mathbf{N}^{all}} |\tilde \gamma_0(n,h)-\gamma_0(n,h)| \\
\le & 2 \max_{(n,h) \in \mathbf{N}^{all}} \frac{\|[W-P_N](n/N)-[W-P_N]((n-h)/N)\|}{\rho(h/N)}\nonumber\\
\le & 2 \sup_{0 \le x <y \le 1}  \frac{\|[W-P_N](y)-[W-P_N](x)\|}{\rho(y-x)}
\le 2 \|W-P_N\|_\rho =o(1). \nonumber
\end{align}
In the second to last step, we have used the definition of the Hölder norm, see eq. \eqref{e:Hnorm}. In the last step, we have used convergence  of $P_N$ to $W$ in the Hölder space.

\textbf{Proof of Theorem \ref{theo1}} The algorithm MultiScan$(\emptyset, \mathbf{N}_N, q_{1-\alpha} )$ can only include a pair $(n,h) $ in its output $\mathbf{C}$, if $\gamma(n,h)>q_{1-\alpha} $. If $(n,h)$ is falsely included, this means that there is no change in $\{ n-h+1,\ldots ,n+h\}$ and hence, we have $\gamma(n,h)=\gamma_0(n,h) $.
Obviously we have the implication
\begin{align} \label{e:Sinq}
\exists (n,h) \in \mathbf{N}_N \,\, \textnormal{falsely included in} \,\, \mathbf{C} \quad\Rightarrow \quad\max_{(n,h) \in \mathbf{N}_N}\gamma_0(n,h)>q_{1-\alpha} 
.
\end{align}
By \eqref{e:maxgg0} we have
\[
\Big |\max_{(n,h) \in \mathbf{N}_N}\gamma_0(n,h)-\max_{(n,h) \in \mathbf{N}_N}\tilde \gamma_0(n,h) \Big |=o(1).
\]
Thus we can focus on the convergence of $\max_{(n,h) \in \mathbf{N}_N}\tilde \gamma_0(n,h)$ in the following discussion. By Lemma \ref{lem:conv}  (which is stated below) it follows that 
\begin{align} \label{e:Lrcon}
\max_{(n,h) \in \mathbf{N}_N}\tilde \gamma_0(n,h) \to L^\rho ,
\end{align}
where  $L^\rho$ is defined in \eqref{e:defLrho}. The above almost sure convergence implies, of course, convergence in distribution.
Now, since $L^\rho$  has a continuous distribution function (see Lemma \ref{lem:cont} below), it follows that
\begin{align} \label{e:Sineq}
\begin{split}
\mathbb{P}\big(\exists (n,h) \in \mathbf{N}_N \,\, \textnormal{falsely included in} \,\, \mathbf{C} \big) & \le  \mathbb{P}\big(\max_{(n,h) \in \mathbf{N}_N}\gamma_0(n,h)>q_{1-\alpha} \big) \\
& \to \mathbb{P}(L^\rho > q_{1-\alpha})=\alpha.
\end{split}
\end{align}
In the case where no changes occur, it is is clear that the implication in \eqref{e:Sinq} becomes an equivalence and hence, the inequality sign in \eqref{e:Sineq} becomes an equality sign.   The proof will be completed by stating and proving  
Lemmas \ref{lem:conv} and \ref{lem:cont}.
\qed
$ $\\
\begin{lem} \label{lem:conv}
     Under the assumptions of Theorem \ref{theo1}, the convergence  in \eqref{e:Lrcon} follows.
\end{lem}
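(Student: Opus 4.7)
Throughout I work on the full-probability event from the Skorohod coupling already introduced in the proof of Theorem \ref{theo1}, on which $P_N \to W$ in $\mathcal{C}^\rho([0,1],\mathbb{B})$ pointwise, so that in particular $W$ itself lies in $\mathcal{C}^\rho$. It suffices to show the matching bounds $\limsup_N \max_{(n,h)\in\mathbf{N}_N}\tilde\gamma_0(n,h)\le L^\rho$ and $\liminf_N \max_{(n,h)\in\mathbf{N}_N}\tilde\gamma_0(n,h)\ge L^\rho$.

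The upper bound is essentially definitional: for every $(n,h)\in\mathbf{N}^{all}$ the pair $(n/N,h/N)$ lies in the triangle $\blacktriangle_{0,1}$, and $\tilde\gamma_0(n,h)$ coincides exactly with the integrand in the definition \eqref{e:defLrho} of $L^\rho$ evaluated at this point. Hence $\max_{(n,h)\in\mathbf{N}_N}\tilde\gamma_0(n,h)\le L^\rho$ holds pointwise for every $N$ and for either admissible choice of $\mathbf{N}_N$.

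For the matching lower bound I would rely on two continuity features of the functional
\[
\phi(x,y):=\frac{\|W(x+y)-2W(x)+W(x-y)\|}{\rho(y)}, \qquad (x,y)\in\blacktriangle_{0,1}.
\]
First, $W\in\mathcal{C}^\rho$ implies $\sup_x\phi(x,y)\to 0$ as $y\downarrow 0$ (since $\|W(x\pm y)-W(x)\|\le \eta(y)\rho(y)$ with $\eta(y)\to 0$), so $L^\rho$ is approximated arbitrarily well by $\phi(x^*,y^*)$ at interior points with $y^*$ bounded away from zero. Second, $\phi$ is jointly continuous at any such interior $(x^*,y^*)$, since $W$ is continuous and $\rho$ is continuous at positive arguments. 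Given $\epsilon>0$, pick $(x^*,y^*)$ with $y^*>0$ and $\phi(x^*,y^*)\ge L^\rho-\epsilon$. For $\mathbf{N}_N=\mathbf{N}^{all}$ set $n_N=\lfloor x^*N\rfloor$ and $h_N=\lfloor y^*N\rfloor$; then $(n_N,h_N)\in\mathbf{N}^{all}$ for large $N$, $(n_N/N,h_N/N)\to(x^*,y^*)$, and continuity of $\phi$ at this interior point yields $\tilde\gamma_0(n_N,h_N)\to\phi(x^*,y^*)\ge L^\rho-\epsilon$. Sending $\epsilon\downarrow 0$ closes the inequality in this case.

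\textbf{The main obstacle} is the pyramid case $\mathbf{N}_N=\mathbf{N}_N^\theta$: the admissible scales $h=\lfloor\theta^m\rfloor$ form a geometric sequence, so $h/N$ can match a prescribed $y^*$ only up to a multiplicative factor of $\theta$, and a naive interpolation no longer delivers exact convergence of $\tilde\gamma_0(n_N,h_N)$ to $\phi(x^*,y^*)$. The way through is to pair the uniform continuity of $\phi$ on compact subregions $\{y\ge\eta\}\cap\blacktriangle_{0,1}$ with the Hölder increment bound $\|W(x+y_1)-W(x+y_2)\|\le\|W\|_\rho\,\rho(|y_1-y_2|)$, and then use the a priori control $\rho(|y_1-y_2|)/\rho(y^*)\to 0$ as the grid is refined. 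This is the delicate refinement; the remainder of the argument, once the continuity of $\phi$ and the boundary behaviour at $\{y=0\}$ are in hand, is routine.
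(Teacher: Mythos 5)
Your upper bound and your treatment of the full index set $\mathbf{N}^{all}$ are correct and essentially identical to the paper's argument: after the Skorohod coupling one fixes an outcome with $P_N\to W$ in $\mathcal{C}^\rho$, observes that $\tilde\gamma_0(n,h)=f^W(n/N,h/N)$ with $f^W(x,y)=\|W(x+y)-2W(x)+W(x-y)\|/\rho(y)$ and that the rescaled grid lies inside $\blacktriangle_{0,1}$, which gives $\max_{(n,h)\in\mathbf{N}_N}\tilde\gamma_0(n,h)\le L^\rho$ for every $N$; the lower bound then comes from continuity of $f^W$ together with grid points approaching a (near-)maximizer. Your explicit use of the vanishing Hölder-increment property of $W\in\mathcal{C}^\rho$ to control the boundary $\{y=0\}$ and to justify working with a near-maximizer at $y^*>0$ is, if anything, more careful than the paper's parenthetical appeal to compactness of $\blacktriangle_{0,1}$ (which, as defined in \eqref{e:def:tri}, excludes $\{y=0\}$).

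The genuine gap is your resolution of the pyramid case $\mathbf{N}_N=\mathbf{N}_N^\theta$. Your closing step requires $\rho(|y_1-y_2|)/\rho(y^*)\to 0$ ``as the grid is refined'', but the admissible scales $\{\lfloor\theta^m\rfloor/N:\,m\in\mathbb{N}\}$ never refine near a fixed $y^*>0$: their multiplicative mesh is constantly $\theta$, so the nearest admissible scale $y_N$ to $y^*$ only satisfies $y_N\in[y^*/\theta,\,\theta y^*]$ up to rounding, and $|y_N-y^*|$ is of order $(\theta-1)y^*$ for infinitely many $N$ (concretely, for $\theta=2$, $y^*=1/3$ and $N=2^k$ the admissible scales are dyadic and stay at distance at least $1/12$ from $y^*$). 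Consequently $\rho(|y_N-y^*|)/\rho(y^*)$ is bounded away from $0$ — for $\rho(x)=x^\beta$ it is of order $(1-\theta^{-1})^\beta$ — your error term does not vanish, and the lower bound $\liminf_N\max_{(n,h)\in\mathbf{N}_N^\theta}\tilde\gamma_0(n,h)\ge L^\rho$ is not established; without it you only get the one-sided bound $\limsup_N\max\le L^\rho$. The paper disposes of this case differently: it asserts that $\mathbf{N}_N/N$ is asymptotically dense in $\blacktriangle_{0,1}$ ``by construction'' for \emph{both} choices in \eqref{e:setchoice}, and then simply sends grid points to the maximizer, never needing any $\rho$-increment estimate of the kind you attempt. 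So where the paper invokes a density property of the geometric grid, you would need to actually prove such a statement (for $h=\lfloor\theta^m\rfloor$, exact scale-matching $y_N\to y^*$ is delicate and, as your own obstruction suggests, holds at best along subsequences of $N$, since the phase $\log N \bmod \log\theta$ oscillates); the interpolation-plus-Hölder patch you sketch cannot substitute for it, because it implicitly presupposes exactly the mesh refinement that the pyramid grid lacks.
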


\begin{proof}[Proof of Lemma \ref{lem:conv}] For the proof, the reader should recall the definition of the triangle $\blacktriangle_{0,1}$ in eq. \eqref{e:def:tri} and the set $\mathbf{N}_N$ as one of the sets in eq. \eqref{e:setchoice}. We also point out to the reader that the set inclusion $\mathbf{N}_N/N \subset \blacktriangle_{0,1}$ holds (this is obvious from the construction of the respective sets). We now turn to the proof.\\
    The convergence holds for any fixed outcome where the sample path of $W$ lies in $\mathcal{C}_\rho([0,1])$.  As before, we fix  such an outcome and notice that by definition of $L^\rho$ in \eqref{e:defLrho} we have that 
     \begin{align}
     \nonumber 
    \max_{(n,h) \in \mathbf{N}_N}\tilde \gamma_0(n,h)  & \le \max_{(x,y) \in \mathbf{N}_N/N} f^W(x,y)
    \\
    &  \leq \sup_{(x,y) \in \blacktriangle_{0,1}} f^W(x,y) = L^\rho , 
    \label{det3}
     \end{align}
     where the function $f^W$ is defined by 
     \[
f^W(x,y):=\frac{\|W(x+y)-2W(x)+W(x-y)\|}{\rho(y)}.
\]  
Since $W$ is Hölder continuous it follows directly that $f^W$ is continuous on $\blacktriangle_{0,1}$. 
Again by construction $\mathbf{N}_N/N$ (see eq. \eqref{e:setchoice}) is asymptotically dense in $\blacktriangle_{0,1}$ in the sense that for any $(x,y) \in \blacktriangle_{0,1}$ there exists a sequence $(x_N,y_N) \in \mathbf{N}_N/N$ such that $x_N \to x, y_N \to y$. This holds in particular for a point in (the compact) set $\blacktriangle_{0,1}$ where $f^W$ is maximized, say $(x^*, y^*)$,  and it follows that
\[
\max_{(n,h) \in \mathbf{N}_N}\tilde \gamma_0(n,h) = \max_{(x,y) \in \mathbf{N}_N/N} f^W(x,y)\ge f^W(x_N^*, y_N^*) \to f^W(x^*,y^*)=L^\rho.
\]
Observing the upper bound in \eqref{det3}  the the convergence  in \eqref{e:Lrcon} follows.
\end{proof}

\begin{lem} \label{lem:cont}
    Under Assumption \ref{ass_1} and for any of the weight functions in \eqref{polyweights} and \eqref{logweights}, the random variable $L^\rho$ has a continuous distribution function.
\end{lem}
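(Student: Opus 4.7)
The plan is to view $L^\rho$ as the supremum of a countable centered Gaussian process and then invoke the classical theorem of Tsirelson--Ibragimov--Sudakov on the distribution of such a supremum.

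First, I would reduce $L^\rho$ to a countable Gaussian supremum. Since $\mathbb{B}$ is separable, Hahn--Banach supplies a countable norming family $\{\ell_n\}_{n \in \mathbb{N}}$ in the unit ball of $\mathbb{B}'$ such that $\|z\| = \sup_n \ell_n(z)$ for every $z \in \mathbb{B}$. Because the sample paths of $W$ are continuous on $[0,1]$ (indeed $W \in \mathcal{C}^\rho([0,1], \mathbb{B})$ a.s.) and $\rho$ is continuous on $(0,1]$, the supremum in the definition of $L^\rho$ can be restricted to any countable dense subset $D \subset \blacktriangle_{0,1}$. Hence
\[
L^\rho = \sup_{(x,y,n) \in D \times \mathbb{N}} Z_{x,y,n}, \qquad Z_{x,y,n} := \frac{\ell_n\big(W(x+y) - 2W(x) + W(x-y)\big)}{\rho(y)},
\]
and $\{Z_{x,y,n}\}$ is a countable centered Gaussian process. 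Moreover, $L^\rho \le 2\|W\|_\rho < \infty$ a.s., so the supremum is a.s.\ finite.

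Second, I would apply Tsirelson's theorem (see, e.g., Lifshits, \emph{Gaussian Random Functions}, Chapter 12): the distribution of the supremum of a countable centered Gaussian process with a.s.\ finite supremum is absolutely continuous on $(c, \infty)$, where $c$ denotes the essential infimum of $L^\rho$. The map $q : f \mapsto \sup_{(x,y) \in \blacktriangle_{0,1}} \|f(x+y) - 2f(x) + f(x-y)\|/\rho(y)$ is a continuous seminorm on $\mathcal{C}^\rho([0,1], \mathbb{B})$ (bounded by $2\|f\|_\rho$), so $\{q = 0\}$ is a closed linear subspace and the Gaussian zero--one law gives $\mathbb{P}(L^\rho = 0) \in \{0, 1\}$. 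It therefore only remains to exclude the degenerate case $L^\rho = 0$ a.s.\ in order to remove any possible atom at zero.

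Third, if $L^\rho = 0$ a.s., then $W(x+y) + W(x-y) = 2 W(x)$ for all $(x,y) \in \blacktriangle_{0,1}$ a.s., i.e., $W$ is midpoint-linear and therefore (by continuity) affine on $[0,1]$; together with $W(0) = 0$ this forces $W(x) = x\, W(1)$ a.s.\ and collapses the limiting covariance to the rank-one form $C^\varepsilon(x,y)[f,g] = xy\, C^\varepsilon(1,1)[f,g]$. This is the trivial uninformative limit, which is excluded by the non-degeneracy implicit in the model; in the non-degenerate regime we have $L^\rho > 0$ a.s., and Tsirelson's theorem then yields absolute continuity of the law of $L^\rho$ on $(0, \infty)$, hence continuity of its CDF. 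The main obstacle I anticipate is the precise invocation of Tsirelson's theorem in this Banach-valued setting and the handling of the possible atom at zero, for which the seminorm structure and the above non-degeneracy argument are the essential ingredients.
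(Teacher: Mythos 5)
Your reduction coincides with the paper's own first step: both restrict the supremum to a countable dense subset of $\blacktriangle_{0,1}$ using continuity of the sample paths, and both use a countable Hahn--Banach norming family in the unit ball of $\mathbb{B}'$ to rewrite $L^\rho$ as the a.s.\ finite supremum of a countable centered Gaussian process. The difference lies in the final probabilistic ingredient: the paper invokes Theorem 2 of Giessing (2023), which directly delivers continuity of the distribution function of a non-degenerate Gaussian supremum, whereas you invoke Tsirelson's theorem (absolute continuity of the law of the supremum away from its lower edge) and then try to dispose of the remaining atom by a zero--one law. That substitution is legitimate in principle, but your execution has a genuine gap.

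Tsirelson's theorem permits an atom only at $c = \operatorname{ess\,inf} L^\rho$, and you silently slide from ``possible atom at $c$'' to ``possible atom at zero'' without ever proving $c=0$. If $c>0$ and the atom sits there, the CDF is discontinuous at $c>0$ and your zero--one-law argument at $0$ says nothing about it; as written, the proof does not close. The missing step is standard but must be supplied: under Assumption \ref{ass_1} the law of $W$ is a centered Gaussian measure on the separable Banach space $\mathcal{C}^\rho([0,1],\mathbb{B})$, whose topological support is a closed \emph{linear} subspace (the closure of the Cameron--Martin space) and hence contains $0$; therefore $\mathbb{P}(\|W\|_\rho<\epsilon)>0$ for every $\epsilon>0$, and since $0\le L^\rho\le 2\|W\|_\rho$ this yields $\mathbb{P}(L^\rho<\epsilon)>0$ for all $\epsilon>0$, i.e.\ $c=0$. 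With that inserted, your remaining steps are sound: $\{L^\rho=0\}=\{W\in\ker q\}$ for the continuous seminorm $q$, the Gaussian zero--one law gives probability $0$ or $1$, and the degenerate case forces $W(x)=xW(1)$ with a rank-one covariance, which is excluded by the non-triviality of the limiting covariance --- note that the paper's own proof handles this last non-degeneracy point in exactly the same implicit way (``the covariance is assumed to be non-trivial in Assumption \ref{ass_1}''), so you are no worse off there. Once the identification $c=0$ is added, your argument is a valid, slightly more classical alternative to the paper's citation of Giessing; its only cost is the extra support/small-ball step that Giessing's anti-concentration result lets the paper avoid.
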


\begin{proof}[Proof of Lemma \ref{lem:cont}]  For the proof, we first notice that $L^\rho$, defined in \eqref{e:defLrho}, equals the supremum norm of the continuous (random) function 
\[
g^W(x,y):=\frac{W(x+y)-2W(x)+W(x-y)}{\rho(y)}, \qquad (x,y) \in \blacktriangle_{0,1}.
\]
For any choice $x,y$ the element $g^W(x,y)$ lives in the Banach space $\mathbb{B}$. Due to continuity of $g^W$, we can confine the supremum in the definition of $L^\rho$  in \eqref{e:defLrho} to the countable set $(x,y) \in \blacktriangle_{0,1} \cap \mathbb{Q}^2$. Next, since $\mathbb{B}$ is separable, we choose a dense, countable subset $\mathcal{B}$ from it. For every element $b \in \mathcal{B}$, we can find a functional $b'$ from the topological dual space with $\|b'\|_\mathcal{L}= 1$ and $b'(b)=\|b\|$  \citep[this is a well-known consequence of the Hahn-Banach theorem; see Theorem 12.2 in][]{bachman:narici:2000}. Then, it follows for any $\bar b \in \mathbb{B}$ that
\begin{align} \label{e:supid}
\sup_{b \in \mathcal{B}}b'(\bar b)=\|\bar b\|.
\end{align}
The inequality "$\le$" is clear because $b'(\bar b) \le \|b'\|_\mathcal{L} \|\bar b\| = \|\bar b\|$.
For the reverse direction let $\epsilon>0$ be fixed, but arbitrary and choose an element $b_\epsilon \in \mathcal{B}$ such that $\|b_\epsilon-\bar b\|\le \epsilon$. Then, it follows that
\[
b_\epsilon'(\bar b) = b_\epsilon'(b_\epsilon)+b_\epsilon'(\bar b-b_\epsilon) =  \|b_\epsilon\| +b'_\epsilon(\bar b-b_\epsilon) \ge \|\bar b\| -\epsilon-|b_\epsilon'(\bar b-b_\epsilon)|.
\]
Now, since $\|b_\epsilon'\|_\mathcal{L}= 1$ we have 
\[
|b_\epsilon'(\bar b-b_\epsilon)| \le \|b_\epsilon'\|_\mathcal{L} \|\bar b-b_\epsilon\| \le \epsilon ,
\]
and we obtain 
\[
b_\epsilon'(\bar b) \ge \|\bar b\|-2\epsilon.
\]
Since $\epsilon>0$ was arbitrary, we obtain the identity \eqref{e:supid}. Accordingly, we have 
\[
L^\rho =
\sup_{(x,y) \in \blacktriangle_{0,1} \cap \mathbb{Q}^2} \| g^W(x,y)\| = 
\sup_{(x,y) \in \blacktriangle_{0,1} \cap \mathbb{Q}^2} \sup_{b \in \mathcal{B}}\,\, \big[b'(g^W(x,y))\big].
\]
Notice that $b'(g^W(x,y))$ is for each $x,y,b$ a real-valued normal distributed random variable, i.e. we are taking the supremum of the  separable Gaussian process 
$\{ b'(g^W(x,y)) \colon  b \in \mathcal{B}; (x,y) \in \blacktriangle_{0,1} \} $
that is almost surely finite and non-degenerate (the latter, because the covariance is assumed to be non-trivial in Assumption \ref{ass_1}). It is a direct consequence of Theorem 2 in \cite{giessing:2023} that the cdf of $L^\rho$ is  continuous. 
\end{proof}

\subsection{Proof of Theorem \ref{theo2}}

For the proof, it is helpful for the reader to recall the objects $\gamma, \gamma_0$ and $\tilde \gamma_0$, all defined at the beginning of Section \ref{sec:proofs1}. We also recall their relation briefly. $\gamma$ is the central test statistic that our Algorithm \ref{alg1} uses for change point detection. If $\gamma(n,h)>q_{1-\alpha}$ the algorithm detects a change in the interval $[n-h+1, n+h]$ characterized by $n,h$. Let us now consider the case, where there actually is no change in said interval. In this case it holds by definition that $\gamma(n,h)=\gamma_0(n,h)$, since $\gamma_0(n,h)$ is the test statistic with all means removed. We will thus consider repeatedly the event $\gamma_0(n,h)>q_{1-\alpha}$, which implies that an erroneous  detection of a change took place (see also eq. \eqref{e:Sinq}). Finally, $\tilde \gamma_0(n,h)$ is a Gaussian approximation of $\gamma_0(n,h)$ that is more manageable than the original $\gamma_0(n,h)$.
 
\textbf{Outline } To explain  the structure of the proof given below, we give a short intuition first. To show strong localization, we need two things: First, and obviously, the statistic $\gamma(n,h)$ needs to be sufficiently big ($\gamma(n,h)>q_{1-\alpha}$) if $n$ is a change, or near a change. This is the subject of step II of the proof  below. Next, and perhaps less obviously, detecting a true change could fail in Algorithm \ref{alg1}, even if $\gamma(n,h)$ is  large, because the algorithm occasionally detects  a spurious change, say in $(n_{spur}, h_{spur})$. In Theorem \ref{theo1} provides  weak localization which means that the probability of this event is asymptotically bounded by $\alpha$, but it is typically not $0$. If a change is wrongly detected in   $(n_{spur}, h_{spur})$ the algorithm discards values with overlapping intervals 
and this just might delete the pair $(n,h)$ and thus prevent us from detecting a true change later on. Step I of the proof lays the foundations for our argument why this (asymptotically) never happens. The reasoning works like this:  Algorithm \ref{alg1} goes through all pairs $(n,h)$ from small $h$ to large $h$. However, if there is no change in $[n-h+1, n+h]$ we have $\gamma(n,h)=\gamma_0(n,h)$ and we show in step I that for all small $h$ we have  $ \gamma_0(n,h) \approx 0$. More precisely, we show
\[
\max_{1 \le h \le N u_N} \gamma_0(n,h) \to 0
\]
whenever  $u_N$ is an (arbitrary) null sequence. In other words, spurious rejections only occur for large $h$ with $h/N$ not negligible and thus, they occur very late when running the algorithm. In particular, under the assumptions of the theorem, we will prove  that all true changes are detected for $h\ll N u_N$ and thus before the spurious changes can hamper their detection. Step III pulls together the different threads of the argument. Finally, since the main body of the proof is formulated for polynomial weight functions, we provide some details on the proof for logarithmic weight functions in the end.

\textbf{Step I }
We use the same notations as in the previous sections and as before assume that the probability space has been modified such that $P_N \to W$ a.s.
To start, we notice the following aspect of the quantity  $\gamma_0(n,h)$ defined in \eqref{e:def:gam0}, which coincides with scan statistic $\gamma(n,h)$, if there is no change 
in the interval characterized by $(n,h)$: Let $u_N$ be a sequence of positive numbers with $u_N \downarrow 0$, 
then a.s. 
\begin{align} \label{e:nospur}
\max_{(n,h) \in \mathbf{N}_N: h <Nu_N}\gamma_0(n,h) \to 0, 
\end{align}
because it follows from \eqref{e:maxgg0}
 that a.s. 
\begin{align} \label{e:g:s1}
\max_{(n,h)}|\gamma_0(n,h)-\tilde \gamma_0(n,h)|=o(1)
\end{align}
where $\tilde \gamma_0$ is defined 
as $\gamma_0$ in \eqref{e:def:gam0}, but with $P_N$ replaced by $W$.
Now, using this definition
and similar arguments  as in the derivation of  \eqref{e:maxgg0}
for the first inequality, we have 
\begin{align}\label{e:g:s2}
    \max_{(n,h): h \le N u_N}|\tilde \gamma_0(n,h)| \le 2 \sup_{\substack{0 \le x < x+y \le 1 \\ 0 < y \le u_N}} \frac{\|W(x)-W(x+y)\|}{\rho(y)}=o(1),
\end{align}
where  the second estimate follows form the fact  that $W$ is   an element of the Hölder space.
This statement is true for any choice of $(u_N)_N$. 

We will now fix a  sequence $(u_N)_N$ that satisfies
\begin{align} \label{e:prop:uN}
  \frac{\max_k r_{N,k}}{N u_N}\to 0, \,\,\, \textnormal{where} \,\,\, r_{N,k} :=  
  \begin{cases}
    \bigg(\frac{N^{1/2-\beta}}{\Delta_k}\bigg)^{\frac{1}{1-\beta}}, & \textnormal{for}\,\, \rho(x)=x^\beta \\
    \frac{\log^{2\beta}(N)}{\Delta_k^2}, & \textnormal{for}\,\, \rho(x)=x^{1/2}\log^\beta(x^{-1}). \\
\end{cases}
\end{align}
The above condition can always be fulfilled if $\max_ k r_{N,k} = o(N)$ for a suitable choice of $(u_N)_N$. We verify that indeed $\max r_{N,k} = o(N)$ with a small calculation in the case of polynomial weights (the case of logarithmic weights  can be verified similarly). Recall Condition i) in Assumption \ref{ass_2}, which states that $\min_{k=1,...,K} \delta_k^{1-\beta}\Delta_k\gg N^{1/2-\beta}$. Now
\begin{align*}
     \max_k r_{N,k} & = \max_k \delta_k \bigg(\frac{N^{1/2-\beta}}{\delta_k^{1-\beta}\Delta_k}\bigg)^{\frac{1}{1-\beta}} \le  \max_k (\delta_k ) \bigg(\frac{N^{1/2-\beta}}{\min_k (\delta_k^{1-\beta}\Delta_k)}\bigg)^{\frac{1}{1-\beta}} \\
   & =  \max_k (\delta_k) \cdot o(1) \le N  \cdot o(1)=o(N).
\end{align*}
Thus, we can choose a sequence $(u_N)_N$ that converges  slowly enough to $0$ such that the convergence in \eqref{e:prop:uN} holds.
Now, observing  \eqref{e:g:s1} and  \eqref{e:g:s2}), we have 
\[
\mathbb{P} \big (  \mathcal{E}_N  \big )  \to 0,
\]
where
$$
\mathcal{E}_N:=  \Big \{   \max_{(n,h): h \le N u_N} \gamma_0(n,h)>q_{1-\alpha}\Big \}.
$$
Thus, asymptotically, there will be \textit{no pairs wrongly} included in the output $(n,h) \in \mathbf{C}$ of \\ MultiScan$(\emptyset, \mathbf{N}_N, q_{1-\alpha})$  where $h\le N u_N$ (see also eq. \eqref{e:Sinq}).

\textbf{Step II } We now condition on the event $\mathcal{E}_N^c$ and analyze $\gamma(n,h)$. For the below decomposition, it is helpful to review the definitions of $\gamma$ (eq. \eqref{e:def:gam}), $\gamma_0$ (eq. \eqref{e:def:gam0}) and the function $f$ (eq. \eqref{e:def:f}). 
We then get, for any  $h\le u_N N$, the lower bound
\[
\gamma(n,h) = \frac{\|S_{n-h+1}^n-S_{n+1}^{n+h}\|}{\rho(h/N)N^{1/2} } \ge \frac{\|f(n,h)\|}{\rho(h/N)N^{1/2}}- \gamma_0(n,h) \ge \frac{\|f(n,h)\|}{\rho(h/N)N^{1/2}}- q_{1-\alpha},
\]
where the last inequality holds because we have conditioned on the set  $\mathcal{E}_N^c$.
Let us now consider for each  location  $c_k$ a  bandwidth parameter $H_k<\delta_k/2$ that also satisfies $H_k\le N u_N$.
We obtain for the maximum
\begin{align} \label{def:Mk}
M_k & := \max_{h\le H_k, |n-c_k|<h}\gamma(n,h) \ge \gamma(c_k, H_k) \\
&  \ge  \frac{\|f(c_k, H_k)\|}{\rho(H_k/N)N^{1/2}}- q_{1-\alpha}  =
       \frac{H_k\Delta_k}{\rho(H_k/N)N^{1/2}} - q_{1-\alpha} , 
       \nonumber 
\end{align}
where we used the definition of $f$  in \eqref{e:def:f} in the last step. 

For the next step, we have to consider the case where $\rho$ is the polynomial weight and the logarithmic weight separately and we focus for now on the polynomial case. We give additional details on the logarithmic case at the end of this section (it largely works analogously). For now,  recall  the definition of $r_{n,k}$ in \eqref{e:prop:uN}. For the polynomial case, define \footnote{If $\mathbf{N}_N = \mathbf{N}^{all}$ it is directly clear that $(c_k, H_k) \in \mathbf{N}_N$ for $H_k= \lfloor (5 q_{1-\alpha})^{1/(1-\beta)}  r_{N,k}\rfloor +1$. For the case where $\mathbf{N}_N = \mathbf{N}_N^{\theta}$ we should choose the next natural number for $H_k$ larger than $\lfloor (5 q_{1-\alpha})^{1/(1-\beta)}  r_{N,k}\rfloor +1$ that satisfies again  $(c_k, H_k) \in \mathbf{N}_N$. We will ignore this minor technicality in the following.} 
\begin{align} \label{e:def:Hk}
H_k := \lfloor (5 q_{1-\alpha})^{1/(1-\beta)}  r_{N,k}\rfloor +1. 
\end{align}
 Notice that such a choice is possible while still maintaining $H_k < \delta_k/2$, since 
\[
\frac{H_k}{\delta_k/2} \lesssim \frac{r_{N,k}}{\delta_k}  = \bigg(\frac{  N^{1/2-\beta}}{\delta_k^{1-\beta}\Delta_k } \bigg)^{\frac{1}{1-\beta}} \le \bigg(\frac{  N^{1/2-\beta}}{\min_k \delta_k^{1-\beta}\Delta_k } \bigg)^{\frac{1}{1-\beta}} \to 0.
\]
Here the constant implied by "$\lesssim$" does not depend on $k$, and we have used the assumption that $\min_{k=1,...,K} \delta_k^{1-\beta}\Delta_k\gg N^{1/2-\beta}$. 
With this choice of $H_k$, we obtain
\begin{align*}
     \frac{H_k\Delta_k}{\rho(H_k/N)N^{1/2}}=\frac{H_k^{1-\beta}\Delta_k}{N^{1/2-\beta}}\ge  \frac{ 5q_{1-\alpha}}{2}.
\end{align*}
The right side is greater than $2q_{1-\alpha}$, and we obtain from \eqref{def:Mk} that 
\begin{align} \label{e:Mk2}
\min_k M_k >q_{1-\alpha}.
\end{align}
The interpretation of this result is that for all changes $c_k$ (simultaneously) as $N \to \infty$ there exists at least the pair $(c_k, H_k) \in \mathbf{N}_N$, such that $\gamma(c_k, H_k) >q_{1-\alpha}$, with probability converging  to $1$ and $H_k \le u_NN$. 
In view of the convergence \eqref{e:nospur}, the condition $\gamma(n, h) >q_{1-\alpha}$ can (asymptotically) only be satisfied for $h \le u_NN$ if the interval defined  by $(n,h)$ captures one of the change point locations $c_k$. So no spurious changes will be detected for $h \le u_NN$. We continue, as before, conditioning on the event $\mathcal{E}_N^c$.

\textbf{Step III } For the next and final step, the reader should again familiarize herself with the Algorithm \ref{alg1} - in particular with the notations of the ordering "$\preceq$" (see eq. \eqref{e:def:order}), with the definition of the elimination method "$\circleddash$" (see eq. \eqref{e:def:minus}) and finally, with the fact that the algorithm proceeds along the ordering $ \preceq$ from smallest to largest element.
The inequality \eqref{e:Mk2} means that for each $k$, there exists a pair 
\[
(n_k^*, h_k^*) \in 
\mathbf{N}^{(H_k)} := \{(n,h) \in \mathbf{N}_N: h\le H_k, |n-c_k|<h\}
\]
where $\gamma(n_k^*,h_k^*) >q_{1-\alpha}$ and $(n_k^*, h_k^*)$ is the unique element of $\mathbf{N}^{(H_k)}$ that the algorithm would include in $\mathbf{C}$ (if it is not deleted before). As soon as a pair $(n_k^*,h_k^*)$ is processed, the algorithm updates 
\[
\mathbf{N}_N \to [\mathbf{N}_N\circleddash (n_k^*,h_k^*)]
\]
which does not include any elements of $\mathbf{N}^{(H_k)}$ anymore (the change point $c_k$ will only be detected once).
Since $h_k^*\le H_k < \delta_k/2$ it follows that the intervals defined  by the pairs $(n_k^*, h_k^*)$ are non-overlapping for different $k$. Now, consider a pair $k\neq k'$ with $(n_k^*, h_k^*) \preceq (n_{k'}^*, h_{k'}^*)$. The change at position $k$ will be detected first by our algorithm. Since $(n_k^*, h_k^*) \preceq (n_{k'}^*, h_{k'}^*)$ and the implied intervals are non overlapping, it follows that still 
\[
(n_{k'}^*, h_{k'}^*) \in [\mathbf{N}\circleddash (n_k^*,h_k^*)]
\]
for any subset $\mathbf{N} \subset \mathbf{N}_N$ with $(n_{k'}^*, h_{k'}^*), (n_k^*,h_k^*) \in \mathbf{N}$.
By induction it follows that the deletion of the first $r$ pairs (w.r.t. $\preceq$) does not delete the remaining pairs and hence all changes will be detected with their specific pair $(n_k^*,h_k^*)$. So, all $K$ changes are detected. This shows the strong localization property.

\textbf{Details on the logarithmic weight function } Finally, we have focused in our above proof on the case of a polynomial weight function. The case of logarithmic weights is similar. In this case, we define 
$H_k :=\lfloor  (5q_{1-\alpha})^2 r_{N,k} \rfloor+1$ 
which leads to the analysis of the deterministic part 
\begin{align} \label{e:logcalc}
\frac{H_k\Delta_k}{\rho(H_k/N)N^{1/2}} = \frac{H_k^{1/2}\Delta_k}{\log^\beta(N/H_k)}\ge \frac{5q_{1-\alpha}}{2}>2q_{1-\alpha}.
\end{align}
The remaining arguments are the same as in the polynomial case and therefore not repeated.

\subsection{Proof of Propositions \ref{prop1} and \ref{prop2}}

For this proof, it will be useful, to recall the definition of the limiting distribution $L^\rho$ in \eqref{e:defLrho}. Note  that the ratio on the right of  this equation is defined as $0$, if $y=0$;  this matters below for the identity \eqref{e:LLz}. As before, the reader should appreciate that $\mathbf{N}_N/N \subset \blacktriangle_{0,1}$. The reader should also be familiar with the definitions at the beginning of Section \ref{sec:proofs1} of $\gamma, \gamma_0$ and $\tilde \gamma_0$.
\\
We  mainly focus on the weak localization property in Proposition  \ref{prop1} and  recall that for any pair $(n,h)$ where no change occurs in the corresponding interval, we have
\begin{align*}
\gamma(n,h) &= \gamma_0(n,h)  \\
&=  \frac{\|[P_N(n/N)-P_N((n-h+1)/N)]-[P_N((n+h)/N)-P_N((n+1)/N)]\|}{ \rho(h/N)}.
\end{align*}
Since, by assumption of the theorem, the variables $\varepsilon_n$ are actually the increments of a Gaussian process $W$, we have that $P_N(n/N)=W(n/N)$ for any $n=0,...,N$ and thus
\begin{align*}
\gamma_0(n,h) & =\tilde \gamma_0(n,h) \\
& =  \frac{\|[W(n/N)-W((n-h+1)/N)]-[W((n+h)/N)-W((n+1)/N)]\|}{ \rho(h/N)}.
\end{align*}
Now, the random variable $L^\rho$ in this case is a.s. finite, since eq. \eqref{e:WCadd} implies that $\|W\|_\rho<\infty$ almost surely, and we have from \eqref{det3}
\[
\max_{(n,h) \in \mathbf{N}_N} \tilde \gamma_0(n,h) \le L^\rho. 
\]
In particular, if $q_{1-\alpha}$ denotes the $1-\alpha$-quantile of $L^\rho$, it follows as in the proof of Theorem \ref{theo1} (see eq. \eqref{e:Sinq}), that
\begin{align*}
\mathbb{P}\big(\exists (n,h) \in \mathbf{N}_N \,\, \textnormal{falsely included in} \,\, \mathbf{C} \big) &  \le  \mathbb{P}\big(\max_{(n,h) \in \mathbf{N}_N}\tilde \gamma_0(n,h)>q_{1-\alpha} \big) \\
& \le \mathbb{P}(L^\rho>q_{1-\alpha}) =\alpha.
\end{align*}
This is already the weak localization property, as defined in eq. \eqref{e:weakl}. Next, we show that if no change occurs the level $1-\alpha$ is asymptotically exactly approximated, i.e.
\begin{align*}
\mathbb{P}\big(\exists (n,h) \in \mathbf{N}_N \,\, \textnormal{falsely included in} \,\, \mathbf{C} \big) & = \mathbb{P}\big(\exists (n,h) \in \mathbf{N}_N \,\, \gamma(n,h)>q_{1-\alpha}  \big) \\
& \to \mathbb{P}(L^\rho>q_{1-\alpha} )  = \alpha.
\end{align*}
To establish this result, it suffices to show the following convergence 
\begin{align} \label{e:LrhoLz}
\max_{(n,h) \in \mathbf{N}_N} \gamma(n,h) = \max_{(n,h) \in \mathbf{N}_N} \tilde \gamma_0(n,h) \to L^\rho.
\end{align}
For the proof of this convergence, we again fix the outcome. Let $z \in (0,1)$ be a constant and consider the truncated triangle
\[
\blacktriangle_{0,1}^z := \{(x,y) \in \blacktriangle_{0,1}: z \le y \le 1\}
\]
and the corresponding set
\[
\mathbf{N}^{z}_N:=\{(n,h) \in \mathbf{N}_N: h \ge zN\}.
\]
Using the continuity of $W$ and the fact that $\inf_{y \ge z} \rho(y) $ is bounded away from $0$,  it follows that
\[
\max_{(n,h) \in \mathbf{N}^{z}_N} \tilde \gamma_0(n,h) \to \sup_{\blacktriangle_{0,1}^z} \frac{\|W(x+y)-2W(x)+W(x-y)\|}{\rho(y)}=:L^{\rho,z}
\]
as $N\to \infty$.
Now, $L^\rho$ is by definition 
\begin{align} \label{e:LLz}
   L^\rho= \lim_{z \downarrow 0} L^{\rho,z} ,
\end{align}
 and hence 
using a double limit $\lim_{z \downarrow 0} \lim_{N \to \infty}$ with the bounds
\[
\max_{(n,h) \in \mathbf{N}^{z}_N} \tilde \gamma_0(n,h)  \le \max_{(n,h) \in \mathbf{N}_N} \tilde \gamma_0(n,h)  \le L^\rho
\]
shows the desired convergence in eq. \eqref{e:LrhoLz}.

Weak localization in the case of Proposition \ref{prop2} follows the same arguments and the proof is therefore omitted. 
To show the strong localization property (in both propositions), one can now take the same steps as in the proof of Theorem \ref{theo2} (logarithmic case, now with $\beta=1/2$), and obtain the desired results. More precisely, we can focus on Step II in the proof of Theorem \ref{theo2} where we choose $r_{N,k}$ as described in eq. \eqref{e:prop:uN} for the logarithmic case with $\beta=1/2$. We need to show divergence of the $M_k$s defined in \eqref{def:Mk} and therefore define $H_k$ as in \eqref{e:def:Hk}. We then go through the calculation \eqref{e:logcalc} which proves this divergence.\\
Finally, we make a small comment on an assumption imposed for the Gaussian case, which is that $q_{1-\alpha}>C_W$ (the constant from Assumption \ref{ass_5}). Roughly speaking, this extra assumption is needed, because when going through the proof of the strong localization property, we had before the relation \eqref{e:nospur} for any null-sequence $u_N$. This relation followed from \eqref{e:g:s2} and the fact that $W$ was in the Hölder space. For the Gaussian case, discussed in this section, $W$ is not necessarily in the Hölder space (it would be if $C_W=0$). As a consequence, we get the modification of \eqref{e:nospur} that a.s.
\begin{align*} 
\limsup_N \max_{(n,h) \in \mathbf{N}_N: h <Nu_N}\gamma_0(n,h) \le C_W.
\end{align*}
 As $C_W<q_{1-\alpha}$, this means that for small scales, we will have (asymptotically)
\[
\max_{(n,h) \in\mathbf{N}_N: h <Nu_N}\gamma_0(n,h) <q_{1-\alpha}
\]
and again no false detections can take place.

\subsection{Proof of Theorem \ref{propmixing}} We will first prove  weak convergence of the process $\{ P_N(x,s)\}_{x,s \in [0,1]} $  in $\mathcal{C}([0,1] \times [0,1], \mathbb{R})$, the space of continuous functions, defined on the unit square and equipped with the respective sup-norm. For this purpose, it suffices to show convergence of the marginals and tightness (Theorem 16.5 in
\cite{kallenberg:2002}).
For convergence of the marginals, we take a total of $M$ tuples $(x_1,s_1),...,(x_M,s_M)$ and consider the vector 
\[
v_N := (P_N(x_m,s_m))_{m=1,...,M}.
\]
Convergence of $v_N$ now follows directly from Theorem 3.27 in \cite{dehling:mikosch:sorensen:2002}, with $\delta=1$ and using Assumption \ref{assmixing} parts i) (moment assumption), part ii) (fast decay of mixing coefficients) and part iii) (convergence of covariance matrix for $v_N$). Notice that we have used that by assumption $p>q\ge 4$. The latter inequality  follows because $q>1+1/r>2$ and $q$ is by assumption an even integer. \\
To prove  tightness, we introduce  the notation of the modulus of continuity
\[
\omega(f,h) := \sup_{x,y,s,t, \in [0,1]} \{|f(x,s)-f(y,t)|: d((x,s),(y,t))<h\} .
\]
Here, $d$ is a metric on the two-dimensional unit cube, defined as
\[
d((x,s),(y,t)) = \max(|x-y|, |s-t|^{r})
\]
(the parameter $r$ appears in first part of Assumption \ref{assmixing}).
The packing number w.r.t. $d$ and $h$ of the unit cube is defined as the smallest number of points $(x_1,s_1),...,(x_D,s_D)$, such that  
\[
\text{there exists no point }
(y,t) \in [0,1]^2 \text{ with }~ d((x_i,s_i),(y,t))>h \,\, \forall ~i=1,...,D.
\]
For the present case, it is easy to show \citep[with tools as on p. 98 in ][]{vaart:wellner:1996}  that the packing number with respect to the metric $d$ can be upper bounded by
\[
D = D_d(h) \le h^{-1} \times h^{-1/r}.
\]
We want to apply Theorem 2.2.4 in \cite{vaart:wellner:1996} and for this purpose also need a bound for $\mathbb{E}|P_N(x,s)-P_N(y,t)|^q$, which we derive with the help of Theorem 3 in \cite{yoshihara:1978}. To be precise, we use this result with the parameter choices $m=q$ and $\delta=p-q$ to obtain 
\[
\mathbb{E}|P_N(x,s)-P_N(y,t)|^q \le C d((x,s),(y,t)).
\]
Then we apply Theorem 2.2.4 in \cite{vaart:wellner:1996}, with $\psi(x)=x^{q}$. For the theorem to yield a meaningful bound, an integral over the packing numbers has to convergence, which is where we use the requirement $q>1+1/r$ made in our Assumption \ref{assmixing}. In  named theorem, the parameters $(\eta, \delta) =(\eta_N, \delta_N) $ need to be chosen depending on the sample size $N$, such that $(\eta_N, \delta_N) \to (0,0)$ with $\delta_N \eta_N^{-2/q(1+1/r)} \to 0$, which then yields 
\begin{align} \label{e:tightq}
\lim_{\delta \downarrow 0}\lim_{N \to \infty}\{\mathbb{E}|\omega(P_N,\delta)|^q\}^{1/q} =0.
\end{align}
Together, this equicontinuity and the previously established convergence of the marginals show the weak convergence of the  process $P_N$ in the space  $\mathcal{C}([0,1] \times [0,1], \mathbb{R})$ to a centered Gaussian limiting process. In other words, with $\mathbb{B}=\mathcal{C}([0,1],\mathbb{R})$ the process $\{P_N(x): x \in [0,1]\}$ converges to a Gaussian process $\{W(x): x \in [0,1]\}$ in $\mathcal{C}([0,1],\mathbb{B})$. 
\\
Now, we want to show that this convergence holds even w.r.t. the stronger topology of the Hölder space. It will suffice to show tightness of the sequence $(P_N)_{N \in \mathbb{N}}$ in the Hölder space $\mathcal{C}^\rho([0,1],\mathbb{B})$ and for this purpose  we apply Theorem 2.1 in \cite{rackauskas:suquet:2009}. This theorem has three conditions, which we state and validate in turn.
\begin{itemize}
    \item[(i)] For any $x$ the process $(P_N(x))_{N \in \mathbb{N}}$ is tight in the Banach space $\mathbb{B}$.
\end{itemize}
This follows from the weak convergence in first part of our proof.
\begin{itemize}
    \item[(ii)] It holds that
    \[
    \max_{1 \le n \le N} \frac{\|\varepsilon_n\|}{N^{1/2-\beta}} =o_P(1).
    \]
\end{itemize}
Using the union bound and Markov's inequality yields for any $\zeta>0$ 
\begin{align*}
    \PR \lb   N^{\beta - 1/2} \max_{1\leq i \leq N} \| \varepsilon_i \| > \zeta \rb
    \lesssim N^{p ( \beta - 1/2 ) } \sum_{i=1}^N \E \| \varepsilon_i \|^{p}
    \lesssim N^{p ( \beta - 1/2 ) +1 }
    = o(1), 
 \end{align*}
where the last equality follows from $p>(1/2-\beta)^{-1}$.

 The final condition (iii) in \cite{rackauskas:suquet:2009} is a kind of equicontinuity condition in Hölder spaces. It is based on the realization that Hölder norms are topologically equivalent to some (much easier) sequence norms of dyadic numbers, with the original result dating back to \cite{ciesielski:1960}. We do not state the condition here, but only a much simpler moment condition, that has been shown to be sufficient in \cite{kutta:doernemann:2025} (see the proof of Lemma 3.3 in this reference).
 \begin{itemize}
    \item[(iii)] For our parameter $q$ (which as we recall satisfies $q>(1/2-\beta)^{-1}$) and some fixed constant $C$ we have
    \[
    \mathbb{E}\Big\| \sum_{i=\ell}^k \varepsilon_i \Big\|^{q} \leq C |k-\ell|^{q/2}.
    \]
\end{itemize}
This statement follows  by similar but simpler arguments as our proof of
\eqref{e:tightq} \citep[applying Theorem 2.2.4 in][]{vaart:wellner:1996}, which is omitted for the sake of brevity. This completes the proof.

\subsection{Proof of Theorem \ref{theo3}} 
Before conducting the proof we want to remind the reader that the distribution of the errors $\varepsilon_{n,M}$ (defined in \eqref{e:errorF}) depends on both $n$ and $N$. Also recall that $\varepsilon_{n,M}$ depends on the number of points sampled, $M$, which is either a fixed number or depends on $N$ ($M=M_N \to \infty$ as $N \to \infty$).
The proof follows again by application of Theorem 2.1 in \cite{rackauskas:suquet:2009}. 
We check the three assumptions of named theorem, beginning with
\begin{itemize}
    \item[(ii)] It holds that 
    \begin{align} \label{e:hdn}
       \max_{1 \le n \le N} \frac{\|\varepsilon_{n,M}\|}{\log^\beta(N)} =o_P(1).
    \end{align}
\end{itemize}
It suffices to show that (uniformly in $N$)
\begin{align} \label{e:boundmom}
\sup_n \mathbb{E}\exp(\|\varepsilon_{n,M}\|^{1/\beta'})<\infty, 
\end{align}
for some $\beta'<\beta$. This is because Lemma 2.2.2 in \cite{vaart:wellner:1996} implies that (for all $N>1$)
\[
\mathbb{E}\bigg[\max_{1 \le n \le N} \frac{\|\varepsilon_{n,M}\|}{\log^\beta(N)}\bigg] \lesssim \frac{ \log^{\beta'}(N)}{\log^\beta(N)}=o(1).
\]
Thus, we consider the moments of the errors $\varepsilon_i$ with $\beta'=1/2$. Using the fact that that 
 $\mathbb{E}[R] = \int_0^\infty (1-F_R(x))dx$ for any 
 non-negative  random variable $R$ with existing expectation and cdf $F_R$, we obtain
\begin{align} \label{e:expe}
    \mathbb{E} \big[ \exp(\|\varepsilon_{n,M}\|^2) \big]  = & \int_0^\infty \mathbb{P}\big(\exp(\|\varepsilon_{n,M}\|^2)>t \big) dt =  \int_0^\infty \mathbb{P}\big(\|\varepsilon_{n,M}\|>\log^{1/2}(t) \big) dt.
\end{align}
Recalling that in this proof $\|\cdot\|$ denotes the weighted $L^2$-norm for functions on the real line, it can be upper bounded by the sup-norm
\[
\bigg(\int_\mathbb{R} f^2(x) w(x) dx\bigg)^{1/2} \le \sup_{x \in \mathbb{R}} |f(x)| \bigg(\int_\mathbb{R} w(x) dx\bigg)^{1/2} =  \sup_{x \in \mathbb{R}} |f(x)|,
\]
where we have used that $w$ is a probability density. Next, recall the two-sided Dvoretzky–Kiefer–Wolfowitz inequality (see \cite{massart:1990}), which implies
for any $a>0$ that
\[
\mathbb{P}\Big(\sup_x |\widehat{F}_{n,M} (x) -F_n(x) |>a \Big) \le 2 \exp(-2Ma^2)
\]
where, as we recall, $M$ is the sample size in the construction of the empirical cdf. Now, with these identities and 
the definition $\varepsilon_{n,M} = \sqrt{M}[\widehat{F}_{n,M}-F_n]$ we get for the right side of  \eqref{e:expe} the upper bound
\begin{align*}
   \int_0^\infty \mathbb{P}\big(\sup_x |\widehat{F}_{n,M}-F_n|>\log^{1/2}(t)/\sqrt{M} \big) dt  &  \le 1+ 2 \int_1^\infty \exp(-2 \log(t)) dt \\
   & = 1 + 2 \int_1^\infty t^{-2} dt <\infty.  
\end{align*}
This shows the desired bound for the square exponential moment of $\varepsilon_{n,M}$ in \eqref{e:boundmom} and we notice that it is independent of $n$. We turn to the next condition.
\begin{itemize}
    \item[(i)] For any $x$ the partial sum process $(P_N(x))_{N \in \mathbb{N}}$ is tight in the Hilbert space of square integrable functions. 
\end{itemize}
W.l.o.g. we may assume that $x \le \theta_1$and hence reduce the problem to i.i.d. data (the noise terms $\varepsilon_{i,M}$ with $1 \le i \le c_1$ are i.i.d.). If $x> \theta_1$, say $x \in (\theta_1, \theta_2]$ one can divide up the partial sum process $P_N(x) = [P_N(x)-P_N(\theta_1)] + [P_N(\theta_1)]$ and show convergence of both (stochastically independent) terms in the squared brackets separately, which again reduces the problem to studying sums of i.i.d. random variables.\footnote{There is a minor technicality that we neglect here. Due to linear interpolation at the boundary, $[P_N(x)-P_N(\theta_1)]$ and $[P_N(\theta_1)]$ are only independent if $\theta_1 N $ is a natural number. If this is not the case, $[P_N(x)-P_N(\theta_1)]$ and $[P_N(\theta_1)]$ are independent after removing a negligible term.} So, let us fix $0 <x \le \theta_1$. Then we have 
\begin{align} \label{e:PNSx}
    P_N(x) = & N^{-1/2} \sum_{i=1}^{\lfloor x N \rfloor} \varepsilon_{i,M}+N^{-1/2} \big(Nx-\lfloor N x \rfloor\big) \varepsilon_{\lfloor N x \rfloor+1,M}\\
    =  &\bigg(\frac{\lfloor x N \rfloor^{1/2}}{N^{1/2}}\bigg) \bigg\{\lfloor x N \rfloor^{-1/2} \sum_{i=1}^{\lfloor x N \rfloor} \varepsilon_{i,M}\bigg\}+o_P(1). \nonumber
\end{align}
The first equality is the definition of $P_N$. The remainder in the second one follows because by the previous part of this proof
\[
\max_{1 \le n \le N} \frac{\|\varepsilon_{n,M}\|}{\sqrt{N}} =o_P(1).
\]
It will now suffice to show convergence of the part in the curly brackets
\[
S_x:= \lfloor x N \rfloor^{-1/2} \sum_{i=1}^{\lfloor x N \rfloor} \varepsilon_{i,M}
\]
and we can apply a central limit theorem from \cite{samur:1984}, Corollary 4.6. The conditions of this corollary are easily checked -  it is a CLT for triangular arrays of mixing random variables with stationary rows. Our data are stationary and mixing (because they are even i.i.d.). Assumptions (1)-(2) of the corollary follow because our noise terms $\varepsilon_{n,M}$ are centered and have uniformly bounded fourth moments (by the first part of this proof). The same is true for the first statement of condition (3), and we only need to show that the asymptotic covariance operator exists. Here
\[
\mathbb{E}\langle S_x, f\rangle \langle S_x,g\rangle = \int \int f(s) g(t) \mathbb{E}[\varepsilon_{1,M}(s)\varepsilon_{1,M}(t)] w(s) w(t) ds dt.
\]
Recalling the definition of $\varepsilon_{n,M}$ in \eqref{e:errorF} we obtain that 
\begin{align} \label{e:conunif}
\mathbb{E}[\varepsilon_{1,M}(s)\varepsilon_{1,M}(t)] & = F^{(1)}_N(\min(s,t))-F_N^{(1)}(s)F_N^{(1)}(t) \\
& \to F^{(1,*)}(\min(s,t))-F^{(1,*)}(s)F^{(1,*)}(t)=:c^*(s,t),\nonumber
\end{align}
where the convergence is uniform and follows from Assumption \ref{ass_3}. 
Now, plugging in the definition of $c^*(s,t)$, we get
\begin{align*}
 & \big|\mathbb{E}\langle S_x, f\rangle \langle S_x,g\rangle - C^\varepsilon(x,x)[f,g]\big|\\
= & \bigg|\int \int \big(f(s) g(t) \big)\Big(\mathbb{E}[\varepsilon_{1,M}(s)\varepsilon_{1,M}(t)] -c^*(s,t)\Big)  w(s) w(t) ds dt \bigg|.
\end{align*}
The right side convergences to $0$ using the uniform convergence in eq. \eqref{e:conunif} and
we thus get
\begin{align} \label{e:covSx}
\mathbb{E}\langle S_x, f\rangle \langle S_x,g\rangle \to \int \int f(s) g(t) c^*(s,t) w(s) w(t) ds .  
\end{align}
The final condition (4) of the cited Corollary 4.6 in \cite{samur:1984} requires that 
\begin{align} \label{e:epsmom}
    \lim_{d \to \infty}\sup_N\mathbb{E}\|\varepsilon_{1,M}-\Pi_d[\varepsilon_{1,M}]\|^2=0.
\end{align}
Here, the supremum over $N$ refers to the fact that $\varepsilon_{1,M}$ has a distribution also implicitly depending on $N$, through the function $F_N^{(1)}$. The object $\Pi_d$ is a projection on a $d$-dimensional subspace of $L^2(\mathbb{R})$ that we may choose.
If $M$ is fixed, the proof is easier and we focus on the case where $M=M_N \to \infty$. 
The key step is proving weak convergence of $\varepsilon_{1,M}$ to $B(F^{(1,*)}(\cdot))$, where $B$ is a Brownian bridge. 
$B(F^{(1,*)}(\cdot))$ is an element of $L^2(\mathbb{R})$ as we have defined it (because it has continuous, bounded sample paths). 
The convergence now can be proved in different ways, but it follows readily by a KMT approximation. Here, consider an appropriate probability space and i.i.d. uniformly distributed random variables on the unit interval $U_1, U_2,...$. On this probability space, a version of $\varepsilon_{1,M}$ is given by $\varepsilon_{1,M} = \sqrt{M}[\widehat{F}-F^{(1)}]$ with
\[
\widehat{F} (t):= \frac{1}{M}\sum_{m=1}^M \mathbb{I}\{U_m\le F_N^{(1)}(t)\}.
\]
It can be shown using the KMT approximations (\cite{kmt:1975}; Theorem 4) that we can construct a Brownian bridge $B$ s.t.
\[
\mathbb{P}\Big(\sup_{t \in [0,1]} |\frac{1}{\sqrt{M}}[\sum_{m=1}^M \mathbb{I}\{U_m \le t\}-t] - B(t)|>CM^{-1/4}\Big)\le CM^{-1/4},
\]
for some large enough constant $C$. Substituting  $t$ by $F_N^{(1)}(t)$ in the above equation, we get that 
\[
\sup_t|\varepsilon_{1,M}(t) - B(F_N^{(1)}(t))|=o_P(1).
\]
In turn, using uniform convergence of $F_N^{(1)}$ to $F^{(1,*)}$ we get that
\[
\sup_t|B(F_N^{(1)}(t)) - B(F^{(1,*)}(t))|=o_P(1)
\]
as $N \to \infty$.
For any fixed $d$ and as $N,M_N \to \infty$ we then get w.r.t. the (weaker) $L^2$ topology
\[
\Big(\varepsilon_{1,M}-\Pi_d[\varepsilon_{1,M}] \Big)\overset{\mathbb{P}}{\to} \Big(B(F^{(1,*)})-\Pi_d[B(F^{(1,*)}]\Big).
\]
Since all moments of $\varepsilon_{1,M}$ are bounded, it follows by the above convergence that also as $N, M_N \to \infty$ 
\[
\mathbb{E} \Big[ \Big(\varepsilon_{1,M}-\Pi_d[\varepsilon_{1,M}] \Big)^2 \Big]  \to \mathbb{E} \Big [ \Big(B(F^{(1,*)})-\Pi_d[B(F^{(1,*)})]\Big) \Big ] ^2.
\]
 The right side is not dependent on $N$ anymore, and choosing a sequence of increasing vector spaces $V_d$ of dimension $d$ s.t. $\bigcup_d V_d$ is dense in $L^2(\mathbb{R})$ and defining $\Pi_d$ as the projection on $V_d$ implies that
 \[
 \lim_{d \to \infty}\sup_N\mathbb{E}\Big [ \|\varepsilon_{1,M}-\Pi_d[\varepsilon_{1,M}]\|^2\Big ] =0.
 \]
We have shown all assumptions of Corollary 4.6 in \cite{samur:1984} which then implies that the process $ \{ S_x \}_x$ converges to a centered Gaussian $G$ in $L^2(\mathbb{R})$, characterized by the covariance in eq. \eqref{e:covSx}. Recalling \eqref{e:PNSx} we obtain that $P_N(x) $
converges to the Gaussian law characterized by 
 the covariance function $C^\varepsilon$ is defined in Theorem \ref{theo3}. We turn to the last condition    in \cite{rackauskas:suquet:2009} and therefore  define the quantities
    \begin{align}
        t_k & = t_{j,k} = k 2^{-j}, \quad 0 \leq k \leq 2^j, ~j\geq 1 
        \end{align}
    In the following denote by $\log$ the logarithm with basis $2$.
\begin{itemize}
    \item[(iii)] It holds that for any $a>0$
    \[
    \lim_{J \to \infty} \limsup_N \mathbb{P}\Big(\max_{J \le j \le \log(N)} \max_k\|S^{\lfloor 
 Nt_{k+1} \rfloor}_{\lfloor Nt_k\rfloor+1}\|> aN^{1/2}\rho(2^{-j})\Big) =0.
    \]
\end{itemize}
We slightly facilitate the proof by taking the maximum only over such $k$ where $ \lfloor Nt_{k+1}\rfloor \le c_1$. 
Using the definition of the logarithmic weight function $\rho(x) = x^{1/2}\log^{\beta}(x^{-1})$ we have
\begin{align*}
    & \mathbb{P}\Big(\max_{J \le j \le \log(N)} \max_k\|S^{\lfloor 
 Nt_{k+1} \rfloor}_{\lfloor Nt_k\rfloor+1}\|/(N^{1/2}2^{-j/2})> a \log^{\beta}(2^{j})\Big) \\
    \le & \sum_{j \ge J} \sum_{k=0}^{2^j} \mathbb{P}\Big(\|S^{\lfloor 
 Nt_{k+1} \rfloor}_{\lfloor Nt_k\rfloor+1}\|/\sqrt{\lfloor 
 Nt_{k+1} \rfloor -\lfloor Nt_k\rfloor}> C_{S,1} a \log^{\beta}(2^{j})\Big)\\
    \le & \sum_{j \ge J} \sum_{k=0}^{2^j} \exp(-C_{S,2} \log^{2\beta}(2^j)) \le  \sum_{j \ge J} (2^j+1) \exp(-C_{S,2}  \log^{2\beta}(2^j)), 
\end{align*}
where   $C_{S,1}$ and $C_{S,2}$ are positive and sufficiently large constants  and do not  depend on $N$.
We explain the second inequality of the above derivations below. First, we appreciate that, if the above bound holds, it follows that the series on right (which does not depend on $N$) is converging because $2\beta>1$. Hence, the whole object converges  to $0$ if we let $J \to \infty$.\\
Now, to justify the second inequality. Let $L$ be a generic natural number and we may consider  $S_1^{L}$ due to stationarity of the data (before the first change). We then rewrite 
\begin{align*}
    S_1^{L}/\sqrt{L} = \frac{\sqrt{M}}{\sqrt{L}}\sum_{n=1}^{L} (\widehat{ F}_{n,M}-F_n) = \sqrt{ML}(\widetilde{F}_n-F^{(1)}).
\end{align*}
Here, we have used that $F_n = F^{(1)}_N$ (since we are before the first change) and we have defined $\widetilde{F}_n := \frac{1}{L} \sum_{k=1}^L \widehat{F}_{k,M}$. $\widetilde{F}_n$ is again an empirical cdf (now over $ML$ data points) and we get square-exponential concentration around $F^{(1)}_N$ as in the first part of this proof.
\small\renewcommand{\baselinestretch}{0.95}
\putbib
\end{bibunit}
\end{document}